\documentclass[12pt]{article}%
\usepackage{url}

\usepackage{color}
\usepackage{amssymb}
\usepackage{amsfonts}
\usepackage{amsmath}
\usepackage[nohead]{geometry}
\usepackage[singlespacing]{setspace}
\usepackage[bottom]{footmisc}
\usepackage{indentfirst}
\usepackage{endnotes}
\usepackage{graphicx}%
\usepackage{rotating}
\usepackage{verbatim}
\usepackage{setspace}
\setcounter{MaxMatrixCols}{30}
\usepackage{multirow}
\usepackage{latexsym}
\usepackage{bbm}
\usepackage{enumitem}
\usepackage{hhline}
\RequirePackage[colorlinks,citecolor=blue,urlcolor=blue,linkcolor=blue]{hyperref}

\usepackage{graphicx}
\usepackage{caption}
\usepackage{subcaption}

\usepackage{amsthm}
\usepackage{makeidx}
\usepackage{fancyhdr}
\usepackage{type1cm}

 \usepackage[authoryear]{natbib}

\newcommand{\diag}{\mathrm{diag}}
\newcommand{\argmin}{\mathrm{argmin}}

\def \diag {\mbox{diag}}

\def \sgn {\mbox{sgn}}

\newcommand{\beq}{\begin{equation}}
\newcommand{\eeq}{\end{equation}}
\newcommand{\beqn}{\begin{eqnarray}}
\newcommand{\eeqn}{\end{eqnarray}}
\newcommand{\beqnn}{\begin{eqnarray*}}
\newcommand{\eeqnn}{\end{eqnarray*}}

\theoremstyle{plain}
\newtheorem{thm}{Theorem}[section]
\newtheorem{defn}{Definition}[section]
\newtheorem{lem}{Lemma}[section]

\newtheorem{prop}{Proposition}[section]

\theoremstyle{definition}

\include{preamble}

\makeatletter
\def\@biblabel#1{\hspace*{-\labelsep}}
\makeatother
\geometry{left=1in,right=1in,top=1.00in,bottom=1.0in}
\begin{document}

\title{An $\ell_{\infty}$ Eigenvector Perturbation Bound and Its Application to Robust Covariance Estimation}
\author{Jianqing Fan\thanks{Address: Department of ORFE, Sherrerd Hall, Princeton University, Princeton, NJ 08544, USA, e-mail: \textit{jqfan@princeton.edu}, \textit{yiqiaoz@princeton.edu},
\textit{weichenw@princeton.edu}. The research was partially supported by NSF grants
DMS-1206464 and DMS-1406266 and NIH grants R01-GM072611-10.}\;, Weichen Wang and Yiqiao Zhong
\medskip\\{\normalsize Department of Operations Research and Financial Engineering,  Princeton University}
}

\date{}

\maketitle

\sloppy

\onehalfspacing

\begin{abstract}
In statistics and machine learning, we are  interested in the eigenvectors (or singular vectors) of certain matrices (e.g.\ covariance matrices, data matrices, etc). However, those matrices are usually perturbed by noises or statistical errors, either from random sampling or structural patterns. The Davis-Kahan $\sin \theta$ theorem is often used to bound the difference between the eigenvectors of a matrix $A$ and those of a perturbed matrix $\widetilde{A} = A + E$, in terms of $\ell_2$ norm. In this paper, we prove that when $A$ is a low-rank and incoherent matrix, the $\ell_{\infty}$ norm perturbation bound of singular vectors (or eigenvectors in the symmetric case) is smaller by a factor of $\sqrt{d_1}$ or $\sqrt{d_2}$ for left and right vectors, where $d_1$ and $d_2$ are the matrix dimensions. The power of this new perturbation result is shown in robust covariance estimation, particularly when random variables have heavy tails. There, we propose new robust covariance estimators and establish their asymptotic properties using the newly developed perturbation bound. Our theoretical results are verified through extensive numerical experiments.
\end{abstract}

\textbf{Keywords:} Matrix perturbation theory, Incoherence, Low-rank matrices, Sparsity, Approximate factor model.

\pagebreak%
\doublespacing

\onehalfspacing

\section{Introduction}\label{sec::intro}
The perturbation of matrix eigenvectors (or singular vectors) has been well studied in matrix perturbation theory \citep{Wed72, Ste90}. The best known result of eigenvector perturbation is the classic Davis-Kahan theorem \citep{DavKah70}. It originally emerged as a powerful tool in numerical analysis, but soon found its widespread use in other fields, such as statistics and machine learning. Its popularity continues to surge in recent years, which is largely attributed to the omnipresent data analysis, where it is a common practice, for example, to employ PCA \citep{Jol02} for dimension reduction, feature extraction, and data visualization.

The eigenvectors of matrices are closely related to the underlying structure in a variety of problems. For instance, principal components often capture most information of data and extract the latent factors that drive the correlation structure of the data \citep{Kno99}; in classical multidimensional scaling (MDS), the centered squared distance matrix encodes the coordinates of data points embedded in a low dimensional subspace \citep{BorGro05}; and in clustering and network analysis, spectral algorithms are used to reveal clusters and community structure \citep{Ng02, Roh11}. In those problems, the low dimensional structure that we want to recover, is often `perturbed' by observation uncertainty or statistical errors. Besides, there might be a sparse pattern corrupting the low dimensional structure, as in approximate factor models \citep{ChaRot82,StoWat02} and robust PCA \citep{DeBla03,CanLiMaWri11}.

A general way to study these problems is to consider
\begin{equation}\label{eqn::decmp1}
\widetilde{A} = A + S + N,
\end{equation}
where $A$ is a low rank matrix, $S$ is a sparse matrix, and $N$ is a random matrix regarded as random noise or estimation error, all of which have the same size $d_1 \times d_2$. Usually $A$ is regarded as the `signal' matrix we are primarily interested in, $S$ is some sparse contamination whose effect we want to separate from $A$, and $N$ is the noise (or estimation error in covariance matrix estimation).

The decomposition (\ref{eqn::decmp1}) forms the core of a flourishing literature on robust PCA \citep{ChaSanParWil11, CanLiMaWri11}, structured covariance estimation \citep{FanFanLv08,FanLiaMin13}, multivariate regression \citep{YuaEkiLuMon07} and so on. Among these works, a standard condition on $A$ is matrix incoherence \citep{CanLiMaWri11}. Let the singular value decomposition be
\begin{equation}\label{eqn:A}
A = U \Sigma V^T = \sum_{i=1}^r \sigma_i u_i v_i^T,
\end{equation}
where $r$ is the rank of $A$, the singular values are $\sigma_1 \ge \sigma_2 \ge \ldots \ge \sigma_r > 0$, and the matrices $U = [u_1,\ldots, u_r] \in \mathbb{R}^{d_1 \times r}$, $V = [v_1,\ldots, v_r] \in \mathbb{R}^{d_2 \times r}$ consist of the singular vectors. The coherences $\mu(U), \mu(V)$ are defined as
\begin{equation}\label{def::incoherence}
\mu(U) = \frac{d_1}{r} \max_i \sum_{j=1}^r U_{ij}^2, \qquad \mu(V) = \frac{d_2}{r} \max_i \sum_{j=1}^r V_{ij}^2,
\end{equation}
where $U_{ij}$ and $V_{ij}$ are the $(i, j)$ entry of $U$ and $V$, respectively.
It is usually expected that $\mu_0 := \max \{ \mu(U), \mu(V) \}$ is not too large, which means the singular vectors $u_i$ and $v_i$ are incoherent with the standard basis. This incoherence condition (\ref{def::incoherence}) is necessary for us to separate the sparse component $S$ from the low rank component $A$; otherwise $A$ and $S$ are not identifiable. Note that we do not need any incoherence condition on $UV^T$, which is different from \cite{CanLiMaWri11} and is arguably unnecessary \citep{Che15}.

Now we denote the eigengap $\gamma_0= \min\{ \sigma_i - \sigma_{i+1}: i = 1,\ldots,r\}$  where $\sigma_{r+1}:=0$ for notational convenience. Also we let $E = S + N$, and view it as a perturbation matrix to the matrix $A$ in \eqref{eqn::decmp1}. To quantify the perturbation, we define a rescaled measure as $\tau_0 := \max\{ \sqrt{d_2/d_1}\| E \|_{1}, \sqrt{d_1/d_2} \| E \|_{\infty} \}$, where
\begin{equation}\label{def:matone}
\| E \|_{1} = \max_j \sum_{i=1}^{d_1} |E_{ij}|, \quad \| E \|_{\infty} = \max_i \sum_{j=1}^{d_2} |E_{ij}|,
\end{equation}
which are commonly used norms gauging sparsity \citep{BicLev08}. They are also operator norms in suitable spaces (see Section \ref{sec::2}). The rescaled norms $\sqrt{d_2/d_1}\| E \|_{1}$ and $\sqrt{d_1/d_2}\| E \|_{\infty}$ are comparable to the spectral norm $\| E \|_2 := \max_{\| u \|_2 = 1} \| E u \|_2$ in many cases; for example, when $E$ is an all-one matrix, $\sqrt{d_2/d_1}\| E \|_{1} = \sqrt{d_1/d_2} \| E \|_{\infty} = \| E \|_2$.

Suppose the perturbed matrix $\widetilde{A}$ also has the singular value decomposition:
\begin{equation}\label{eqn:Atilde}
\widetilde{A} = \sum_{i=1}^{d_1 \wedge d_2} \widetilde{\sigma}_i \widetilde{u}_i \widetilde{v}_i^T,
\end{equation}
where $\widetilde{\sigma}_i$ are nonnegative and in the decreasing order, and the notation $\wedge$ means $a \wedge b = \min \{a,b\}$. Denote $\widetilde{U} = [\widetilde{u}_1,\ldots, \widetilde{u}_r],V = [\widetilde{v}_1, \ldots, \widetilde{v}_r]$, which are counterparts of top $r$ singular vectors of $A$.

We will present an $\ell_\infty$ matrix perturbation result that bounds $\| \widetilde{u}_i - u_i \|_{\infty}$ and $\| \widetilde{v}_i - v_i \|_{\infty}$ up to sign.\footnote{`Up to sign' means we can appropriately choose an eigenvector or singular vector $u$ to be either $u$ or $-u$ in the bounds. This is becuase eigenvectors and singular vectors are not unique.} This result is different from $\ell_2$ bounds, Frobenius-norm bounds, or the $\sin \Theta$ bounds, as the $\ell_\infty$ norm is not orthogonal invariant. The following theorem is a simplified version of our main results in Section \ref{sec::2}.

\begin{thm} \label{thm::main1}
Let $\widetilde{A} = A + E$ and suppose the singular decomposition in (\ref{eqn:A}) and (\ref{eqn:Atilde}). Denote $\gamma_0= \min\{ \sigma_i - \sigma_{i+1}: i = 1,\ldots,r\}$  where $\sigma_{r+1}:=0$. Then there exists $C(r,\mu_0) = O(r^4 \mu_0^2)$ such that, if $\gamma_0 > C(r,\mu_0) \tau_0$, up to sign,
\begin{equation}\label{ineqn::svd1}
\max_{1\le i \le r} \| \widetilde{u}_i - u_i \|_{\infty} \le C(r,\mu_0) \frac{\tau_0}{\gamma_0 \sqrt{d_1}}  \;\;\; \text{and} \;\;\;
\max_{1\le i \le r} \| \widetilde{v}_i - v_i \|_{\infty} \le C(r,\mu_0) \frac{\tau_0}{\gamma_0 \sqrt{d_2}},
\end{equation}
where $\mu_0 = \max \{ \mu(U), \mu(V) \}$ is the coherence given after \eqref{def::incoherence} and $\tau_0 := \max\{ \sqrt{d_2/d_1}\| E \|_{1}, \sqrt{d_1/d_2} \| E \|_{\infty} \}$.
\end{thm}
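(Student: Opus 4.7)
My approach would be to exploit the SVD equations $\tilde A \tilde v_i = \tilde\sigma_i \tilde u_i$ and $\tilde A^T \tilde u_i = \tilde\sigma_i \tilde v_i$ entrywise, converting the $\ell_2$-type information coming from Wedin's theorem into $\ell_\infty$ bounds via the coherence condition. As a preliminary reduction I would pass to the Hermitian dilation
\[
\mathcal M \;=\; \begin{pmatrix} 0 & A \\ A^T & 0 \end{pmatrix},
\]
whose nonzero eigenpairs are $(\pm \sigma_i,\; \tfrac{1}{\sqrt 2}(u_i^T, \pm v_i^T)^T)$, so that left and right singular vector bounds can be treated in one symmetric framework. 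Since $\|E\|_2 \le \sqrt{\|E\|_1 \|E\|_\infty} \le \tau_0$, Weyl's inequality (giving $|\tilde\sigma_i - \sigma_i| \le \tau_0$) and Wedin's theorem (giving $|u_j^T \tilde u_i| = O(\tau_0/\gamma_0)$ for $j \ne i$ and $|1 - u_i^T\tilde u_i| = O((\tau_0/\gamma_0)^2)$, up to sign) are immediately available.

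The central manipulation is the entrywise eigen-identity $\tilde\sigma_i (\tilde u_i)_k = \sum_{j=1}^r \sigma_j (v_j^T \tilde v_i)\, U_{kj} + (E\tilde v_i)_k$, together with its twin for $(\tilde v_i)_l$. A naive bound on the sum would introduce the ratio $\sigma_1/\gamma_0$, which is not controlled by the hypothesis. I would eliminate it by invoking the cross-identity
\[
\sigma_j (v_j^T \tilde v_i) \;=\; \tilde\sigma_i (u_j^T \tilde u_i) \;-\; u_j^T E \tilde v_i,
\]
obtained from $A\tilde v_i = \tilde\sigma_i \tilde u_i - E\tilde v_i$ by projecting onto $u_j$. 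Substituting back collapses the sum and yields the clean decomposition $(\tilde u_i)_k = (P_U \tilde u_i)_k + \tilde\sigma_i^{-1}(P_{U^\perp} E \tilde v_i)_k$, where $P_U = UU^T$.

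The technical heart---and the step I expect to be the main obstacle---is to show that the perturbed singular vectors inherit incoherence: $\|\tilde u_i\|_\infty \lesssim \sqrt{\mu_0 r/d_1}$ and $\|\tilde v_i\|_\infty \lesssim \sqrt{\mu_0 r/d_2}$. Using the clean decomposition above together with incoherence of $U,V$, the Wedin bounds, and the estimates $\|E\|_\infty \le \tau_0\sqrt{d_2/d_1}$, $\|E\|_1 \le \tau_0\sqrt{d_1/d_2}$, I arrive at a $2\times 2$ coupled self-bounding system of the form
\[
\sqrt{d_1}\,\|\tilde u_i\|_\infty \le C_1(r,\mu_0) + \frac{\tau_0}{\tilde\sigma_i}\sqrt{d_2}\,\|\tilde v_i\|_\infty, \qquad \sqrt{d_2}\,\|\tilde v_i\|_\infty \le C_1(r,\mu_0) + \frac{\tau_0}{\tilde\sigma_i}\sqrt{d_1}\,\|\tilde u_i\|_\infty.
\]
Provided $\tau_0/\tilde\sigma_i$ is bounded away from $1$---which is exactly what the hypothesis $\gamma_0 > C(r,\mu_0)\tau_0$ buys us, since $\tilde\sigma_i \ge \sigma_i - \tau_0 \ge \gamma_0/2$---this system is invertible and produces the claimed incoherence of $\tilde u_i,\tilde v_i$ with a constant polynomial in $r$ and $\mu_0$.

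With the incoherence of $\tilde v_i$ in hand, I would read off $\tilde u_i - u_i = (P_U\tilde u_i - u_i) + \tilde\sigma_i^{-1}P_{U^\perp} E \tilde v_i$ coordinatewise. The first summand is bounded by $O(r(\tau_0/\gamma_0)\sqrt{\mu_0 r/d_1})$ using the Wedin bounds and incoherence of $U$; the second is dominated by $\tilde\sigma_i^{-1}(\|E\|_\infty\|\tilde v_i\|_\infty + \sqrt{\mu_0 r/d_1}\,\|E\|_2) = O(r(\tau_0/\gamma_0)\sqrt{\mu_0/d_1})$ using the incoherence of $\tilde v_i$ just proved. Tracking the accumulated powers of $r$ and $\mu_0$ through the self-bounding step and this final substitution produces the stated constant $C(r,\mu_0)=O(r^4\mu_0^2)$; the corresponding bound on $\tilde v_i - v_i$ follows by the symmetric argument with the roles of $(u,d_1)$ and $(v,d_2)$ exchanged.
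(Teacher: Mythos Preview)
Your approach is correct and genuinely different from the paper's. The paper proves Theorem~1.1 as a corollary of Theorem~2.4, whose proof goes through Stewart's quadratic matrix equation framework: after Hermitian dilation, they seek an orthogonal change of basis $[\overline V,\overline V_\perp]$ that block-diagonalizes $\widetilde A$, which reduces to solving $\overline Q\,\overline L_1 - \overline L_2\,\overline Q = H - \overline Q H^T \overline Q$ for $\overline Q$. A fixed-point iteration (their Lemma~5.2) in the weighted max-norm produces a solution with $\|\overline Q\|_w$ controlled, and a separate Davis--Kahan argument bounds the residual rotation $R$ between $\overline V$ and $\widetilde V$. Your route bypasses all of this machinery: the substitution $\sigma_j(v_j^T\tilde v_i)=\tilde\sigma_i(u_j^T\tilde u_i)-u_j^T E\tilde v_i$ collapses the eigen-identity to the exact decomposition $(\tilde u_i)_k=(P_U\tilde u_i)_k+\tilde\sigma_i^{-1}(P_{U^\perp}E\tilde v_i)_k$, and the coupled $2\times 2$ self-bounding system then delivers incoherence of $\tilde u_i,\tilde v_i$ directly, after which the Wedin bounds on the inner products $u_j^T\tilde u_i$ finish the job.

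What each buys: the paper's quadratic-equation approach is more structural---it produces a bound on the whole subspace $\|\widetilde V R - V\|_{\max}$ first (Theorem~2.1) without any eigengap inside the top $r$, and it extends cleanly to the approximately low-rank case via the $\varepsilon_0$ term. Your entrywise bootstrapping is more elementary and, if you track constants carefully through the Cauchy--Schwarz steps, should in fact yield a constant closer to $O(r^{3/2}\mu_0^{1/2})$ than to $O(r^4\mu_0^2)$; it is also the technique that has since become standard in the entrywise eigenvector literature. The one place to be slightly careful is the invocation of Wedin for \emph{individual} singular vectors: you need Weyl's inequality first to ensure the perturbed $\tilde\sigma_i$ stays in the interval $(\sigma_{i+1},\sigma_{i-1})$, which the hypothesis $\gamma_0 > C\tau_0 \ge C\|E\|_2$ indeed guarantees.
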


When $A$ is symmetric,  the condition on the eigengap is simply $\gamma_0 > C(r,\mu_0) \|E\|_{\infty}$. It naturally holds for a variety of applications, where the low rank structure emerges as a consequence of a few factors driving the data matrix. For example, in Fama-French factor models, the excess returns in a stock market are driven by a few common factors \citep{FamFre93}; in collaborative filtering, the ratings of users are mostly determined by a few common preferences \citep{RenSre05}; in video surveillance, $A$ is associated with the stationary background across image frames \citep{OliRosPen00}. We will have a detailed discussion in Section \ref{sec::which}.

The eigenvector perturbation was studied by \cite{DavKah70}, where Hermitian matrices were considered, and the results were extended by \cite{Wed72} to general rectangular matrices. To compare our result with these classical results, assuming $\gamma_0 \ge 2 \| E \|_2$, a combination of Wedin's theorem and Mirsky's inequality \citep{Mir60} (the counterpart of Weyl's inequality for singular values) implies

\begin{equation}\label{ineqn::davisKahan2}
\max_{1\le k \le r} \big\{  \| v_k -  \widetilde{v}_k \|_{2} \vee \| u_k - \widetilde{u}_k \|_{2} \big\} \le \frac{2\sqrt{2} \| E \|_2}{\gamma_0}.
\end{equation}
where $a \vee b := \max\{a, b\}$.

\cite{YuWanSam15} also proved a similar bound as in (\ref{ineqn::davisKahan2}), and that result is more convenient to use. If we are interested in the $\ell_\infty$ bound but naively use the trivial inequality $\| x \|_{\infty} \le \| x \|_{2}$, we would have a suboptimal bound $O(\| E \|_2/\gamma_0)$ in many situations, especially in cases where $\| E \|_2$ is comparable to $\| E\|_{\infty}$. Compared with (\ref{ineqn::svd1}), the bound is worse by a factor of $\sqrt{d_1}$ for $u_k$ and $\sqrt{d_2}$ for $v_k$. In other words, converting the $\ell_2$ bound from Davis-Kahan theorem directly to the $\ell_{\infty}$ bound does not give a sharp result in general, in the presence of incoherent and low rank structure of $A$.  Actually, assuming $\|E \|_2$ is comparable with $\| E \|_{\infty}$, for square matrices, our $\ell_{\infty}$ bound (\ref{ineqn::svd1}) matches the $\ell_2$ bound (\ref{ineqn::davisKahan2}) in terms of dimensions $d_1$ and $d_2$. This is because $\| x \|_2 \le \sqrt{n}\, \|x \|_{\infty}$ for any $x \in \mathbb{R}^n$, so we expect to gain a factor $\sqrt{d_1}$ or $\sqrt{d_2}$ in those $\ell_\infty$ bounds. The intuition is that, when $A$ has an incoherent and low-rank structure,  the perturbation of singular vectors is not concentrated on a few coordinates.

To understand how matrix incoherence helps, let us consider a simple example with no matrix incoherence, in which (\ref{ineqn::davisKahan2}) is tight up to a constant. Let $A = d(1,0,\ldots,0)^T(1,0,\ldots,0)$ be a $d$-dimensional square matrix, and $E = d(0, 1/2, 0, \ldots, 0)^T (1,0,\ldots,0)$ of the same size. It is apparent that $\gamma_0 = d, \tau_0 = d/2$, and that $v_1 = (1,0,\ldots,0)^T, \widetilde{v}_1 = (2/\sqrt{5}, 1/\sqrt{5}, 0, \ldots, 0)^T$ up to sign. Clearly, the perturbation $\| \widetilde{v}_1 - v_1 \|_{\infty}$ is not vanishing as $d$ tends to infinity in this example, and thus, there is no hope of a strong upper bound as in (\ref{ineqn::svd1}) without the incoherence condition.

The reason that the factor $\sqrt{d_1}$ or $\sqrt{d_2}$ comes into play in (\ref{ineqn::davisKahan2}) is that, the error $u_k -  \widetilde{u}_k$ (and similarly for $v_k$) spreads out evenly in $d_1$ (or $d_2$) coordinates, so that the $\ell_{\infty}$ error is far smaller than the $\ell_{2}$ error. This, of course, hinges on the incoherence condition, which in essence precludes eigenvectors from aligning with any coordinate.

Our result is very different from the sparse PCA literature, in which it is usually assumed that the leading eigenvectors are sparse. In \cite{JohLu09}, it is proved that there is a threshold for $p/n$ (the ratio between the dimension and the sample size), above which PCA performs poorly, in the sense that $\langle \widetilde{v}_1, v_1 \rangle$ is approximately $0$. This means that the principal component computed from the sample covariance matrix reveals nothing about the true eigenvector. In order to mitigate this issue, in \cite{JohLu09} and subsequent papers \citep{VuLei12,Ma13,BerRig13}, sparse leading eigenvectors are assumed. However, our result is different, in the sense that we require a stronger eigengap condition $\gamma_0 > C(r,\mu_0) \|E \|_{\infty}$ (i.e.\ stronger signal), whereas in \cite{JohLu09}, the eigengap of the leading eigenvectors is a constant times $\| E\|_2$. This explains why it is plausible to have a strong uniform eigenvector perturbation bound in this paper.

We will illustrate the power of this perturbation result using robust covariance estimation as one application. In the approximate factor model, the true covariance matrix admits a decomposition into a low rank part $A$ and a sparse part $S$. Such models have been widely applied in finance, economics, genomics, and health to explore correlation structure.

However, in many studies, especially financial and genomics applications, it is well known that the observations exhibit heavy tails \citep{GupVarBod13}. This problem can be resolved with the aid of recent results of concentration bounds in robust estimation \citep{Cat12, HsuSab14, FanLiWan16}, which produces the estimation error $N$ in (\ref{eqn::decmp1}) with an optimal entry-wise bound. It nicely fits our perturbation result, and we can tackle it easily by following the ideas in \cite{FanLiaMin13}.

Here are a few notations in this paper. For a generic $d_1$ by $d_2$ matrix, the matrix max-norm is denoted as $\| M \|_{\max} = \max_{i,j}|M_{ij}|$. The matrix operator norm induced by vector $\ell_{p}$ norm is $\| M \|_p = \sup_{\| x \|_p = 1} \| M x \|_p$ for $1\le p \le \infty$. In particular, $\| M \|_1 = \max_j \sum_{i=1}^{d_1} |M_{ij}|$; $\| M \|_{\infty} = \max_i \sum_{j=1}^{d_2} |M_{ij}|$; and $\| \cdot \|$ denotes the spectral norm, or the matrix $2$-norm $\| \cdot \|_2$ for simplicity. We use $\sigma_j(M)$ to denote the $j^{th}$ largest singular value. For a symmetric matrix $M$, denote $\lambda_j(M)$ as its $j^{th}$ largest eigenvalue. If $M$ is a positive definite matrix, then $M^{1/2}$ is the square root of $M$, and $M^{-1/2}$ is the square root of $M^{-1}$.

\section{The $\ell_\infty$ perturbation result}\label{sec::2}

\subsection{Symmetric matrices}\label{sec::sym}

First, we study $\ell_\infty$ perturbation for symmetric matrices (so $d_1 = d_2$). The approach we study symmetric matrices will be useful to analyze asymmetric matrices, because we can always augment a $d_1 \times d_2$ rectangular matrix into a $(d_1+d_2) \times (d_1+d_2)$ symmetric matrix, and transfer the study of singular vectors to the eigenvectors of the augmented matrix. This augmentation is called \textit{Hermitian dilation}. \citep{Tro12, Pau02} 

Suppose that $A \in \mathbb{R}^{d \times d}$ is an $d$-dimensional symmetric matrix. The perturbation matrix $E\in \mathbb{R}^{d \times d}$ is also $d$-dimensional and symmetric. Let the perturbed matrix be $\widetilde{A} := A + E$. Suppose the spectral decomposition of $A$ is given by
\begin{equation}\label{eq:specA}
A  =  [V, V_\bot] \left( \begin{array}{cc}
\Lambda_1 & 0 \\ 0 & \Lambda_2 \end{array} \right) [V,V_\bot]^T = \sum_{i=1}^r \lambda_i v_i v_i^T + \sum_{i>r}  \lambda_i v_i v_i^T,
\end{equation}
where $\Lambda_1 = \text{diag}\{ \lambda_1,\ldots,\lambda_r \}$, $\Lambda_2 = \diag\{ \lambda_{r+1},\ldots, \lambda_n \}$, and where $|\lambda_1| \ge  |\lambda_2| \ge \ldots \ge | \lambda_n |$. Note the best rank-$r$ approximation of $A$ under the Frobenius norm is $A_r := \sum_{i \le r} \lambda_i v_i v_i^T$.\footnote{This is a consequence of Wielandt-Hoffman theorem.}
Analogously, the spectral decomposition of $\widetilde{A}$ is
\begin{equation*}
\widetilde{A}  = \sum_{i=1}^r \widetilde{\lambda}_i \widetilde{v}_i \widetilde{v}_i^T + \sum_{i>r}  \widetilde{\lambda}_i \widetilde{v}_i \widetilde{v}_i^T,
\end{equation*}
and write $\widetilde{V} = [\widetilde{v}_1, \ldots, \widetilde{v}_r] \in \mathbb{R}^{d \times r}$, where $|\widetilde{\lambda}_1| \ge  |\widetilde{\lambda}_2| \ge \ldots \ge | \widetilde \lambda_n |$.
Recall that $\| E \|_{\infty}$ given by (\ref{def:matone}) is an operator norm in the $\ell_\infty$ space, in the sense that $\| E \|_{\infty} = \sup_{\| u \|_\infty \le 1} \| E u \|_{\infty}$. This norm is the natural counterpart of the spectral norm $\| E \|_2 := \sup_{\| u \|_2 \le 1} \| E u \|_{2}$.

We will use notations $O(\cdot)$ and $\Omega(\cdot)$ to hide absolute constants.\footnote{We write $a = O(b)$ if there is a constant $C>0$ such that $a < Cb$; and $a = \Omega(b)$ if there is a constant $C'>0$ such that $a > C' b$.} The next theorem bounds the perturbation of eigenspaces up to a rotation.

\begin{thm}\label{thm::symbulk}
Suppose $| \lambda_r |  - \varepsilon = \Omega(r^3 \mu^2 \| E \|_{\infty} )$, where  $\varepsilon = \| A -  A_r \|_{\infty}$, which is the approximation error measured under the matrix $\infty$-norm and $\mu = \mu(V)$ is the coherence of $V$ defined in (\ref{def::incoherence}). Then, there exists an orthogonal matrix $R \in \mathbb{R}^{r \times r}$ such that
\begin{equation*}
\| \widetilde{V} R - V \|_{\max} = O\left( \frac{r^{5/2}\mu^2 \| E \|_{\infty} }{(| \lambda_r |  - \varepsilon) \sqrt{d}} \right).
\end{equation*}
\end{thm}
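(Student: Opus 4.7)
My plan is to start from the eigen-equation $\widetilde{A}\widetilde{V} = \widetilde{V}\widetilde{\Lambda}_1$, rewritten as $\widetilde{V} = \widetilde{A}\widetilde{V}\widetilde{\Lambda}_1^{-1}$, and substitute the splitting $\widetilde{A} = V\Lambda_1 V^T + (A - A_r) + E$ to obtain
\begin{equation*}
\widetilde{V} \;=\; V H \;+\; (A - A_r)\widetilde{V}\widetilde{\Lambda}_1^{-1} \;+\; E\widetilde{V}\widetilde{\Lambda}_1^{-1},\qquad H := \Lambda_1 V^T\widetilde{V}\widetilde{\Lambda}_1^{-1}.
\end{equation*}
For $R$ I would take the orthogonal polar factor of $V^T\widetilde{V}$, i.e.\ the nearest orthogonal matrix; this choice makes $H$ close to $R$ whenever $V$ and $\widetilde{V}$ span nearly the same subspace, so the target becomes bounding $\|\widetilde{V} - VR\|_{\max}$.

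The decisive algebraic observation is that $(A - A_r)V = V_\bot\Lambda_2 V_\bot^T V = 0$, which allows me to rewrite
\begin{equation*}
(A - A_r)\widetilde{V} \;=\; (A - A_r)(\widetilde{V} - VR), \qquad E\widetilde{V} \;=\; EVR + E(\widetilde{V} - VR),
\end{equation*}
converting the stubborn perturbation terms into terms involving the very quantity $\widetilde{V} - VR$ I want to bound. Subtracting $VR$ from both sides of the main identity, taking max-norms, and invoking $\|MX\|_{\max} \le \|M\|_\infty\|X\|_{\max}$ together with $\|X\widetilde{\Lambda}_1^{-1}\|_{\max} \le \|X\|_{\max}/|\widetilde{\lambda}_r|$, I can absorb all the $\|\widetilde{V} - VR\|_{\max}$ contributions on the left, yielding the self-bounding inequality
\begin{equation*}
\Bigl(1 - \frac{\varepsilon + \|E\|_\infty}{|\widetilde{\lambda}_r|}\Bigr)\|\widetilde{V} - VR\|_{\max} \;\lesssim\; \|V(H - R)\|_{\max} + \frac{\|EVR\|_{\max}}{|\widetilde{\lambda}_r|}.
\end{equation*}
Weyl's inequality combined with $\|E\|_2 \le \|E\|_\infty$ (from symmetry) gives $|\widetilde{\lambda}_r| \ge |\lambda_r| - \|E\|_\infty$, and the hypothesis $|\lambda_r|-\varepsilon = \Omega(r^3\mu^2\|E\|_\infty)$ forces the parenthetical factor to be bounded below by a positive constant. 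Crucially, $\varepsilon$ enters only through the effective denominator $|\lambda_r|-\varepsilon$, never in the numerator---this is precisely what the self-bounding trick buys.

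To finish, I would bound the two remaining right-hand side terms via incoherence. Because $\max_i \|V_{i\cdot}\|_2 \le \sqrt{\mu r/d}$ and $R$ has unit-norm columns, Cauchy--Schwarz gives $\|VR\|_{\max} \le \sqrt{\mu r/d}$, and hence $\|EVR\|_{\max} \le \|E\|_\infty\sqrt{\mu r/d}$. Likewise $\|V(H-R)\|_{\max} \le \sqrt{\mu r/d}\,\|H-R\|_2$, and $\|H-R\|_2$ is controlled by combining two ingredients: (i) a Davis--Kahan bound $\|\sin\Theta(V,\widetilde{V})\|_2 \lesssim (\|E\|_\infty + \varepsilon)/(|\lambda_r|-\varepsilon)$, which shows $V^T\widetilde{V}$ is close to its polar factor $R$ (to second order in the angles); and (ii) the Sylvester identity $\Lambda_1(V^T\widetilde{V}) - (V^T\widetilde{V})\widetilde{\Lambda}_1 = -V^T E\widetilde{V}$, obtained by multiplying the eigen-equation by $V^T$ on the left, which quantifies how the conjugation by $\Lambda_1$ and $\widetilde{\Lambda}_1^{-1}$ distorts $V^T\widetilde{V}$ into $H$. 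Aggregating these estimates then produces the claimed $r^{5/2}\mu^2\|E\|_\infty/((|\lambda_r|-\varepsilon)\sqrt{d})$ bound.

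The main obstacle is the precise bound on $\|H - R\|_2$: the conjugation by the possibly ill-conditioned diagonals $\Lambda_1$ and $\widetilde{\Lambda}_1^{-1}$ threatens to introduce the condition number $|\lambda_1|/|\lambda_r|$, which the theorem does not tolerate. Avoiding this requires using the Sylvester identity to substitute out the ``bad'' part of $H-R$ in favor of terms depending only on $V^T E\widetilde{V}$ and on the (second-order) closeness of $V^T\widetilde{V}$ to its polar factor. This careful bookkeeping is what accounts for the $r^3\mu^2$ in the hypothesis and the $r^{5/2}\mu^2$ in the conclusion: intuitively, each rotation inside an $r$-dimensional eigenspace costs a factor of $r$, and each transfer of a bound from the spectral norm to an entrywise one via incoherence costs a factor of $\mu$.
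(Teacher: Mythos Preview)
Your approach is genuinely different from the paper's. The paper never writes $\widetilde V$ as $\widetilde A\widetilde V\widetilde\Lambda_1^{-1}$ or sets up a self-bounding inequality; instead it parametrizes the perturbed eigenspace as $\overline V=(V+V_\bot Q)(I_r+Q^TQ)^{-1/2}$ and shows that $\overline Q:=V_\bot Q$ solves the quadratic equation $\overline Q\,\overline L_1-\overline L_2\,\overline Q=H-\overline Q H^T\overline Q$, which it analyzes by a fixed-point iteration in the max-norm (Lemmas~5.1--5.3). The bound on $\|\overline Q\|_{\max}$ then feeds directly into $\|\overline V-V\|_{\max}$, and a separate Davis--Kahan argument (Lemma~5.3) identifies $\mathrm{span}(\overline V)$ with $\mathrm{span}(\widetilde V)$. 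Your route---self-bounding plus a Sylvester identity---is closer in spirit to recent ``leave-one-out''/bootstrap analyses, and in the exact low-rank case ($\varepsilon=0$) it is a reasonable alternative.

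There is, however, a real gap in the approximate low-rank case. Your claim that the parenthetical factor $1-(\varepsilon+\|E\|_\infty)/|\widetilde\lambda_r|$ is bounded below by a positive constant is false under the stated hypothesis. That hypothesis controls $|\lambda_r|-\varepsilon$ relative to $\|E\|_\infty$, but places no bound on the ratio $\varepsilon/|\lambda_r|$: one may have, say, $|\lambda_r|=10^6$, $\varepsilon=10^6-1$, $\|E\|_\infty=10^{-3}$, $r,\mu=O(1)$, and your factor is then $\approx (|\lambda_r|-\varepsilon)/|\lambda_r|=10^{-6}$. After inverting, the blow-up $|\widetilde\lambda_r|/(|\lambda_r|-\varepsilon)$ multiplies $\|V(H-R)\|_{\max}$, and this term has no compensating $1/|\widetilde\lambda_r|$. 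Concretely, your Sylvester identity gives $H-R=(V^T\widetilde V-R)-V^TE\widetilde V\widetilde\Lambda_1^{-1}$; the second piece carries $1/|\widetilde\lambda_r|$ and is harmless, but $\|V^T\widetilde V-R\|_2\le\|\sin\Theta\|_2^2$ only yields $|\widetilde\lambda_r|\|\sin\Theta\|_2^2\le |\lambda_r|\|E\|_\infty^2/(|\lambda_r|-\varepsilon)^2$, which in the example above is of order $1$, not $O(\|E\|_\infty)$. The paper's fixed-point argument sidesteps this because the operator $T(\overline Q)=\overline Q\,\overline L_1-\overline L_2\,\overline Q$ is inverted directly in max-norm with $\|T^{-1}\|_{\max}^{-1}\ge |\lambda_r|-\varepsilon-O(r\mu(\tau+r\kappa))$, so only the difference $|\lambda_r|-\varepsilon$ ever appears, never $|\lambda_r|$ alone. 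To repair your route in full generality you would need either an additional assumption bounding $\varepsilon/|\lambda_r|$ away from $1$, or a different way to handle $V(H-R)$ that avoids paying $|\widetilde\lambda_r|$ upfront.
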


This result involves an unspecified rotation $R$, due to the possible presence of multiplicity of eigenvalues. In the case where $\lambda_1=\cdots=\lambda_r > 0$, the individual eigenvectors of $V$ are only identifiable up to rotation. However, assuming an eigengap (similar to Davis-Kahan theorem), we are able to bound the perturbation of individual eigenvectors (up to sign).

\begin{thm}\label{thm::symindiv}
Assume the conditions in Theorem \ref{thm::symbulk}. In addition, suppose $\delta$ satisfies $\delta > \| E \|_2$,  and for any $i \in [r]$, the interval $[\lambda_i - \delta, \lambda_i + \delta] $ does not contain any eigenvalues of $A$ other than $\lambda_i$. Then, up to sign,
\begin{equation*}
\max_{i \in [r]} \| \widetilde{v}_i - v_i \|_\infty = \| \widetilde{V} - V \|_{\max} =  O\left( \frac{r^4 \mu^2 \| E \|_{\infty}}{(| \lambda_r |  - \varepsilon) \sqrt{d}} + \frac{r^{3/2}\mu^{1/2} \| E \|_2}{\delta \sqrt{d}} \right).
\end{equation*}
\end{thm}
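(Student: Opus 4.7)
The plan is to combine Theorem~\ref{thm::symbulk} (an $\ell_\infty$ bound valid only up to an unknown orthogonal rotation $R$) with the classical Davis-Kahan theorem applied to \emph{individual} eigenvectors, in order to pin $R$ down to a signed diagonal matrix. First, I invoke Theorem~\ref{thm::symbulk} to obtain an orthogonal $R \in \mathbb{R}^{r\times r}$ with $\|\widetilde V R - V\|_{\max} = O\bigl(r^{5/2}\mu^2\|E\|_\infty/((|\lambda_r|-\varepsilon)\sqrt d)\bigr)$. Next, because $\delta > \|E\|_2$ and each interval $[\lambda_i-\delta,\lambda_i+\delta]$ isolates $\lambda_i$ from the other eigenvalues of $A$, Weyl's inequality transfers a comparable isolation to $\widetilde\lambda_i$, so the classical Davis-Kahan bound for the one-dimensional eigenspace spanned by $v_i$ yields a sign $s_i \in \{\pm 1\}$ with $\|\widetilde v_i - s_iv_i\|_2 = O(\|E\|_2/\delta)$. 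Collect these into $S = \diag(s_1,\ldots,s_r)$; the goal now reduces to showing that $R$ is close to $S$.

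Using $RR^T = I$, write the clean decomposition
\[
\widetilde v_i - s_i v_i \;=\; s_i\bigl((\widetilde V R)e_i - v_i\bigr) \;+\; \widetilde V R(R^T - S)e_i,
\]
and split the second term further as $V(R^T-S)e_i + (\widetilde V R - V)(R^T-S)e_i$. The first piece is controlled directly in $\ell_\infty$ by $\|\widetilde V R - V\|_{\max}$ from Theorem~\ref{thm::symbulk}. For $V(R^T-S)e_i$, Cauchy-Schwarz combined with the incoherence estimate $\|V_{k,\cdot}\|_2^2 \le r\mu/d$ yields the row-wise bound $|e_k^T V(R^T-S)e_i| \le \sqrt{r\mu/d}\,\|(R-S)^T e_i\|_2 \le \sqrt{r\mu/d}\,\|R-S\|_F$, which is exactly where the $\sqrt d$ denominator re-emerges and mirrors its role in Theorem~\ref{thm::symbulk}. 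The residual $(\widetilde V R - V)(R^T-S)e_i$ contributes only a lower-order correction, bounded by $\sqrt r\,\|\widetilde V R - V\|_{\max}\,\|R-S\|_F$.

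It therefore suffices to bound $\|R-S\|_F$. Since $\widetilde V$ has orthonormal columns,
\[
\|R-S\|_F = \|\widetilde V(R-S)\|_F \le \|\widetilde V R - V\|_F + \|V - \widetilde V S\|_F,
\]
and the first summand is at most $\sqrt{rd}\,\|\widetilde V R - V\|_{\max}$ (by Theorem~\ref{thm::symbulk}), while the second is at most $\sqrt r\,\max_i\|v_i - s_i\widetilde v_i\|_2 = O(\sqrt r\,\|E\|_2/\delta)$ (from the Davis-Kahan step). Substituting these into the decomposition above produces the two additive terms in the claimed bound. The main obstacle is the delicate bookkeeping: one must track how the $\sqrt d$ gain from incoherence interacts with the $\sqrt{rd}$ loss that arises in converting $\max$-norm bounds to Frobenius-norm bounds, and must choose the Davis-Kahan sign consistently as $s_i = \sgn(\widetilde v_i^T v_i)$ so that $\|\widetilde v_i - s_iv_i\|_2 \le \sqrt 2\,\sin\theta_i$ can be tightly combined with the subspace-level estimate. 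There is no ambiguity in choosing individual eigenvectors, since the hypothesis itself forces all $\lambda_i$ to be distinct.
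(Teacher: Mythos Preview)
Your argument is correct and follows the same high-level plan as the paper: combine the subspace bound from Theorem~\ref{thm::symbulk} with a Davis--Kahan bound for individual eigenvectors in order to control the residual rotation. The execution, however, differs. The paper does not treat Theorem~\ref{thm::symbulk} as a black box; instead it returns to the intermediate matrix $\overline{V}$ from the fixed-point construction, writes $\widetilde V - V = \overline V(R-I_r) + (\overline V - V)$ with $R = \overline V^T\widetilde V$, and bounds $\|R - I_r\|_{\max}$ directly by expanding the explicit formula $R = (I_r + \overline Q^T\overline Q)^{-1/2}(V+\overline Q)^T\widetilde V$ and controlling $\|V^T\widetilde V - I_r\|_{\max}$ entrywise via Davis--Kahan. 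You instead control $\|R - S\|_F$ through the clean identity $\|R-S\|_F = \|\widetilde V(R-S)\|_F$ and a triangle inequality, which is more modular and avoids touching $\overline Q$ at all.

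Both routes are valid; the trade-off is in the resulting powers of $r$ and $\mu$. Your Frobenius detour picks up a $\sqrt{rd}$ when converting $\|\widetilde V R - V\|_{\max}$ to $\|\widetilde V R - V\|_F$, which after multiplying by the incoherence factor $\sqrt{r\mu/d}$ yields a leading term of order $r^{7/2}\mu^{5/2}\|E\|_\infty/((|\lambda_r|-\varepsilon)\sqrt d)$ rather than the paper's $r^4\mu^2$; conversely your second term is $r\mu^{1/2}\|E\|_2/(\delta\sqrt d)$, slightly tighter than the paper's $r^{3/2}\mu^{1/2}$. Neither set of exponents dominates the other, and since the paper explicitly does not pursue optimality in $r,\mu$, this discrepancy is immaterial to the substance of the result. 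Your approach has the advantage of being entirely self-contained once Theorem~\ref{thm::symbulk} is granted.
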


To understand the above two theorems, let us consider the case where $A$ has exactly rank $r$ (i.e., $\varepsilon = 0$), and $r$ and $\mu$ are not large (say, bounded by a constant). Theorem \ref{thm::symbulk} gives a uniform entrywise bound $O(\| E \|_{\infty} /| \lambda_r |\sqrt{d})$ on the eigenvector perturbation. As a comparison, the Davis--Kahan $\sin \Theta$ theorem \citep{DavKah70} gives a bound $O(\| E \|_2 /| \lambda_r |)$ on $\| \widetilde{V} R - V \|_2$ with suitably chosen rotation $R$.\footnote{To see how the Davis-Kahan $\sin \Theta$ theorem relates to this form, we can use the identity $ \| \sin \Theta(\widetilde{V}, V)  \|_2 = \| \widetilde{V} \widetilde{V}^T - VV^T \|_2$ \citep{Ste90}, and the (easily verifiable) inequality $2 \min_R \| \widetilde{V}R - V \|_2 \ge  \| \widetilde{V} \widetilde{V}^T - VV^T \|_2 \ge  \min_R \| \widetilde{V}R - V \|_2$ where $R$ is an orthogonal matrix.}  This is an order of $\sqrt{d}$ larger than the bound given in Theorem \ref{thm::symbulk} when $\|E\|_\infty$ is of the same order as $\|E\|_2$.
Thus, 
in scenarios where $ \| E \|_2 $ is comparable to $\| E \|_{\infty}$, this is a refinement of Davis-Kahan theorem, because the max-norm bound in Theorem \ref{thm::symbulk} provides an entry-wise control of perturbation.  Although $ \| E \|_{\infty} \ge \| E \|_{2} $,\footnote{Since $\| E \|_1 \|E \|_{\infty} \le \| E \|_2^2$ \citep{Ste90}, the inequality follows from $\| E \|_1 = \| E \|_{\infty}$ by symmetry.} there are many settings where the two quantities are comparable; for example, if $E$ has a submatrix whose entries are identical and has zero entries otherwise, then $ \| E \|_{\infty} = \| E \|_{2}$.

Theorem \ref{thm::symindiv} provides the perturbation of individual eigenvectors, under a usual eigengap assumption. When $r$ and $\mu$ are not large, we incur an additional term $O(\| E \|_2 / \delta \sqrt{d})$ in the bound. This is understandable, since $\| \widetilde{v}_i - v_i \|_2$ is typically $O(\| E \|_2 / \delta)$.

When the rank of $A$ is not exactly $r$, we require that $| \lambda_r |$ is larger than the approximation error $\| A - A_r \|_{\infty}$. It is important to state that this assumption is more restricted than the eigengap assumption in the Davis-Kahan theorem, since $\| A - A_r \|_{\infty} \ge \| A - A_r \|_{2} = | \lambda_{r+1}|$. However, different from the matrix max-norm, the spectral norm $\| \cdot \|_2$ only depends on the eigenvalues of a matrix, so it is natural to expect $\ell_2$ perturbation bounds that only involve $\lambda_r$ and $\lambda_{r+1}$. It is not clear whether we should expect an $\ell_\infty$ bound that involves $\lambda_{r+1}$ instead of $\varepsilon$. More discussions can be found in Section \ref{sec::org}.

We do not pursue the optimal bound in terms of $r$ and $\mu(V)$ in this paper, as the two quantities are not large in many applications, and the current proof is already complicated.

\subsection{Rectangular matrices}\label{sec::asymm}

Now we establish $\ell_\infty$ perturbation bounds for general rectangular matrices. The results here are more general than those in Section \ref{sec::intro}, and in particular, we allow the matrix $A$ to be of approximate low rank. Suppose that both $A$ and $E$ are $d_1 \times d_2$ matrices, and $\widetilde{A} := A + E$. The rank of $A$ is at most $d_1 \wedge d_2$ (where $a \wedge b = \min \{a, b\}$). Suppose an integer $r$ satisfies $r \le \mathrm{rank}(A)$. Let the singular value decomposition of $A$ be

\begin{equation*}
A  =   \sum_{i=1}^r \sigma_i u_i v_i^T + \sum_{i=r+1}^{d_1 \wedge d_2}  \sigma_i u_i v_i^T,
\end{equation*}
where the singular values are ordered as $\sigma_1 \ge \sigma_2 \ge \ldots \ge \sigma_{d_1 \wedge d_2} \ge 0$, and the unit vectors $u_1, \ldots, u_{d_1 \wedge d_2}$ (or unit vectors $v_1, \ldots, v_{d_1 \wedge d_2}$) are orthogonal to each other. We denote $U = [u_1,\ldots,u_r] \in \mathbb{R}^{d_1 \times r}$ and $V = [v_1,\ldots,v_r] \in \mathbb{R}^{d_2 \times r}$. Analogously, the singular value decomposition of $\widetilde{A}$ is
\begin{equation*}
\widetilde{A}  =   \sum_{i=1}^r \widetilde{\sigma}_i \widetilde{u}_i  \widetilde{v}_i^T + \sum_{i=r+1}^{d_1 \wedge d_2}  \widetilde{\sigma}_i \widetilde{u}_i \widetilde{v}_i^T,
\end{equation*}
where $\widetilde{\sigma}_1 \ge \ldots \ge \widetilde{\sigma}_{d_1 \wedge d_2}$. Similarly, columns of $\widetilde{U} = [\widetilde{u}_1,\ldots,\widetilde{u}_r] \in \mathbb{R}^{d_1 \times r}$ and $V = [\widetilde{v}_1,\ldots,\widetilde{v}_r] \in \mathbb{R}^{d_2 \times r}$ are orthonormal.

Define $\mu_0 = \max \{ \mu(V), \mu(U) \}$, where $\mu(U)$ (resp.\ $\mu(V)$) is the coherence of $U$ (resp.\ $V$). This $\mu_0$ will appear in the statement of our results, as it controls both the structure of left and right singular spaces. When, specially, $A$ is a symmetric matrix, the spectral decomposition of $A$ is also the singular value decomposition (up to sign), and thus $\mu_0$ coincides with $\mu$ defined in Section \ref{sec::sym}.

Recall the definition of matrix $\infty$-norm and $1$-norm of a rectangular matrix (\ref{def:matone}).
Similar to the matrix $\infty$-norm, $\| \cdot \|_{1}$ is an operator norm in the $\ell_1$ space. An obvious relationship between matrix $\infty$-norm and $1$-norm is $\| E \|_{\infty} = \| E^T \|_1$. Note that the matrix $\infty$-norm and $1$-norm have different number of summands in their definitions, so we are motivated to consider $\tau_0 := \max\{ \sqrt{d_1/d_2} \|E\|_{\infty} , \sqrt{d_2/d_1} \| E \|_{1} \}$ to balance the dimensions $d_1$ and $d_2$.

Let $A_r = \sum_{i \le r} \sigma_i u_i v_i^T$ be the best rank-$r$ approximation of $A$ under the Frobenius norm, and let $\varepsilon_0 = \sqrt{d_1/d_2} \|A-A_r\|_{\infty} \vee \sqrt{d_2/d_1} \| A-A_r \|_{1}$, which also balances the two dimensions. Note that in the special case where $A$ is symmetric, this approximation error $\varepsilon_0$ is identical to $\varepsilon$ defined in Section \ref{sec::sym}. The next theorem bounds the perturbation of singular spaces.

\begin{thm}\label{thm::assymbulk}
Suppose that $\delta_0 - \varepsilon_0 = \Omega(r^3 \mu_0^2 \tau_0)$. Then, there exists orthogonal matrices $R_U, R_V \in \mathbb{R}^{r \times r}$ such that,
\begin{align*}
\| \widetilde{U} R_U - U \|_{\max} = O\Big( \frac{r^{5/2}\mu_0^{2} \tau_0}{(\sigma_r - \epsilon_0)\sqrt{d_1} }\Big), \qquad \| \widetilde{V} R_V - V \|_{\max} = O\Big( \frac{r^{5/2}\mu_0^{2} \tau_0}{(\sigma_r - \epsilon_0)\sqrt{d_2} }\Big).
\end{align*}
\end{thm}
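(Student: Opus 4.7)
The natural plan is to reduce to the symmetric setting handled by Theorem~\ref{thm::symbulk} via Hermitian dilation. I would form the $(d_1+d_2) \times (d_1+d_2)$ symmetric matrices
\[
M = \begin{pmatrix} 0 & A \\ A^T & 0 \end{pmatrix}, \qquad \widetilde{M} = \begin{pmatrix} 0 & \widetilde{A} \\ \widetilde{A}^T & 0 \end{pmatrix}, \qquad F = \widetilde{M} - M = \begin{pmatrix} 0 & E \\ E^T & 0 \end{pmatrix}.
\]
The spectrum of $M$ consists of $\pm \sigma_i$ together with zeros, and the associated unit eigenvectors are $\tfrac{1}{\sqrt{2}}\binom{u_i}{\pm v_i}$. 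Collecting the eigenvectors for the top $2r$ eigenvalues (by absolute value) into $W \in \mathbb{R}^{(d_1+d_2) \times 2r}$, and similarly $\widetilde{W}$, a max-norm perturbation bound on $W$ immediately controls the max-norm perturbations of both $U$ and $V$, since the top $d_1$ and bottom $d_2$ rows of $W$ are (up to the factor $1/\sqrt{2}$ and the sign pairing between the two columns associated with each $\sigma_i$) precisely $U$ and $V$. The rotations $R_U, R_V$ can then be read off from the orthogonal matrix produced by Theorem~\ref{thm::symbulk}, exploiting the block structure enforced by the sign pairing of eigenvalues.

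The second step is to translate the hypotheses of Theorem~\ref{thm::symbulk} into the rectangular quantities. The key identifications are $|\lambda_{2r}(M)| = \sigma_r$, so the eigengap-type hypothesis becomes the $\sigma_r - \varepsilon_0$ condition; $\|F\|_\infty = \max\{\|E\|_\infty, \|E\|_1\}$, with the analogous identity for the best rank-$2r$ approximation error of $M$ in $\infty$-norm; and a block decomposition of the coherence of $W$, namely $\sum_j W_{ij}^2 \le r\mu(U)/d_1$ on rows $1 \le i \le d_1$ and $\sum_j W_{ij}^2 \le r\mu(V)/d_2$ on rows $d_1 < i \le d_1+d_2$. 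Feeding these into Theorem~\ref{thm::symbulk} should yield a uniform max-norm bound on $\widetilde{W}R - W$, from which the two claimed bounds follow by restricting to the top and bottom blocks and reassembling the rotation.

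The main obstacle is the dimensional asymmetry. A naive application of Theorem~\ref{thm::symbulk} measures coherence uniformly across all $d_1+d_2$ rows and places a single factor of $\sqrt{d_1+d_2}$ in the denominator, whereas the target bounds have separate factors $\sqrt{d_1}$ and $\sqrt{d_2}$ for the two blocks. This mismatch is precisely what the definition $\tau_0 = \max\{\sqrt{d_1/d_2}\,\|E\|_\infty, \sqrt{d_2/d_1}\,\|E\|_1\}$ is designed to absorb: the rescalings $\sqrt{d_1/d_2}$ and $\sqrt{d_2/d_1}$ compensate for the differing row counts in the two blocks, and similarly for $\varepsilon_0$. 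Realizing this absorption rigorously requires either a weighted variant of the dilation, or a block-wise refinement of the proof of Theorem~\ref{thm::symbulk} that tracks the top and bottom blocks separately (so that each row-index range is normalized by its own dimension rather than by $d_1+d_2$). I expect this asymmetric book-keeping, rather than the dilation trick itself, to be the main technical hurdle; the polynomial dependence on $r$ and $\mu_0$ in the final constant should then follow by a direct transcription of the symmetric argument.
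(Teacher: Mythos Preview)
Your proposal is correct and matches the paper's approach: the paper applies Hermitian dilation and then, exactly as you anticipate, handles the dimensional asymmetry not by invoking Theorem~\ref{thm::symbulk} directly but by re-running its proof with a weighted max-norm $\|M\|_w$ that scales the top $d_1$ rows by $\sqrt{d_1}$ and the bottom $d_2$ rows by $\sqrt{d_2}$ (Lemmas~\ref{lem::Qbar2} and~\ref{lem::match2}). The rescaled quantities $\tau_0$, $\varepsilon_0$, $\mu_0$ emerge precisely from this block-wise bookkeeping, just as you predicted.
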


Similar to Theorem \ref{thm::symindiv}, under an assumption of gaps between singular values, the next theorem bounds the perturbation of individual singular vectors.

\begin{thm}\label{thm::assymindiv}
Suppose the same assumption in Theorem \ref{thm::assymbulk}. In addition, suppose $\delta_0$ satisfies $\delta_0 > \| E \|_2$,  and for any $i \in [r]$, the interval $[\sigma_i - \delta_0, \sigma_i + \delta_0] $ does not contain any eigenvalues of $A$ other than $\sigma_i$. Then, up to sign,
\begin{align}\label{ineqn::assymindiv}
\max_{i \in [r]} \| \widetilde{u}_i - u_i \|_\infty &= O \Big( \frac{r^4\mu_0^2 \tau_0}{(\sigma_r - \varepsilon_0)\sqrt{d_1}} + \frac{r^{3/2}\mu_0^{1/2}\|E\|_2}{\delta_0\sqrt{d_1}}  \Big), \\
\max_{i \in [r]} \| \widetilde{v}_i - v_i \|_\infty &= O \Big( \frac{r^4\mu_0^2 \tau_0}{(\sigma_r - \varepsilon_0)\sqrt{d_2}} + \frac{r^{3/2}\mu_0^{1/2}\|E\|_2}{\delta_0\sqrt{d_2}}  \Big).
\end{align}
\end{thm}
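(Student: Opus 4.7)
The plan is to mirror the proof of Theorem~\ref{thm::symindiv} in the symmetric case: start from the bulk bounds of Theorem~\ref{thm::assymbulk} and convert them into individual singular-vector bounds by arguing, via a Wedin-type $\sin\Theta$ argument, that the orthogonal rotations produced there must be close to a signed identity whenever the singular-value gap $\delta_0$ is large. One could alternatively reduce to Theorem~\ref{thm::symindiv} through Hermitian dilation of $A$ into a $(d_1+d_2)\times(d_1+d_2)$ symmetric matrix, but the rebalancing between $d_1$ and $d_2$ hidden in $\tau_0$ (and in the coherence of the dilated eigenspace) makes the direct bootstrap cleaner.

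First I would invoke Theorem~\ref{thm::assymbulk} to produce orthogonal matrices $R_U,R_V\in\mathbb{R}^{r\times r}$ with $\|\widetilde{U}R_U-U\|_{\max}=O(r^{5/2}\mu_0^2\tau_0/((\sigma_r-\varepsilon_0)\sqrt{d_1}))$ and a symmetric estimate for $\widetilde{V}R_V-V$ with $\sqrt{d_2}$ in the denominator. A useful byproduct is that rows of $\widetilde{U}$ inherit incoherence from $U$: since $\widetilde{U}\approx UR_U^T$ in max-norm, $\max_j\|\widetilde{U}_{j,\cdot}\|_2=O(\sqrt{\mu_0 r/d_1})$, and likewise for $\widetilde{V}$.

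Next, the singular-value gap hypothesis activates Wedin's $\sin\Theta$ theorem for each pair: for suitable signs $s_i\in\{\pm1\}$, $\|\widetilde{u}_i-s_iu_i\|_2=O(\|E\|_2/\delta_0)$ and $\|\widetilde{v}_i-s_iv_i\|_2=O(\|E\|_2/\delta_0)$. Setting $D_U=\diag(s_1,\ldots,s_r)$, the entries of $\widetilde{U}^TU-D_U$ are diagonally of order $\|E\|_2^2/\delta_0^2$ (from $|1-\widetilde{u}_i^Tu_i|\le\tfrac12\|s_i\widetilde{u}_i-u_i\|_2^2$) and off-diagonally of order $\|E\|_2/\delta_0$ (using $u_i\perp u_j$ for $i\ne j$). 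Aggregating through Frobenius and using the identity $R_U=\widetilde{U}^TU+\widetilde{U}^T(\widetilde{U}R_U-U)$ yields $\|R_U-D_U\|_2=O(r\|E\|_2/\delta_0)$, and analogously for $R_V$. Finally, decomposing
\begin{equation*}
s_i\widetilde{u}_i-u_i \;=\; \widetilde{U}(D_U-R_U)e_i \;+\; \bigl[(\widetilde{U}R_U)e_i-u_i\bigr],
\end{equation*}
the bracket contributes at most $\|\widetilde{U}R_U-U\|_{\max}$ in $\ell_\infty$ (yielding the first term in the stated bound, with an additional $\sqrt{r}$ absorbed from column-to-entry conversion), while Cauchy-Schwarz on $\widetilde{U}(D_U-R_U)e_i$ gives
\begin{equation*}
\|\widetilde{U}(D_U-R_U)e_i\|_\infty \;\le\; \max_j\|\widetilde{U}_{j,\cdot}\|_2\cdot\|R_U-D_U\|_2 \;=\; O\!\left(\tfrac{r^{3/2}\mu_0^{1/2}\|E\|_2}{\delta_0\sqrt{d_1}}\right),
\end{equation*}
matching the second term. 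The derivation on the right-singular side is identical with $d_2$ in place of $d_1$.

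The main technical obstacle is the second step: the rotation $R_U$ from Theorem~\ref{thm::assymbulk} is only implicitly defined inside that proof, so it takes real work to show that the gap hypothesis forces it to coincide with the sign matrix $D_U$ of Wedin up to $O(r\|E\|_2/\delta_0)$. The key observation is that any orthogonal $R$ making $\widetilde{U}R$ close to $U$ must satisfy $R\approx\widetilde{U}^TU$, and Wedin's individual-vector theorem then pins $\widetilde{U}^TU$ to $D_U$ up to off-diagonal entries of size $\|E\|_2/\delta_0$. Once this alignment is secured, the remainder is careful bookkeeping of incoherence, Cauchy-Schwarz, and $\ell_2$-to-$\ell_\infty$ conversion on the $r$-dimensional ambient spaces.
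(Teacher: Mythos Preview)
Your approach is correct and genuinely different from the paper's. The paper proves Theorem~\ref{thm::assymindiv} by Hermitian dilation: it works in the $(d_1+d_2)$-dimensional symmetric setting with the weighted max-norm $\|\cdot\|_w$ and re-runs the argument of Theorem~\ref{thm::symindiv} there, using the internal objects $\overline{Q}^d$, $\overline{V}^d$, and the $2r\times 2r$ rotation $R^d$ produced in the proof of Lemma~\ref{lem::Qbar2}. In particular it bounds $\|R^d-I_{2r}\|_{\max}$ by expanding $R^d=(I_{2r}+(\overline{Q}^d)^T\overline{Q}^d)^{-1/2}(V^d+\overline{Q}^d)^T\widetilde{V}^d$ and appealing to Davis--Kahan on the dilated matrices. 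You instead treat Theorem~\ref{thm::assymbulk} as a black box: take its $R_U$ and compare it to the sign matrix $D_U$ coming from Wedin on individual singular vectors, via the identity $R_U=\widetilde{U}^TU+\widetilde{U}^T(\widetilde{U}R_U-U)$. This is more modular (no need to reopen the quadratic-equation machinery or the weighted norm) at the cost of slightly looser constants.

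One point to tighten: your claimed bound $\|R_U-D_U\|_2=O(r\|E\|_2/\delta_0)$ only accounts for the piece $\widetilde{U}^TU-D_U$. The second piece $\widetilde{U}^T(\widetilde{U}R_U-U)$ is controlled only through $\|\widetilde{U}R_U-U\|_2\le\sqrt{rd_1}\,\|\widetilde{U}R_U-U\|_{\max}=O(r^3\mu_0^2\tau_0/(\sigma_r-\varepsilon_0))$, which is not of order $\|E\|_2/\delta_0$. When you feed this through $\max_j\|\widetilde{U}_{j,\cdot}\|_2=O(\sqrt{r\mu_0/d_1})$ it contributes $O(r^{7/2}\mu_0^{5/2}\tau_0/((\sigma_r-\varepsilon_0)\sqrt{d_1}))$, which is absorbed into the first term of the theorem (the paper does not claim optimality in $r,\mu_0$), but you should track it explicitly rather than fold it into the $\|E\|_2/\delta_0$ term.
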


As mentioned in the beginning of this section, we will use dilation to augment all $d_1 \times d_2$ matrices into symmetric ones with size $d_1 + d_2$. In order to balance the possibly different scales of $d_1$ and $d_2$, we consider a weighted max-norm. This idea will be further illustrated in Section \ref{sec::org}.

\subsection{Examples: which matrices have such structure?}\label{sec::which}

In many problems, low-rank structure naturally arises due to the impact of pervasive latent factors that influence most observed data. Since observations are imperfect, the low-rank structure is often `perturbed' by an additional sparse structure, gross errors, measurement noises, or the idiosyncratic components that can not be captured by the latent factors. We give some motivating examples with such structure.

\medskip
\textit{Panel data in stock markets.} Consider the excess returns from a stock market over a period of time. The driving factors in the market are reflected in the covariance matrix as a low rank component $A$. The residual covariance of the idiosyncratic components is often modeled by a sparse component $S$. Statistical analysis including PCA is usually conducted based on the estimated covariance matrix $\widetilde{A} = \widehat\Sigma$, which is perturbed  from the true covariance $\Sigma = A + S$ by the estimation error $N$ \citep{StoWat02,FanLiaMin13}. In Section \ref{sec::covEstFacMol}, we will develop a robust estimation method in the presence of heavy-tailed return data.

\medskip
\textit{Video surveillance.} In image processing and computer vision, it is often desired to separate moving objects from static background before further modeling and analysis \citep{OliRosPen00,HuTanWanMay04}. The static background corresponds to the low rank component $A$ in the data matrix, which is a collection of
video frames, each consisting of many pixels represented as a long vector in the data matrix. Moving objects and noise correspond to the sparse matrix $S$ and noise matrix $N$. Since the background is global information and reflected by many pixels of a frame, it is natural for the incoherence condition to hold.

\medskip
\textit{Wireless sensor network localization.} In wireless sensor networks, we are usually interested in determining the location of sensor nodes with unknown position based on a few (noisy) measurements between neighboring nodes \citep{DohPisSJGha01, BisYe04}. Let $\mathbb X$ be an $r$ by $n$ matrix such that each column $x_i$ gives the coordinates of each node in a plane ($r=2$) or a space ($r=3$). Assume the center of the sensors has been relocated at origin.
Then the low rank matrix $A =\mathbb X^T \mathbb X$,  encoding the true distance information, has to satisfy distance constraints given by the measurements.
The noisy distance matrix $\widetilde A$ after centering, equals to the sum of $A$ and a matrix $N$ consisting of measurement errors.
Suppose that each node is a random point uniformly distributed in a rectangular region. It is not difficult to see that with high probability, the top $r$ eigenvalues of $\mathbb X^T \mathbb X$ and their eigengap scales with the number of sensors $n$ and the leading eigenvectors have a bounded coherence.

In our theorems, we require that the coherence $\mu$ is not too large. This is a natural structural condition associated with the low rank matrices. Consider the following very simple example: if the eigenvectors $v_1,\ldots, v_r$ of the low rank matrix $A$ are uniform unit vectors in a sphere, then with high probability, $\max_i \| v_i \|_{\infty} = O(\sqrt{\log n})$, which implies $\mu = O(\log n)$. An intuitive way to understand the incoherence structure is that no coordinates of $v_1$ (or $v_2, \ldots v_r$) are dominant. In other words, the eigenvectors are not concentrated on a few coordinates.

In all our examples, the incoherence structure is natural. The factor model satisfies such structure, which will be discussed in Section \ref{sec:app}. In the video surveillance example, ideally, when the images are static, $A$ is a rank one matrix $x \mathbf{1}^T$. Since usually a majority of pixels (coordinates of $x$) help to display an image, the vector $x$ often has dense coordinates with comparable magnitude, so $A$ also has an incoherence structure in this example. Similarly, in the sensor localization example, the coordinates of all sensor nodes are comparable in magnitude, so the low rank matrix $A$ formed by $\mathbb X^T \mathbb X$ also has the desired incoherence structure.

\subsection{Other perturbation results}\label{sec::connect}

Although the eigenvector perturbation theory is well studied in numerical analysis, there is a renewed interest among statistics and machine learning communities recently, due to the wide applicability of PCA and other eigenvector-based methods. In \cite{CaiZha16, YuWanSam15}, they obtained variants or improvements of Davis-Kahan theorem (or Wedin's theorem), which are user-friendly in the statistical contexts. These results assume the perturbation is deterministic, which is the same as Davis-Kahan theorem and Wedin's theorem. In general, these results are sharp, even when the perturbation is random, as evidenced by the BBP transition \citep{BaiAroPec05}.

However, these classical results can be suboptimal, when the perturbation is random and the smallest eigenvalue gap $\lambda_1 - \lambda_2$ does not capture particular spectrum structure. For example, \cite{Vu11, ORoVuKe13} showed that with high probability, there are bounds sharper than the Wedin's theorem, when the signal matrix is low-rank and satisfies certain eigenvalue conditions.

In this paper, our perturbation results are deterministic, thus the bound can be suboptimal when the perturbation is random with certain structure (e.g. the difference between sample covariance and population one for i.i.d. samples). However, the advantage of a deterministic result is that it is applicable to any random perturbation.  This is especially useful when we cannot make strong random assumptions on the perturbation (e.g., the perturbation is an unknown sparse matrix). In Section \ref{sec:app}, we will see examples of this type.

\section{Application to robust covariance estimation} \label{sec:app}

We will study the problem of robust estimation of covariance matrices and show the strength of our perturbation result. Throughout this section, we assume both rank $r$ and the coherence $\mu(V)$ are bounded by a constant, though this assumption can be relaxed. We will use $C$ to represent a generic constant, and its value may change from line to line.

\subsection{PCA in spiked covariance model} \label{sec::covEstFacMol}

To initiate our discussions, we first consider sub-Gaussian random variables. Let $X = (X_1, \dots, X_d)$ be
a random $d$-dimensional vector with mean zero and covariance matrix
\begin{equation}\label{eqn:covStruc}
\Sigma = \sum_{i=1}^r \lambda_i {v}_i {v}_i^T + \sigma^2 I_d := \Sigma_1 + \Sigma_2, \quad \quad (\lambda_1 \ge \ldots \ge \lambda_r > 0),
\end{equation}
and $\mathbb X$ be an $n$ by $d$ matrix, whose rows are independently sampled from the same distribution.
This is the spiked covariance model that has received intensive study in recent years.
Let the empirical covariance matrix be $\widehat{\Sigma} = \mathbb X^T \mathbb X / n$. Viewing the empirical covariance matrix as its population version plus an estimation error, we have the decomposition
\begin{equation*}
\widehat{\Sigma} = \Sigma_1 + \Sigma_2 +  \Big(\frac{1}{n} \mathbb X^T \mathbb X - \Sigma \Big),
\end{equation*}
which is a special case of the general decomposition in \eqref{eqn::decmp1}. Here, $\Sigma_2$ is the sparse component, and the estimation error $\mathbb X^T \mathbb X/n - \Sigma$ is the noise component. Note that $v_1, \dots, v_r$ are just the top $r$ leading eigenvectors of $\Sigma$ and we write $V = [v_1, \dots, v_r]$. Assume the top $r$ eigenvectors of $\widehat\Sigma$ are denoted by $\widehat v_1,\dots, \widehat v_r$.  We want to find an $\ell_\infty$ bound on the estimation error $\widehat{v_i} - v_i$ for all $i \in [r]$.

When the dimension $d$ is comparable to or larger than $n$, it has been shown by \cite{JohLu09} that the leading empirical eigenvector $\widehat v_1$ is not a consistent estimate of the true eigenvector $v_1$, unless we assume larger eigenvalues. Indeed, we will impose more stringent conditions on $\lambda_i$'s in order to obtain good $\ell_{\infty}$ bounds.

Assuming the coherence $\mu(V)$ is bounded, we can easily see $\text{Var}(X_j) \le \sigma^2 + C \lambda_1/d$ for some constant $C$. It follows from the standard concentration result (e.g., \cite{Ver10}) that if rows of $\mathbb X$ contains  i.i.d sub-Gaussian vectors and $\log d = O(n)$, then with probability greater than $1 - d^{-1}$,
\begin{equation}\label{ineqn:empCov}
\| \frac{1}{n} \mathbb X^T \mathbb X - \Sigma \|_{\max} \le C \big(\sigma^2 + \frac{\lambda_1}{d}\big) \sqrt{\frac{\log d}{n}}.
\end{equation}
To apply Theorem \ref{thm::symindiv}, we treat $\Sigma_1$ as $A$ and $\widehat \Sigma - \Sigma_1$ as $E$. If the conditions in Theorem \ref{thm::symindiv} are satisfied, we will obtain
\begin{equation}\label{eq3.3}
    \max_{1\le k \le r} \|\widehat{v}_k - v_k\|_{\infty} = O (\|E\|_{\infty} /(\lambda_r \sqrt{d})  + \|E\|_2 / (\delta \sqrt{d}) ).
\end{equation}
Note there are simple bounds on $\|E\|_{\infty}$ and $\|E\|_2$:
\begin{equation*}
\|E\|_2 \le \|E\|_{\infty} \le \sigma^2 + d \, \| \frac{1}{n} \mathbb X^T \mathbb X- \Sigma \|_{\max} \le C \Big\{ 1 + \big(d \sigma^2 + \lambda_1 \big)\sqrt{\frac{\log d}{n}} \Big\}.
\end{equation*}
By assuming a strong uniform eigengap, the conditions in Theorem \ref{thm::symindiv} are satisfied, and the bound in (\ref{eq3.3}) can be simplified. Define the uniform eigengap as
$$
\gamma = \min \{  \lambda_i - \lambda_{i+1} : 1 \le i \le r \}, \qquad \lambda_{r+1} := 0.
$$
Note that $\gamma \le \min\{ \lambda_r, \delta \}$, so if $\gamma > C ( 1 + \big(d \sigma^2 + \lambda_1 \big)\sqrt{\log d/n} )$, we have
$$
 \max_{1\le k \le r} \|\widehat{v}_k - v_k\|_{\infty} = O_P \Big( \frac{\|E\|_{\infty}}{\gamma \sqrt{d}}\Big ) = O_P\Big( \frac{1 + \big(d \sigma^2 + \lambda_1 \big)\sqrt{\log d/n} }{\gamma \sqrt{d}} \Big),
$$
In particular, when $\lambda_1 \asymp \gamma$ and $\gamma \gg \max\{1, \sigma^2d\sqrt{\log d/n}\}$, we have
\begin{equation*}
\max_{1\le k \le r} \| \widehat{v}_k  - v_k\|_{\infty} = o_P\Big(\frac{1}{\sqrt{d}}\Big).
\end{equation*}
The above analysis pertains to the structure of sample covariance matrix.
In the following subsections, we will estimate the covariance matrix using more complicated robust procedure. Our perturbation theorems in Section \ref{sec::2} provide a fast and clean approach to obtain new results.

\medskip

\subsection{PCA for robust covariance estimation}\label{sec:app2}
The usefulness of Theorem \ref{thm::symindiv} is more pronounced when the random variables are heavy-tailed. Consider again the covariance matrix $\Sigma$ with structure (\ref{eqn:covStruc}). Instead of assuming sub-Gaussian distribution, we assume there exists a constant $C>0$ such that $\max_{j \le d} EX_j^4 < C$, i.e.\ the fourth moments of the random variables are uniformly bounded.

Unlike sub-Gaussian variables, there is no concentration bound similar to (\ref{ineqn:empCov}) for the empirical covariance matrix. Fortunately, thanks to recent advances in robust statistics (e.g., \cite{Cat12}), robust estimate of $\Sigma$ with guaranteed concentration property becomes possible. We shall use the method proposed in \cite{FanLiWan16}. Motivated by the classical $M$-estimator of \cite{Hub64}, \cite{FanLiWan16} proposed a robust estimator for each element of $\widehat{\Sigma}$, by solving a Huber loss based minimization problem
\begin{equation} \label{eqn:huber}
\widehat{\Sigma}_{ij} = \argmin_{\mu}  \sum_{t=1}^n  l_{\alpha}(X_{ti}X_{tj} - \mu),
\end{equation}
where $l_{\alpha}$ is the Huber loss defined as
\begin{equation*}
l_{\alpha} (x) = \begin{cases}
2 \alpha|x| - \alpha^{2}, & |x| \ge \alpha, \\
x^2, & |x| \le \alpha.
\end{cases}
\end{equation*}
The parameter $\alpha$ is suggested to be $\alpha = \sqrt{ nv^2 / \log(\epsilon^{-1})}$ for $\epsilon \in (0,1)$, where $v$ is assumed to satisfy $v \ge \max_{ij}\sqrt{\text{Var}(X_iX_j)}$. If $\log(\epsilon^{-1}) \le n/8$, \cite{FanLiWan16} showed
\begin{equation*}
P\Big(| \widehat{\Sigma}_{ij} - \Sigma_{ij}| \le 4v \sqrt{\frac{\log(\epsilon^{-1})}{n}} \Big) \ge 1 - 2\epsilon.
\end{equation*}

From this result, the next proposition is immediate by taking $\epsilon = d^{-3}$.

\begin{prop} \label{prop3.1}
Suppose that there is a constant $C$ with $\max_{j\le d} EX_j^4 < C$. Then with probability greater than $1-d^{-1}(1+d^{-1})$, the robust estimate of covariance matrix with $\alpha = \sqrt{ 3 nv^2  \log(d)}$ satisfies
\begin{equation*}
\| \widehat{\Sigma} - \Sigma \|_{\max} \le 4 v \sqrt{\frac{3 \log d}{n}},
\end{equation*}
where $v$ is a pre-determined parameter assumed to be no less than $\max_{ij}\sqrt{\text{Var}(X_iX_j)}$.
\end{prop}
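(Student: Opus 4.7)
The plan is to obtain Proposition 3.1 as an essentially immediate corollary of the entrywise concentration bound of \cite{FanLiWan16} quoted just above the statement, by choosing the confidence parameter $\epsilon$ so that a union bound over all entries of the symmetric matrix $\widehat\Sigma$ yields the desired tail probability.

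First, I would specialize the cited concentration inequality to $\epsilon = d^{-3}$. With this choice, $\log(\epsilon^{-1}) = 3 \log d$, so the prescribed tuning parameter becomes $\alpha = \sqrt{nv^2/\log(\epsilon^{-1})}$ \ldots{} wait, one has to be careful: the statement of the cited bound sets $\alpha = \sqrt{nv^2/\log(\epsilon^{-1})}$ in the preceding text, but the proposition writes $\alpha = \sqrt{3 n v^2 \log(d)}$. I would reconcile this by treating the Fan--Li--Wang result as the black-box inequality $P(|\widehat\Sigma_{ij} - \Sigma_{ij}| \le 4v \sqrt{\log(\epsilon^{-1})/n}) \ge 1 - 2\epsilon$ valid whenever the tuning constant $\alpha$ is chosen according to $\epsilon$ and $\log(\epsilon^{-1}) \le n/8$. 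Plugging $\epsilon = d^{-3}$ into the prescribed $\alpha$ yields $\alpha = \sqrt{3 n v^2 \log d}$, matching the proposition's choice, and the per-entry deviation bound becomes $4 v \sqrt{3 \log d / n}$, also matching.

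Next, I would apply a union bound. Because $\widehat\Sigma$ is symmetric (as both $\widehat\Sigma_{ij}$ and $\widehat\Sigma_{ji}$ are defined by the same $M$-estimation problem on $X_{\cdot i} X_{\cdot j}$), there are only $d(d+1)/2$ distinct entries to control. Summing the per-entry failure probability $2 d^{-3}$ over these entries gives total failure probability at most
\begin{equation*}
\frac{d(d+1)}{2} \cdot 2 d^{-3} \;=\; \frac{d+1}{d^{2}} \;=\; \frac{1}{d} + \frac{1}{d^{2}} \;=\; d^{-1}(1 + d^{-1}),
\end{equation*}
which is exactly the complement of the probability stated in the proposition. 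On the complementary event, $|\widehat\Sigma_{ij} - \Sigma_{ij}| \le 4v\sqrt{3\log d/n}$ simultaneously for all $(i,j)$, which is the definition of $\|\widehat\Sigma - \Sigma\|_{\max} \le 4v\sqrt{3\log d/n}$.

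There is really no hard step here: the argument is mechanical once one commits to $\epsilon = d^{-3}$. The only subtlety worth flagging is the implicit regularity requirement $\log(\epsilon^{-1}) \le n/8$, i.e.\ $24 \log d \le n$, which is needed for the cited concentration bound to apply; this is a very mild sample-size assumption and is compatible with the high-dimensional regime of the paper, but should be stated (or inherited from the standing assumptions) to make the proof fully rigorous. Beyond that, verifying that $v \ge \max_{ij}\sqrt{\Var(X_i X_j)}$ is finite under the moment condition $\max_{j} E X_j^{4} < C$ is a one-line application of Cauchy--Schwarz, which ensures the scale parameter $v$ in the proposition is well-defined.
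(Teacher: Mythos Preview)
Your proposal is correct and matches the paper's approach exactly: the paper states that the proposition ``is immediate by taking $\epsilon = d^{-3}$,'' and your union-bound computation over the $d(d+1)/2$ distinct entries supplies precisely the details behind that one-line claim, landing on the stated failure probability $d^{-1}(1+d^{-1})$. Your remarks on the mild sample-size condition $24\log d \le n$ and on the discrepancy in the displayed formula for $\alpha$ are valid observations but do not affect the argument.
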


This result relaxes the sub-Gaussianity assumption by robustifying the covariance estimate. It is apparent that the $\ell_\infty$ bound in the previous section is still valid in this case. To be more specific, suppose $\mu(V)$ is bounded by a constant.  Then, \eqref{eq3.3} holds for the PCA based on the robust covariance estimation. When $\lambda_1 \asymp \gamma$ and $\gamma \gg \max\{1, \sigma^2 d \sqrt{\log d /n}\}$, we again have
\begin{equation*}
\max_{1 \le k \le r} \| \widehat{v}_k - v_k\|_{\infty} =  O_P\Big( \frac{1 + \big(d \sigma^2 + \lambda_1 \big)\sqrt{\log d/n} }{\gamma \sqrt{d}} \Big) = o_P\Big(\frac{1}{\sqrt{d}}\Big).
\end{equation*}

Note that an entrywise estimation error $o_p(1 / \sqrt{d})$ necessarily implies consistency of the estimated eigenvectors, since we can easily convert an $\ell_\infty$ result into an $\ell_2$ result. The minimum signal strength (or magnitude of leading eigenvalues) for such consistency is shown to be $\sigma^2d/n$ under the sub-Gaussian assumption \citep{WanFan17}.

If the goal is simply to prove consistency of $\widehat{v}_k$, the strategy of using our $\ell_\infty$ perturbation bounds is not optimal. However, there are also merits: our result is nonasymptotic; it holds for more general distributions (beyond sub-Gaussian distributions); and its entrywise bound gives stronger guarantee. Moreover, the $\ell_\infty$ perturbation bounds provide greater flexibility for analysis, since it is straightforward to adapt analysis to problems with more complicated structure. For example, the above discussion can be easily extended to a general $\Sigma_2$ with bounded $\|\Sigma_2\|_{\infty}$ rather than a diagonal matrix.


\subsection{Robust covariance estimation via factor models} \label{sec::covEstFacMod}

In this subsection, we will apply Theorem \ref{thm::symindiv} to robust large covariance matrix estimation for approximate factor models in econometrics. With this theorem, 
we are able to extend the data distribution in factor analysis beyond exponentially decayed distributions considered by  \cite{FanLiaMin13}, to include heavy-tailed distributions.

Suppose the observation $y_{it}$, say, the excess return at day $t$ for stock $i$, admits a decomposition
\begin{equation}\label{eqn:decomp1}
y_{it} = b_i^T f_t + u_{it}, \quad \quad i\le d, t \le n,
\end{equation}
where $b_i \in \mathbb{R}^r$ is the unknown but fixed loading vector, $f_t \in \mathbb{R}^r$ denotes the unobserved factor vector at time $t$, and $u_{it}$'s represent the idiosyncratic noises. Let $y_t = (y_{1t}, \dots, y_{dt})^T$ and $u_t = (u_{1t},\dots, u_{dt})^T$ so that $y_t = B f_t + u_t$, where $B = (b_1, \dots, b_d)^T \in \mathbb{R}^{d \times r}$. Suppose that $f_t$ and $u_t$ are uncorrelated and centered random vectors, with bounded fourth moments, i.e., the fourth moments of all entries of $f_t$ and $u_t$ are bounded by some constant.  We assume $\{f_t, u_t\}$ are independent for $t$, although it is possible to allow for weak temporal dependence as in \cite{FanLiaMin13}.
From (\ref{eqn:decomp1}), we can decompose $\Sigma = \text{Cov}(y_t)$ into a low rank component and a residual component:
\begin{equation}\label{eqn:decomp2}
\Sigma = BB^T + \Sigma_u,
\end{equation}
where $\Sigma_u := \text{Cov}(u_t)$. To circumvent the identifiability issue common in latent variable models, here we also assume, without loss of generality, $\text{Cov}(f_t) = I_r$ and that $B^TB$ is a diagonal matrix, since rotating $B$ 
will not affect the above decomposition (\ref{eqn:decomp2}).

We will need two major assumptions for our analysis: (1) the factors are \textit{pervasive} in the sense of Definition \ref{def:per}, and (2) there is a constant $C>0$ such that $\|\Sigma_u^{-1}\|_2, \|\Sigma_u\|_2 \le C$, which are standard assumptions in the factor model literature. The pervasive assumption is reasonable in financial applications,  since the factors have impacts on a large fraction of the outcomes \citep{ChaRot82, Bai03}. If the factor loadings $\{b_i\}_{i=1}^d$ are regarded as random realizations from a bounded random vector, the assumption holds \citep{FanLiaMin13}.
\begin{defn}\label{def:per}
In the factor model (\ref{eqn:decomp1}), the factors are called pervasive if there is a constant $C>0$ such that $\|B\|_{\max} \le C$ and the eigenvalues of the $r$ by $r$ matrix $B^TB/d$ are distinct and bounded away from zero and infinity.
\end{defn}

Let $\{ \lambda_i, v_i \}_{i=1}^r$ be the top $r$ eigenvalues and eigenvectors of $\Sigma$, and similarly, $\{ \overline{\lambda}_i, \overline{v}_i \}_{i=1}^r$ for $BB^T$.  In the following proposition, we show that pervasiveness is naturally connected to the incoherence structure. This connects well between the econometrics and machine learning literatures and provide a good interpretation on the concept of the incoherence.  Its proof can be found in the appendix.

\begin{prop}\label{prop:fm}
Suppose there exists a constant $C>0$ such that $\|\Sigma_u\| \le C$. The factors $f_t$ are pervasive if and only if the coherence $\mu(V)$ for $V = (v_1, \dots, v_r) \in \mathbb{R}^{d \times r}$ is bounded by some constant, and $\lambda_i = \lambda_i(\Sigma) \asymp d$ for $i \le r$ so that $\min_{1 \le i \ne j \le r} |\lambda_i - \lambda_j| / \lambda_{j} > 0$.
\end{prop}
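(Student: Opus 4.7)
The plan is to reduce the equivalence to statements about the unperturbed matrix $BB^T$, using Weyl's inequality to transfer eigenvalue information between $\Sigma$ and $BB^T$, and using the identifiability assumption $B^T B = \overline{\Lambda}$ together with the Davis--Kahan $\sin\Theta$ theorem to transfer eigenvector information. Writing $\overline{\lambda}_1\ge\cdots\ge\overline{\lambda}_r$ for the diagonal entries of $B^TB$, the diagonality of $B^TB$ makes these the nonzero eigenvalues of $BB^T$, and the corresponding eigenvectors $\overline{V}=[\overline{v}_1,\ldots,\overline{v}_r]$ are simply the normalized columns of $B$. This yields the crucial identity $B_{ik} = \overline{V}_{ik}\sqrt{\overline{\lambda}_k}$, so $\|B\|_{\max}\le \sqrt{\overline{\lambda}_1}\,\|\overline{V}\|_{\max}$ and $\|B\|_2=\sqrt{\overline{\lambda}_1}$. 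Weyl's inequality applied to $\Sigma = BB^T + \Sigma_u$ gives $|\lambda_i(\Sigma) - \overline{\lambda}_i|\le \|\Sigma_u\| = O(1)$ uniformly in $i$.

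For the forward direction, pervasiveness yields $\overline{\lambda}_i\asymp d$ with gaps $|\overline{\lambda}_i-\overline{\lambda}_j|=\Omega(d)$; Weyl then gives $\lambda_i(\Sigma)\asymp d$ and $|\lambda_i-\lambda_j|/\lambda_j=\Omega(1)$, settling the spectral part. For the coherence of $V$, I would start from the eigenvalue equation $v_i = \lambda_i^{-1}(BB^T v_i + \Sigma_u v_i)$ and estimate entrywise: the $j$-th coordinate of $BB^T v_i$ equals $b_j^T(B^T v_i)$ where $\|b_j\|_2\le\sqrt{r}\|B\|_{\max}=O(1)$ by pervasiveness and $\|B^T v_i\|_2\le\|B\|_2=O(\sqrt{d})$, while $\|\Sigma_u v_i\|_\infty\le\|\Sigma_u\|=O(1)$. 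Dividing through by $\lambda_i\asymp d$ gives $\|v_i\|_\infty = O(1/\sqrt{d})$, and summing $r$ such contributions yields $\mu(V)=O(1)$.

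For the reverse direction, Weyl first gives $\overline{\lambda}_i \asymp d$ with $|\overline{\lambda}_i-\overline{\lambda}_j|=\Omega(d)$ for $i\ne j\le r$, establishing that the eigenvalues of $B^TB/d$ are distinct and bounded away from zero and infinity. It remains to bound $\|B\|_{\max}$, for which I need $\|\overline{V}\|_{\max}=O(1/\sqrt{d})$. I would apply Davis--Kahan to $\Sigma = BB^T + \Sigma_u$: the relevant eigengap for $BB^T$ is $\overline{\lambda}_r - 0 = \overline{\lambda}_r \asymp d$ (and Weyl shows $\lambda_{r+1}(\Sigma)\le \|\Sigma_u\|=O(1)\ll \lambda_r(\Sigma)$, so the gap survives perturbation), giving an orthogonal $R\in\mathbb{R}^{r\times r}$ with $\|V - \overline{V}R\|_2 = O(\|\Sigma_u\|/\overline{\lambda}_r)=O(1/d)$. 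Bounded coherence of $V$ implies $\|V_{i,\cdot}\|_2\le\sqrt{r\mu(V)/d}=O(1/\sqrt{d})$, and since right-multiplication by $R^T$ preserves row $\ell_2$-norms, $\|VR^T\|_{\max}\le \max_i\|V_{i,\cdot}\|_2=O(1/\sqrt{d})$. The triangle inequality, together with $\|\,\cdot\,\|_{\max}\le\|\,\cdot\,\|_2$, then yields $\|\overline{V}\|_{\max}\le \|\overline{V}-VR^T\|_2 + \|VR^T\|_{\max} = O(1/\sqrt{d})$, so $\|B\|_{\max}\le \sqrt{\overline{\lambda}_1}\cdot O(1/\sqrt{d})=O(1)$.

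The main obstacle is this last step, transferring incoherence of the perturbed eigenvectors $V$ back to the unperturbed $\overline{V}$. The enabler is the large eigengap $\overline{\lambda}_r\asymp d$ of $BB^T$, which shrinks the Davis--Kahan distance to $O(1/d)$, an order of magnitude below the $1/\sqrt{d}$ scale at which $\|\overline{V}\|_{\max}$ lives, so that the triangle inequality comfortably closes. Everything else consists of bookkeeping with Weyl's inequality and the identifiability identity $B=\overline{V}\overline{\Lambda}^{1/2}$.
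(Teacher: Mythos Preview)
Your proposal is correct, and in both directions it follows a genuinely different route from the paper's own proof. For the implication ``pervasive $\Rightarrow$ bounded coherence,'' the paper first observes that the normalized columns of $B$ are the eigenvectors $\overline V$ of $BB^T$, so $\mu(\overline V)=O(1)$ directly from $\|B\|_{\max}=O(1)$ and $\overline\lambda_i\asymp d$; it then invokes its own $\ell_\infty$ perturbation Theorem~\ref{thm::symindiv} (with $E=\Sigma_u$) to transfer this to $V$. Your argument via the eigenvalue equation $v_i=\lambda_i^{-1}(BB^Tv_i+\Sigma_u v_i)$ and the Cauchy--Schwarz bound $|(BB^Tv_i)_j|\le\|b_j\|_2\|B^Tv_i\|_2=O(\sqrt d)$ is more elementary and fully self-contained, avoiding the paper's main machinery. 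For the converse ``bounded coherence $\Rightarrow$ pervasive,'' the paper takes a shorter path: it bounds the diagonal entries $\Sigma_{ii}\le\sum_{j\le r}\lambda_j v_{ij}^2+\lambda_{r+1}=O(\mu(V)+1)$ and then reads off $\|b_i\|_2^2\le\Sigma_{ii}$ from $\Sigma=BB^T+\Sigma_u$ with $(\Sigma_u)_{ii}\ge 0$. Your route through Davis--Kahan and the row-norm-preserving rotation is correct but less direct here; the paper's $\Sigma_{ii}$ trick sidesteps any eigenvector comparison entirely. The eigenvalue parts via Weyl's inequality are handled identically in both arguments.
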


Our goal is to obtain a good covariance matrix estimator by exploiting the structure (\ref{eqn:decomp2}). Our strategy is to use a generalization of the principal orthogonal complement thresholding (POET) method proposed in \cite{FanLiaMin13}. The generic POET procedure encompasses three steps:
\begin{itemize}
\item[(1)] Given three pilot estimators $\widehat\Sigma, \widehat\Lambda = \diag(\widehat\lambda_1, \dots, \widehat\lambda_r), \widehat V = (\widehat v_1, \dots, \widehat v_r)$  respectively for true covariance $\Sigma$, leading eigenvalues $\Lambda = \diag(\lambda_1, \dots, \lambda_r)$ and leading eigenvectors $V = (v_1, \dots, v_r)$, compute the principal orthogonal complement $\widehat\Sigma_u$:
\begin{equation}
    \widehat\Sigma_u = \widehat\Sigma -  \widehat V \widehat\Lambda \widehat V^T \,.
\end{equation}

\item[(2)] Apply the correlation thresholding to $\widehat\Sigma_u$ to obtain thresholded estimate $\widehat\Sigma_u^{\top}$ defined as follows:
\begin{equation} \label{thresholding}
\widehat\Sigma_{u,ij}^{\top} = \left\{  \begin{array}{lr} \widehat\Sigma_{u,ij}, & i = j\\
s_{ij} (\widehat\Sigma_{u,ij}) I(|\widehat\Sigma_{u,ij}| \ge \tau_{ij}),  & i \ne j\end{array} \right.,
\end{equation}
where $s_{ij}(\cdot)$ is the generalized shrinkage function \citep{AntFan01,RotLevZhu09} and $\tau_{ij} = \tau (\hat\sigma_{u,ii} \hat\sigma_{u,jj})^{1/2}$ is an entry-dependent threshold. $\tau$ will be determined later in Theorem \ref{suff}.  This step exploits the sparsity of $\Sigma_u$.

\item[(3)] Construct the final estimator $\widehat\Sigma^{\top} = \widehat V \widehat\Lambda \widehat V^T + \widehat\Sigma_u^{\top}$.
\end{itemize}

The key feature in the above procedure lies in the flexibility of choosing the pilot estimators in the first step. We will choose $\widehat \Sigma$ according to data generating distribution. Typically we can use $\hat \lambda_i, \hat v_i$ for $i \le r$ as the eigenvalues/vectors of $\widehat\Sigma$. However, $\hat{\Lambda}$ and $\hat{V}$ in general do not have to come from the spectral information of $\widehat\Sigma$ and can be obtained separately via different methods. 

To guide the selection of proper pilot estimators, \cite{FanLiuWan17} provided a high level sufficient condition for this simple procedure to be effective, and its performance is gauged, in part, through the sparsity level of $\Sigma_u$, defined as $m_d := \max_{i \le d} \sum_{j \le d} |\Sigma_{u,ij}|^q$. When $q = 0$, $m_d$ corresponds to the maximum number of nonzero elements in each row of $\Sigma_u$. For completeness, we present the theorem given by \cite{FanLiuWan17} in the following.

\begin{thm} \label{suff}
Let $w_n = \sqrt{\log d/n} + 1/\sqrt{d}$.
Suppose there exists $C > 0$ such that $\|\Sigma_u^{-1}\|, \|\Sigma_u \| \le C$ and we have pilot estimators $\widehat\Sigma,  \widehat\Lambda, \widehat V$ satisfying
\begin{align}
& \| \widehat{\Sigma} - \Sigma \|_{\max}  = O( \sqrt{\log d /n }) , \label{suff1}\\
& | \widehat{\lambda}_i / \lambda_i - 1 |  = O( \sqrt{\log d /n }) , \label{suff2} \\
& \| \widehat{v}_i - v_i \|_{\infty}  = O( w_n/\sqrt{d}) . \label{suff3}
\end{align}
Under the pervasiveness condition of the factor model (\ref{eqn:decomp1}), with $\tau \asymp w_n$, if $m_d w_n^{1-q} = o(1)$, the following rates of convergence hold with the generic POET procedure:

\beq \label{rate1}
\|\widehat\Sigma_u^{\top} - \Sigma_u\|_{2}  = O\Big( m_d w_n^{1-q} \Big) = \|(\widehat\Sigma_u^{\top})^{-1} - {\Sigma_u}^{-1}\|_{2}\,,
\eeq
and
\beq \label{rate2}
\begin{aligned}
&\|\widehat\Sigma^{\top} - \Sigma\|_{\max} = O\Big( w_n \Big)\,, \\
&\|\widehat\Sigma^{\top} - \Sigma\|_{\Sigma} =  O\Big( \frac{\sqrt{d} \log d}{n} + m_d w_n^{1-q} \Big)\,, \\
&\|(\widehat\Sigma^{\top})^{-1} - \Sigma^{-1}\|_2 = O\Big( m_d w_n^{1-q} \Big)\,,
\end{aligned}
\eeq
where $\|A\|_{\Sigma} = d^{-1/2} \|\Sigma^{-1/2} A \Sigma^{-1/2}\|_F$ is the relative Frobenius norm.
\end{thm}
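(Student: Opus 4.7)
The plan is to follow the three-stage POET template, with Theorem \ref{thm::symindiv} supplying the central entry-wise controls at each step.

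\textbf{Stage 1 (entry-wise closeness of $\widehat\Sigma_u$).}
First I would establish $\|\widehat\Sigma_u - \Sigma_u\|_{\max} = O(w_n)$ via the decomposition
\[
\widehat\Sigma_u - \Sigma_u = (\widehat\Sigma - \Sigma) + (V\Lambda V^T - \widehat V \widehat\Lambda \widehat V^T) + (BB^T - V\Lambda V^T).
\]
The first piece is $O(\sqrt{\log d/n})$ by hypothesis. For the middle piece, write $V\Lambda V^T - \widehat V\widehat\Lambda\widehat V^T$ as the standard telescoping sum $(V - \widehat V)\Lambda V^T + \widehat V(\Lambda - \widehat\Lambda)V^T + \widehat V\widehat\Lambda(V - \widehat V)^T$ and bound each entry using coherence $|v_{k,i}| = O(1/\sqrt{d})$, pervasiveness $\lambda_k \asymp d$, $|\widehat\lambda_k/\lambda_k - 1| = O(\sqrt{\log d/n})$, and $|\widehat v_{k,i} - v_{k,i}| = O(w_n/\sqrt{d})$; every cross term contributes $O(w_n)$ per entry. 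The last piece is the factor-vs.-spectral mismatch: since $\Sigma = BB^T + \Sigma_u$ with the spectral gap $\lambda_r - \lambda_{r+1} \asymp d$ (Proposition \ref{prop:fm}) overwhelming $\|\Sigma_u\|_2 = O(1)$, Theorem \ref{thm::symindiv} applied with $A = BB^T$ and $E = \Sigma_u$ produces $\ell_\infty$-closeness of $V$ to the eigenvectors of $BB^T$ strong enough that coherence propagates it to $\|V\Lambda V^T - BB^T\|_{\max} = o(w_n)$.

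\textbf{Stage 2 (thresholding).}
Given the entry-wise bound of Stage 1 and the weak sparsity $m_d = \max_i \sum_j |\Sigma_{u,ij}|^q$, I would invoke the universal thresholding theory of \cite{BicLev08} and \cite{RotLevZhu09}: with $\tau \asymp w_n$ and $m_d w_n^{1-q} = o(1)$, routine bookkeeping on the thresholded entries yields $\|\widehat\Sigma_u^\top - \Sigma_u\|_2 \le \|\widehat\Sigma_u^\top - \Sigma_u\|_\infty = O(m_d w_n^{1-q})$. Combined with $\|\Sigma_u^{-1}\|_2 = O(1)$, the elementary inverse-perturbation inequality $\|A^{-1} - B^{-1}\|_2 \le \|A^{-1}\|_2\|B^{-1}\|_2\|A-B\|_2$ delivers the matching rate for $\|(\widehat\Sigma_u^\top)^{-1} - \Sigma_u^{-1}\|_2$, completing \eqref{rate1}.

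\textbf{Stage 3 (full estimator).}
For \eqref{rate2}, write $\widehat\Sigma^\top - \Sigma = (\widehat V\widehat\Lambda\widehat V^T - BB^T) + (\widehat\Sigma_u^\top - \Sigma_u)$. The max-norm bound is immediate from Stages 1--2. The inverse spectral rate follows by applying the Sherman--Morrison--Woodbury identity to both $\widehat\Sigma^\top = \widehat\Sigma_u^\top + \widehat V\widehat\Lambda\widehat V^T$ and $\Sigma = \Sigma_u + BB^T$, viewed as rank-$r$ updates of invertible matrices; the potentially large factor $\lambda_k \asymp d$ in the low-rank correction is cancelled by the Woodbury denominator, leaving the stated rate. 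The relative Frobenius bound is the most delicate: one splits $\Sigma^{-1/2}(\widehat\Sigma^\top - \Sigma)\Sigma^{-1/2}$ into the rank-$r$ signal direction (where $\Sigma^{-1/2}$ contracts by $1/\sqrt{d}$, combining with the signal error to produce the $\sqrt{d}\log d/n$ contribution) and the complementary direction (where $\Sigma^{-1/2}$ is $O(1)$, producing $m_d w_n^{1-q}$).

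\textbf{Main obstacle.}
The crux of the argument is the factor-vs.-spectral mismatch bound $\|V\Lambda V^T - BB^T\|_{\max} = o(w_n)$; a naive spectral argument gives only $O(\|\Sigma_u\|_2) = O(1)$ since the bottom-$(d-r)$ part of $\Sigma$ has unit operator norm, so the new $\ell_\infty$ eigenvector perturbation machinery of this paper is indispensable here, exploiting the huge gap $\asymp d$ between signal and noise eigenvalues guaranteed by pervasiveness. A secondary subtlety is the relative Frobenius rate, which demands a careful Woodbury-based decomposition of $\Sigma^{-1}$ to separate the two structurally different contributions $\sqrt{d}\log d/n$ and $m_d w_n^{1-q}$, rather than any black-box norm inequality.
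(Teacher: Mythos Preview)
The paper does not prove Theorem~\ref{suff}: it is quoted from \cite{FanLiuWan17}, as the sentence immediately before the statement makes explicit (``For completeness, we present the theorem given by \cite{FanLiuWan17} in the following''), and the appendix contains no argument for it. There is therefore no in-paper proof to compare your sketch against.

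Your three-stage outline is the standard POET route and is a reasonable reconstruction of how such a proof goes. Two remarks are worth making. First, your claim that Theorem~\ref{thm::symindiv} is ``indispensable'' for the factor-vs-spectral mismatch $\|V\Lambda V^T - BB^T\|_{\max}$ is not right: since $\|\Sigma_u\|_2 = O(1)$ while the relevant eigengap is $\asymp d$, the ordinary Davis--Kahan theorem already gives $\|v_i-\bar v_i\|_2 = O(1/d)$, hence $\|v_i-\bar v_i\|_\infty = O(1/d)$, which combined with incoherence and $\lambda_i\asymp d$ yields $\|V\Lambda V^T - BB^T\|_{\max}=O(1/\sqrt d)$. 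In this paper the role of Theorem~\ref{thm::symindiv} is to \emph{verify} hypothesis~(\ref{suff3}) for concrete pilot estimators (Proposition~\ref{prop3.3}), not to prove Theorem~\ref{suff} itself. Second, your claim that the mismatch is $o(w_n)$ should be $O(1/\sqrt d)=O(w_n)$; this term is precisely the source of the $1/\sqrt d$ summand in $w_n$, as the paper remarks right after the theorem.
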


We remark that the additional term $1/\sqrt{d}$ in $w_n$, is due to the estimation of unobservable factors and is negligible when the dimensional $d$ is high. The optimality of the above rates of convergence is discussed in details in \cite{FanLiuWan17}.
Theorem \ref{suff} reveals a profound deterministic connection between the estimation error bound of the pilot estimators with the rate of convergences of the POET output estimators. Notice that the eigenvector estimation error is under the $\ell_{\infty}$ norm, for which our $\ell_{\infty}$ perturbation bounds will prove to be useful.

\medskip
\medskip
In this subsection, since we assume only bounded fourth moments, we choose $\widehat{\Sigma}$ to be the robust estimate of covariance matrix $\Sigma$ defined in (\ref{eqn:huber}).
We now invoke our $\ell_\infty$ bounds to show that the spectrum properties (eigenvalues and eigenvectors) are stable to perturbation.

Let us decompose $\widehat{\Sigma}$ into a form such that Theorem \ref{thm::symindiv} can be invoked:
\begin{equation*}
\widehat{\Sigma} =  \sum_{i=1}^r \overline{\lambda}_i \overline{v}_i \overline{v}_i^T + \Sigma_u + (\widehat{\Sigma} - \Sigma),
\end{equation*}
where $\widehat{\Sigma}$ is viewed as $\widetilde{A}$, the low-rank part $\sum_{i=1}^r \overline{\lambda}_i \overline{v}_i \overline{v}_i^T$, which is also $BB^T$, is viewed as $A$, and the remaining terms are treated as $E$. The following results follow immediately.

\begin{prop} \label{prop3.3}
Assume that there is a constant $C>0$ such that $ \| \Sigma_u \| \le C$. If the factors are pervasive, then with probability greater than $1 - d^{-1}$, we have
(\ref{suff1}) -- (\ref{suff3}) hold with $\widehat\lambda_i, \widehat v_i$ as the leading eigenvalues/vectors of $\widehat \Sigma$ for $i \le r$. In addition, (\ref{rate1}) and (\ref{rate2}) hold.
\end{prop}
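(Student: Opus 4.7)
The plan is to verify the three pilot conditions (\ref{suff1})--(\ref{suff3}) of Theorem \ref{suff} for the robust estimator $\widehat\Sigma$ from (\ref{eqn:huber}) and its top-$r$ eigenpairs; once these are in hand, (\ref{rate1}) and (\ref{rate2}) follow verbatim from Theorem \ref{suff}. The max-norm bound (\ref{suff1}) is precisely Proposition \ref{prop3.1}: the bounded fourth moments allow us to take $v$ as an absolute constant and $\alpha \asymp \sqrt{n\log d}$, producing $\|\widehat\Sigma - \Sigma\|_{\max} = O(\sqrt{\log d/n})$ with probability at least $1 - d^{-1}(1 + d^{-1})$. For the eigenvalue ratio (\ref{suff2}), I would combine Weyl's inequality with the elementary bound $\|M\|_2 \le d\|M\|_{\max}$, giving $|\widehat\lambda_i - \lambda_i| \le \|\widehat\Sigma - \Sigma\|_2 = O(d\sqrt{\log d/n})$; Proposition \ref{prop:fm} then supplies $\lambda_i \asymp d$ for $i\le r$ under pervasiveness with $\|\Sigma_u\|_2 \le C$, so division delivers (\ref{suff2}).

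The heart of the argument is (\ref{suff3}), which I would obtain by applying Theorem \ref{thm::symindiv} to the decomposition
\begin{equation*}
\widehat\Sigma \;=\; BB^T \;+\; \bigl[\Sigma_u + (\widehat\Sigma - \Sigma)\bigr],
\end{equation*}
treating $A = BB^T$ (exact rank $r$, so $\varepsilon = 0$) as the signal and $E = \Sigma_u + (\widehat\Sigma - \Sigma)$ as the perturbation. Pervasiveness, via Proposition \ref{prop:fm} applied to $BB^T$, supplies incoherence of the eigenvectors $\overline v_1,\dots,\overline v_r$ of $BB^T$, $\lambda_r(BB^T) \asymp d$, and gaps of order $d$ among its nonzero eigenvalues, so I may choose $\delta \asymp d$. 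Using $\|\Sigma_u\|_\infty = O(1)$ (from the sparsity $m_d = O(1)$ implicit in invoking Theorem \ref{suff}, combined with $\|\Sigma_u\|_{\max} \le \|\Sigma_u\|_2 \le C$) and $\|\widehat\Sigma - \Sigma\|_\infty \vee \|\widehat\Sigma - \Sigma\|_2 \le d\|\widehat\Sigma - \Sigma\|_{\max}$, the perturbation satisfies $\|E\|_\infty, \|E\|_2 = O(1 + d\sqrt{\log d/n})$, so Theorem \ref{thm::symindiv} yields
\begin{equation*}
\|\widehat v_i - \overline v_i\|_\infty \;=\; O\!\left(\frac{\|E\|_\infty}{\lambda_r(BB^T)\sqrt{d}} + \frac{\|E\|_2}{\delta\sqrt{d}}\right) \;=\; O\!\left(\frac{w_n}{\sqrt{d}}\right).
\end{equation*}
A second application of Theorem \ref{thm::symindiv}, now with the perturbation taken to be just $\Sigma_u$, compares the eigenvectors of $BB^T$ to those of $\Sigma = BB^T + \Sigma_u$ and yields $\|\overline v_i - v_i\|_\infty = O(1/d^{3/2}) = o(w_n/\sqrt{d})$. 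The triangle inequality then gives (\ref{suff3}).

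The main obstacle is ensuring $\|E\|_\infty$ stays well below $\lambda_r(BB^T) \asymp d$, so that the eigengap hypothesis $|\lambda_r| - \varepsilon = \Omega(r^3\mu^2 \|E\|_\infty)$ of Theorem \ref{thm::symindiv} is met; the crude bound $\|\widehat\Sigma - \Sigma\|_\infty \le d\|\widehat\Sigma - \Sigma\|_{\max}$ combined with an unrestricted $\|\Sigma_u\|_\infty$ can otherwise be as large as $\Theta(d)$. This is precisely where the sparsity $m_d = O(1)$ on $\Sigma_u$ (built into the hypotheses of Theorem \ref{suff}) and the standard regime $\log d = o(n)$ are essential. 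Once (\ref{suff1})--(\ref{suff3}) are established, Theorem \ref{suff} immediately yields (\ref{rate1}) and (\ref{rate2}).
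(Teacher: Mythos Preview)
Your overall plan matches the paper's: use Proposition \ref{prop3.1} for (\ref{suff1}), Weyl's inequality plus $\|\widehat\Sigma-\Sigma\|_2 \le d\|\widehat\Sigma-\Sigma\|_{\max}$ and $\lambda_i\asymp d$ for (\ref{suff2}), and Theorem \ref{thm::symindiv} applied to the decomposition $\widehat\Sigma = BB^T + [\Sigma_u + (\widehat\Sigma-\Sigma)]$ for (\ref{suff3}). The two-step triangle-inequality route through $\overline v_i$ is also fine and is implicit in the paper (it is exactly how Proposition \ref{prop:fm} is proved).

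There is, however, a genuine gap in how you bound $\|\Sigma_u\|_\infty$. You claim $\|\Sigma_u\|_\infty = O(1)$ by appealing to ``$m_d = O(1)$ built into the hypotheses of Theorem \ref{suff}'', but Theorem \ref{suff} assumes only $m_d w_n^{1-q}=o(1)$, which allows $m_d$ to grow with $d$, and Proposition \ref{prop3.3} itself imposes no sparsity condition on $\Sigma_u$ at all---only $\|\Sigma_u\|_2\le C$. So you cannot use $\|\Sigma_u\|_\infty=O(1)$. The paper's fix is the elementary inequality $\|\Sigma_u\|_\infty \le \sqrt{d}\,\|\Sigma_u\|_2 \le C\sqrt{d}$, valid for any symmetric $d\times d$ matrix. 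With this, $\|E\|_\infty = O(\sqrt{d}+d\sqrt{\log d/n})$, and since $\overline\lambda_r \asymp d$ under pervasiveness, the eigengap hypothesis of Theorem \ref{thm::symindiv} holds (assuming $\log d = o(n)$), and the bound becomes
\[
\frac{\|E\|_\infty}{\overline\lambda_r\sqrt{d}} = O\!\left(\frac{\sqrt{d}+d\sqrt{\log d/n}}{d\sqrt{d}}\right) = O\!\left(\frac{1}{\sqrt{d}}\Bigl(\frac{1}{\sqrt{d}}+\sqrt{\tfrac{\log d}{n}}\Bigr)\right) = O\!\left(\frac{w_n}{\sqrt{d}}\right).
\]
The same correction applies to your second application of Theorem \ref{thm::symindiv}: with $\|\Sigma_u\|_\infty\le C\sqrt{d}$ you get $\|\overline v_i - v_i\|_\infty = O(1/d)$ rather than $O(d^{-3/2})$, but this is still $O(w_n/\sqrt{d})$ since $w_n\ge 1/\sqrt{d}$. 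Once you replace the unjustified $\|\Sigma_u\|_\infty=O(1)$ with $\|\Sigma_u\|_\infty\le\sqrt{d}\,\|\Sigma_u\|_2$, the rest of your argument goes through.
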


The inequality (\ref{suff1}) follows directly from Proposition \ref{prop3.1} under the assumption of bounded fourth moments. It is also easily verifiable that (\ref{suff2}), (\ref{suff3}) follow from (\ref{suff1}) by Weyl's inequality and Theorem \ref{thm::symindiv} (noting that $\|\Sigma_u\|_{\infty} \le \sqrt{d} \|\Sigma_u\|$). See Section \ref{sec:app2} for more details.

Note that in the case of sub-Gaussian variables, sample covariance matrix and its leading eigenvalues/vectors will also serve the same purpose due to (\ref{ineqn:empCov}) and Theorem \ref{thm::symindiv} as discussed in Section \ref{sec::covEstFacMol}.

We have seen that the $\ell_{\infty}$ perturbation bounds are useful in robust covariance estimation, and particularly, they resolve a theoretical difficulty in the generic POET procedure for factor model based covariance matrix estimation.

\section{Simulations}

\subsection{Simulation: the perturbation result}
In this subsection, we implement numerical simulations to verify the perturbation bound in Theorem \ref{thm::symindiv}. We will show that the error behaves in the same way as indicated by our theoretical bound.

In the experiments, we let the matrix size $d$ run from $200$ to $2000$ by an increment of $200$. We fix the rank of $A$ to be $3$ ($r = 3$).
To generate an incoherence low rank matrix, we sample a $d \times d$ random matrix with iid standard normal variables, perform singular value decomposition, and extract the first $r$ right singular vectors $v_1, v_2, \ldots, v_r$. Let $V = (v_1,\ldots,v_r)$ and $D = \text{diag}(r\gamma, (r-1)\gamma, \ldots, \gamma)$ where $\gamma$ as before represents the eigengap. Then, we set $A = V D V^T$. By orthogonal invariance, $v_i$ is uniformly distributed on the unit sphere $\mathbb{S}^{d-1}$. It is not hard to see that with probability $ 1- O(d^{-1})$, the coherence of $V$ $\mu(V) = O(\sqrt{\log d})$.

We consider two types of sparse perturbation matrices $E$: (a) construct a $d \times d $ matrix $E_0$ by randomly selecting $s$ entries for each row, and sampling a uniform number in $[0,L]$ for each entry, and then symmetrize the perturbation matrix by setting $E = (E_0 + E_0^T)/2$; (b) pick $\rho \in (0,1), L' >0$, and let $E_{ij} = L'\rho^{|i - j|}$.
Note that in (b) we have $\| E \|_{\infty} \le 2L' / (1 - \rho)$, and thus we can choose suitable $L'$ and $\rho$ to control the $\ell_{\infty}$ norm of $E$. This covariance structure is common in cases where correlations between random variables depend on their ``distance'' $|i - j|$, which usually arises from autoregressive models.

\begin{figure}
    \centering
    \includegraphics[width=1\textwidth]{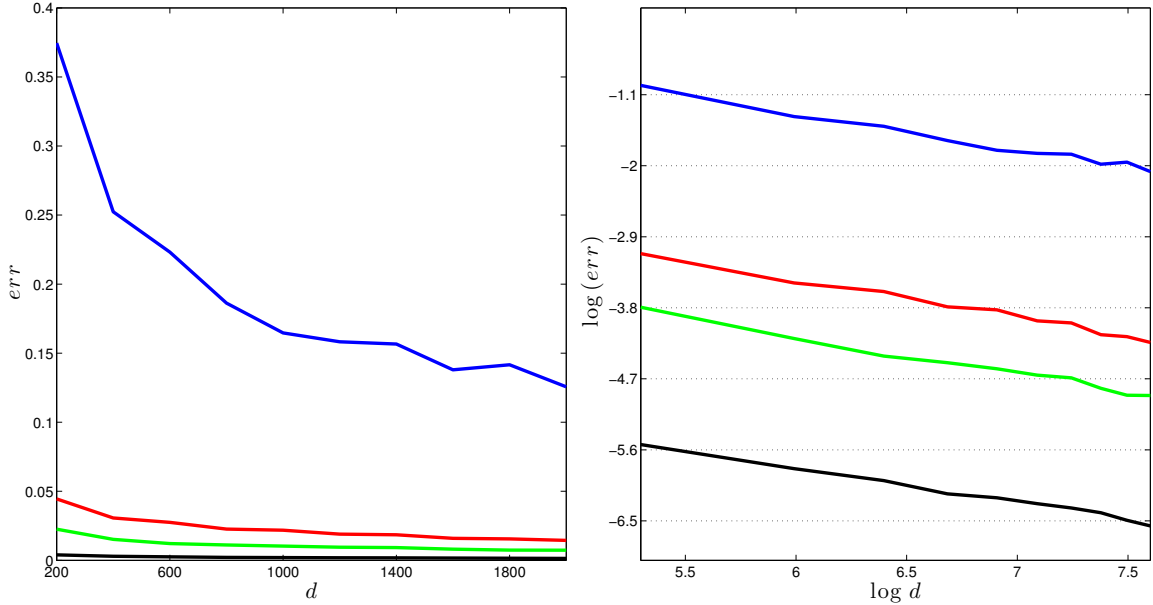}
    \caption{\textit{The left plot shows the perturbation error of eigenvectors against matrix size $d$ ranging from $200$ to $2000$, with different eigengap $\gamma$. The right plot shows $\log(err)$ against $\log(d)$. The slope is around $-0.5$. Blue lines represent $\gamma = 10$; red lines $\gamma = 50$; green lines $\gamma = 100$; and black lines $\gamma = 500$. We report the largest error over $100$ runs.}}
    \label{ptbFig1}
\end{figure}

 The perturbation of eigenvectors is measured by the element-wise error:
\begin{equation*}
err := \max_{1 \le i \le r} \min_{\eta_i \in \{ \pm 1 \}} \| \eta_i\widetilde{v}_i - v_i \|_{\infty},
\end{equation*}
where $\{ \widetilde{v}_i \}_{i = 1}^r$ are the eigenvectors of $\widetilde{A} = A + E$ in the descending order.

To investigate how the error depends on $\gamma$ and $d$, we generate $E$ according to mechanism (a) with $s = 10, L = 3$, and run simulations in different parameter configurations:  (1) let the matrix size $d$ range from $200$ to $2000$, and choose the eigengap $\gamma$ in $\{10, 50, 100, 500\}$ (Figure \ref{ptbFig1});
(2) fix the product $\gamma \sqrt{d}$ to be one of $\{2000, 3000, 4000, 5000\}$, and let the matrix size $d$ run from $200$ to $2000$ (Figure \ref{ptbFig2}).

To find how the errors behave for $E$ generated from different methods, we run simulations as in (1) but generate $E$ differently. We construct $E$ through mechanism (a) with $L = 10, s=3$ and $L = 0.6, s = 50$, and also through mechanism (b) with $L' = 1.5, \rho = 0.9$ and $L' = 7.5, \rho = 0.5$ (Figure \ref{ptbFig3}). The parameters are chosen such that $\|E\|_{\infty}$ is about $30$.

\begin{figure}
    \centering
    \includegraphics[width=1\textwidth]{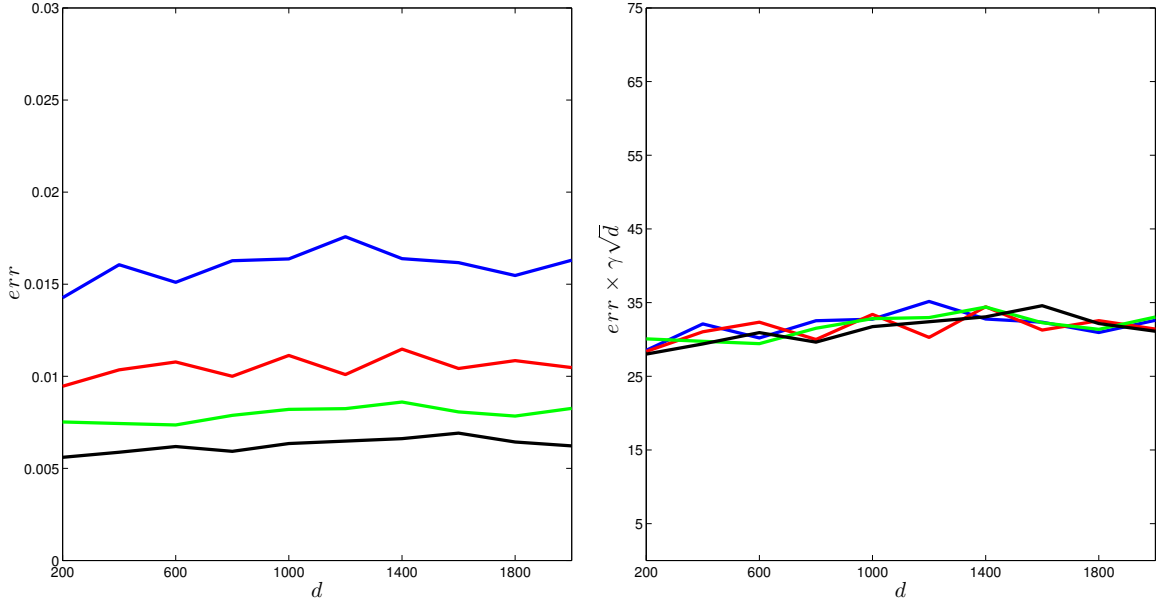}
    \caption{\textit{The left plot shows the perturbation error of eigenvectors against matrix size $d$ ranging from $200$ to $2000$, when $\gamma \sqrt{d}$ is kept fixed, with different values. The right plot shows the error multiplied by $\gamma \sqrt{d}$ against $d$. Blue lines represent $\gamma \sqrt{d} = 2000$; red lines $\gamma \sqrt{d} = 3000$; green lines $\gamma \sqrt{d} = 4000$; and black lines $\gamma \sqrt{d} = 5000$. We report the largest error over $100$ runs.}}
    \label{ptbFig2}
\end{figure}

\begin{figure}
    \centering
    \includegraphics[width=1\textwidth]{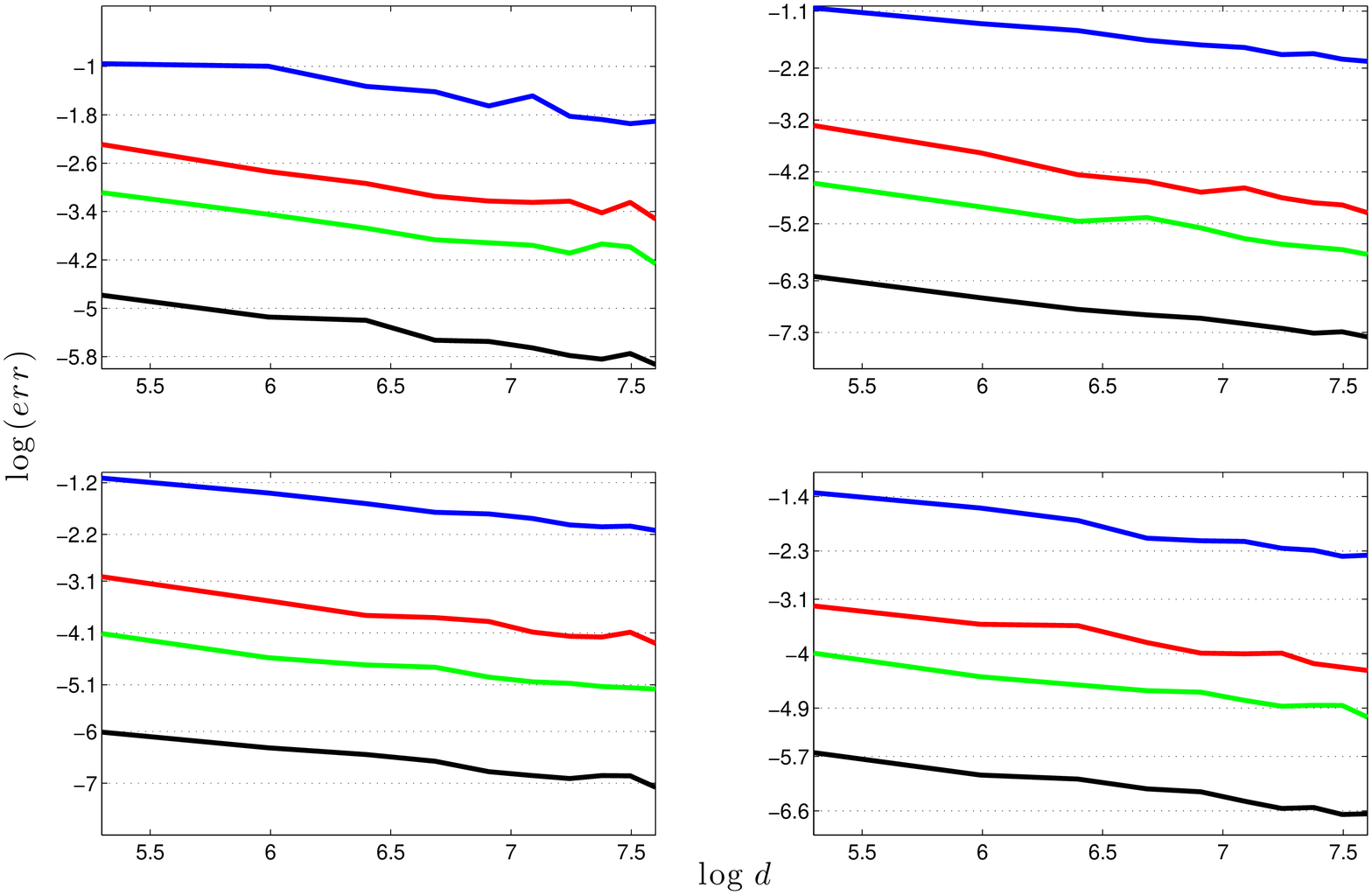}
    \caption{\textit{These plots show $\log(err)$ aginst $\log(d)$, with matrix size $d$ ranging from $200$ to $2000$ and different eigengap $\gamma$. The perturbation $E$ is generated from different ways. Top left: $L = 10, s = 3$; top right: $L = 0.6, s = 50$; bottom left: $L' = 1.5, \rho = 0.9$; bottom right: $L' = 7.5, \rho = 0.5$. The slopes are around $-0.5$. Blue lines represent $\gamma = 10$; red lines $\gamma = 50$; green lines $\gamma = 100$; and black lines $\gamma = 500$. We report the largest error over $100$ runs.}}
    \label{ptbFig3}
\end{figure}

In Figure \ref{ptbFig1} -- \ref{ptbFig3}, we report the largest  error based on $100$ runs. Figure \ref{ptbFig1} shows that the error decreases as $d$ increases (the left plot); and moreover, the logarithm of the error is linear in $\log(d)$, with a slope $-0.5$, that is, $err \propto 1/\sqrt{d}$ (the right plot). We can take the eigengap $\gamma$ into consideration and characterize the relationship in a more refined way. In Figure \ref{ptbFig2}, it is clear that $err$ almost falls on the same horizontal line for different configurations of $d$ and $\gamma$, with $\gamma\sqrt{d}$ fixed. The right panel clearly indicates that $err \times \gamma\sqrt{d}$ is a constant, and therefore $err \propto 1/(\gamma\sqrt{d})$. In Figure \ref{ptbFig3}, we find that the errors behave almost the same regardless of how $E$ is generated.
These simulation results provide stark evidence supporting the $\ell_{\infty}$ perturbation bound in Theorem \ref{thm::symindiv}.

\subsection{Simulation: robust covariance esitmation} \label{sec4.2}

We consider the performance of the generic POET procedure in robust covariance estimation in this subsection. Note that the procedure is flexible in employing any pilot estimators $\widehat\Sigma, \widehat\Lambda, \widehat V$ satisfying the conditions (\ref{suff1}) -- (\ref{suff3}) respectively.

We implemented the robust procedure with four different initial trios: (1) the sample covariance $\widehat{\Sigma}^S$ with its leading $r$ eigenvalues and eigenvectors as $\widehat\Lambda^S$ and $\widehat V^S$;
(2) the Huber's robust estimator $\widehat{\Sigma}^R$ given in (\ref{eqn:huber}) and its top $r$ eigen-structure estimators $\widehat\Lambda^R$ and $\widehat V^R$;
(3) the marginal Kendall's tau estimator $\widehat{\Sigma}^K$ with its corresponding $\widehat\Lambda^K$ and $\widehat V^K$;
(4) lastly, we use the spatial Kendall's tau estimator to estimate the leading eigenvectors instead of the marginal Kendall' tau, so $\widehat V^K$ in (3) is replaced with $\widetilde V^K$. We need to briefly review the two types of Kendall's tau estimators here, and specifically give the formula for $\widehat{\Sigma}^K$ and $\widetilde V^K$.

Kendall's tau correlation coefficient, for estimating pairwise comovement correlation, is defined as
\beq
\hat\tau_{jk} := \frac{2}{n(n-1)} \sum_{t < t'} \sgn((y_{tj} - y_{t'j})(y_{tk} - y_{t'k})) \,.
\eeq
Its population expectation is related to the Pearson correlation via the transform
$r_{jk} = \sin\Bigl(\frac{\pi}{2} \, E[\hat\tau_{jk}] \Bigr)$ for elliptical distributions (which are far too restrictive for high-dimensional applications). Then $\hat r_{jk} = \sin\Bigl(\frac{\pi}{2} \hat\tau_{jk} \Bigr)$ is a valid estimation for the Pearson correlation $r_{jk}$.
Letting $\widehat R = (\hat r_{jk})$  and $\widehat D = \diag(\sqrt{\widehat{\Sigma}_{11}^R}, \dots, \sqrt{\widehat{\Sigma}_{dd}^R})$ containing the robustly estimated standard deviations, we define the marginal Kendall's tau estimator as
\begin{equation}
\widehat{\Sigma}^K = \widehat D\, \widehat R\, \widehat D \,.
\end{equation}
In the above construction of $\widehat D$, we still use the robust variance estimates from $\widehat\Sigma^R$.

The spatial Kendall's tau estimator is a second-order U-statstic, defined as
\beq\label{eq::mkendall}
\widetilde{\Sigma}^K :=  \frac{2}{n(n-1)} \sum_{t < t'} \frac{(y_t- y_{t'})(y_t - y_{t'})^T}{\|y_t - y_{t'}\|_2^2} \,.
\eeq
Then $\widetilde V^S$ is constructed by the top $r$ eigenvectors of $\widetilde{\Sigma}^K$.
It has been shown by \cite{FanLiuWan17} that under elliptical distribution, $\widehat{\Sigma}^K$ and its top $r$ eigenvalues $\widehat\Lambda^K$ satisfy (\ref{suff1}) and (\ref{suff2}) while $\widetilde V^S$ suffices to conclude (\ref{suff3}). Hence Method (4) indeed provides good initial estimators if data are from elliptical distribution. However, since $\widehat{\Sigma}^K$ attains (\ref{suff1}) for elliptical distribution, by similar argument for deriving Proposition \ref{prop3.3} based on our $\ell_{\infty}$ pertubation bound, $\widehat V^K$ consisting of the leading eigenvectors of $\widehat{\Sigma}^K$ is also valid for the generic POET procedure. For more details about the two types of Kendall's tau, we refer the readers to \cite{FanKotNg90, ChoMar98, HanLiu14, FanLiuWan17} and references therein.

In summary, Method (1) is designed for the case of sub-Gaussian data; Method (3) and (4) work under the situation of elliptical distribution; while Method (2) is proposed in this paper for the general heavy-tailed case with bounded fourth moments without further distributional shape constraints.

We simulated $n$ samples of $(f_t^T, u_t^T)^T$ from two settings: (a) a multivariate t-distribution with covariance matrix diag$\{ I_r, 5 I_d \}$ and various degrees of freedom ($\nu = 3$ for very heavy tail, $\nu = 5$ for medium heavy tail and $\nu = \infty$ for Gaussian tail), which is one example of the elliptical distribution \citep{FanKotNg90}; (b) an element-wise iid one-dimensional t distribution with the same covariance matrix and degrees of freedom $\nu = 3, 5$ and $\infty$, which is a non-elliptical heavy-tailed distribution.

Each row of coefficient matrix $B$ is independently sampled from a standard normal distribution, so that with high probability, the pervasiveness condition holds with $\|B\|_{\max} = O(\sqrt{\log d})$. The data is then generated by $y_t = B f_t + u_t$ and the true population covariance matrix is $\Sigma = B B^T + 5 I_d$.

\begin{figure}
\begin{center}
\includegraphics[scale = 0.85]{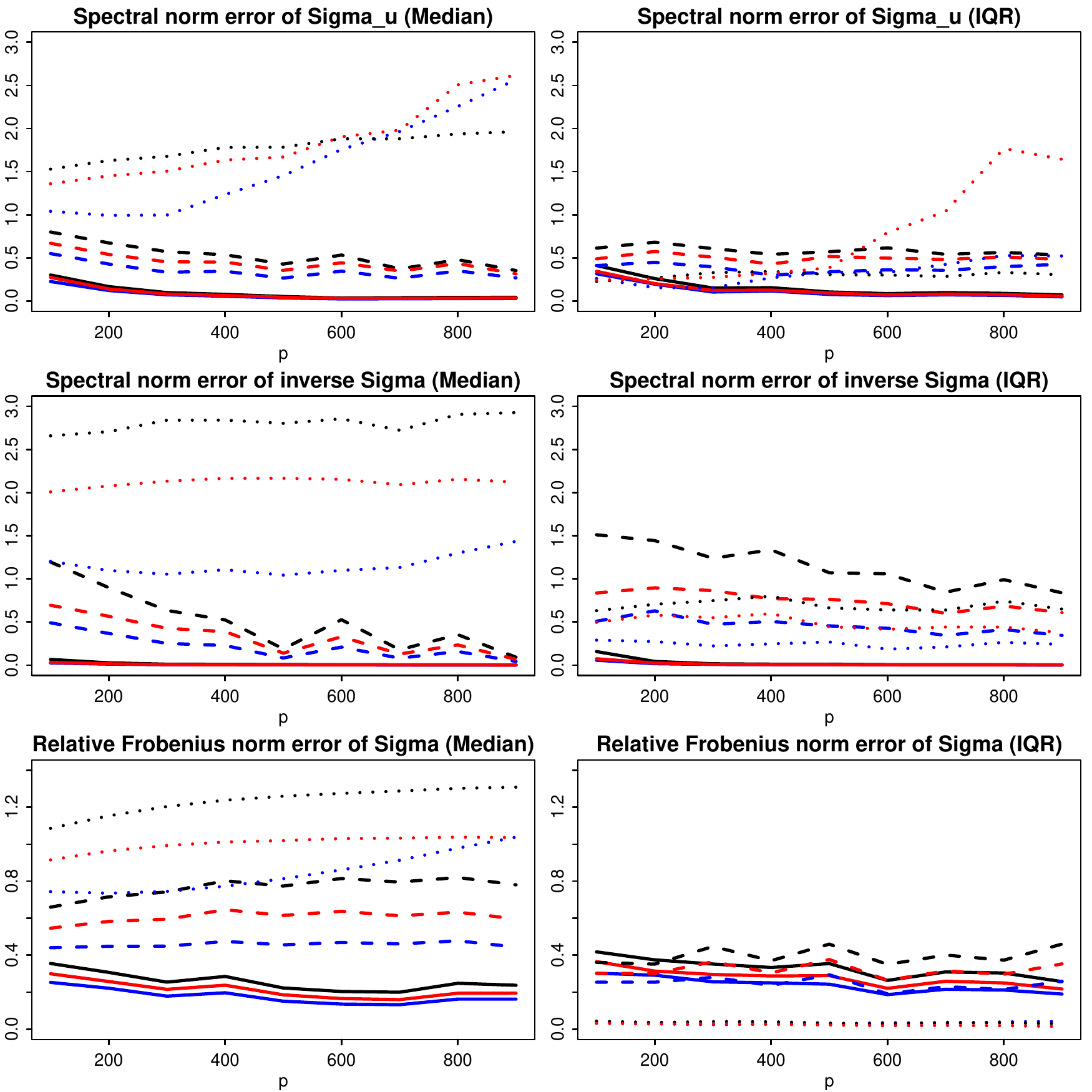}
\end{center}
\caption{\textit{Error ratios of robust estimates against varying dimension. Blue lines represent errors of Method (2) over Method (1) under different norms; black lines errors of Method (3) over Method (1); red lines errors of Method (4) over Method (1). $(f_t^T, u_t^T)$ is generated by multivariate t-distribution with $df = 3$ (solid), $5$ (dashed) and $\infty$ (dotted). The median errors and their IQR's (interquartile range) over $100$ simulations are reported. }}
\label{figure::exp1}
\end{figure}

For $d$ running from $200$ to $900$ and $n = d/2$, we calculated errors of the four robust estimators in different norms.
The tuning for $\alpha$ in minimization (\ref{eqn:huber}) is discussed more throughly in \cite{FanWanZho17}.
For the thresholding parameter, we used $\tau = 2 \sqrt{\log d/n}$.
The estimation errors are gauged in the following norms: $\| \widehat{\Sigma}_u^{\top} - \Sigma_u \|$, $\| (\widehat\Sigma^{\top})^{-1} - {\Sigma}^{-1} \|$ and $\|\widehat{\Sigma}^{\top} - \Sigma\|_{\Sigma}$ as shown in Theorem \ref{suff}.
The two different settings are separately plotted in Figures \ref{figure::exp1} and \ref{figure::exp2}. The estimation errors of applying sample covariance matrix $\widehat{\Sigma}^S$ in Method (1) are used as the baseline for comparison. For example, if relative Frobenius norm is used to measure performance, $\|(\widehat \Sigma^{\top})^{(k)} - {\Sigma}\|_{\Sigma}/\|(\widehat{\Sigma}^{\top})^{(1)} - {\Sigma}\|_{\Sigma}$ will be depicted for $k = 2, 3, 4$, where $(\widehat \Sigma^{\top})^{(k)}$ are generic POET estimators based on Method ($k$). Therefore if the ratio curve moves below $1$, the method is better than naive sample estimator \citep{FanLiaMin13} and vice versa. The more it gets below $1$, the more robust the procedure is against heavy-tailed randomness.

The first setting (Figure \ref{figure::exp1}) represents a heavy-tailed elliptical distribution, where we expect Methods (2), (3), (4) all outperform the POET estimator based on the sample covariance, i.e. Method (1), especially in the presence of extremely heavy tails (solid lines for $\nu = 3$). As expected, all three curves under various measures show error ratios visibly smaller than $1$. On the other hand, if data are indeed Gaussian (dotted line for $\nu = \infty$), Method (1) has better behavior under most measures (error ratios are greater than $1$). Nevertheless, our robust Method (2) still performs comparably well with Method (1), whereas the median error ratios for the two Kendall's tau methods are much worse. In addition, the IQR (interquartile range) plots reveal that Method (2) is indeed more stable than two Kendall's tau Methods (3) and (4). It is also noteworthy that Method (4), which leverages the advantage of spatial Kendall's tau, performs more robustly than Method (3), which solely base its estimation of the eigen-structure on marginal Kendall's tau.

 \begin{figure}
\begin{center}
\includegraphics[scale = 0.85]{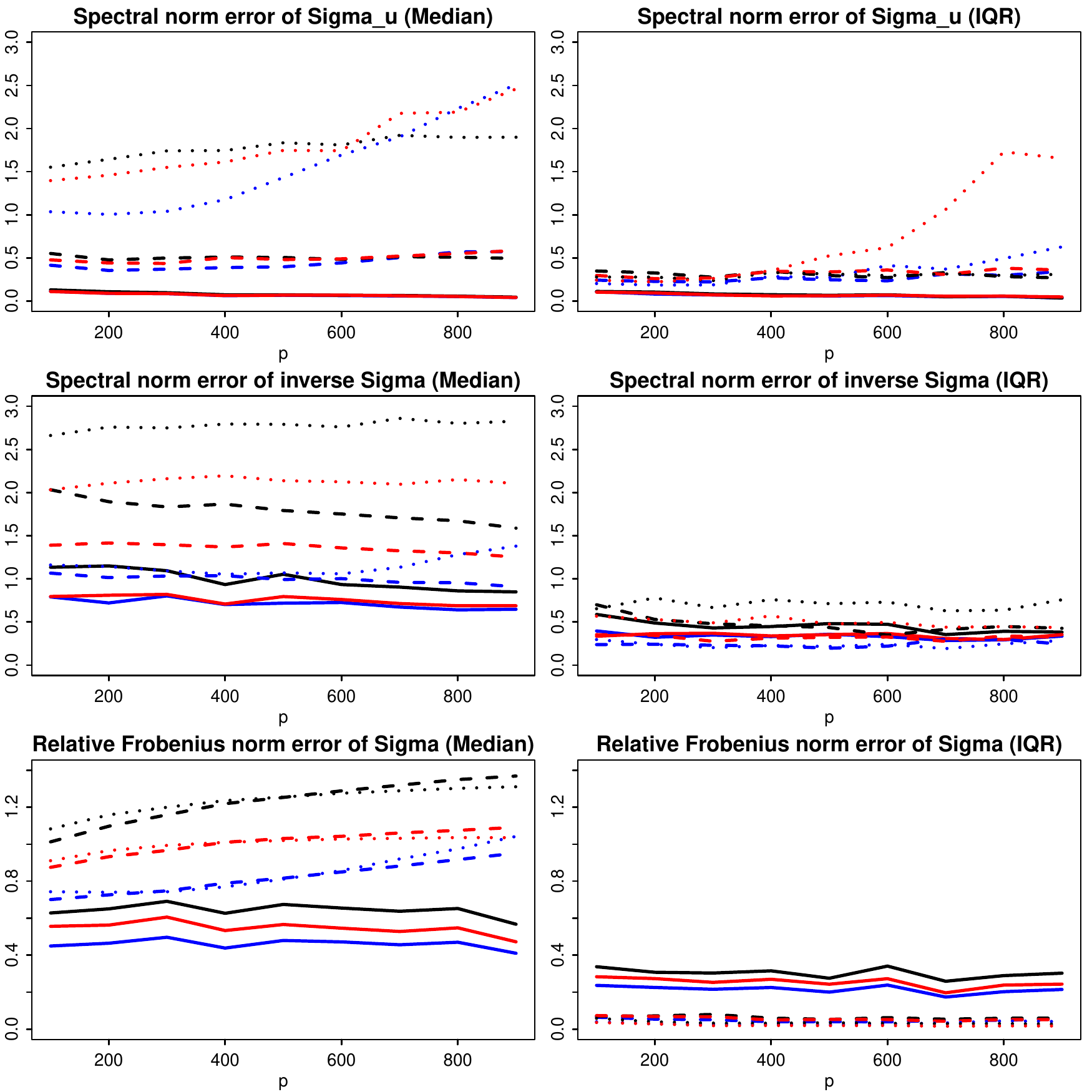}
\end{center}
\caption{\textit{Error ratios of robust estimates against varying dimension. Blue lines represent errors of Method (2) over Method (1) under different norms; black lines errors of Method (3) over Method (1); red lines errors of Method (4) over Method (1). $(f_t^T, u_t^T)$ is generated by element-wise iid t-distribution with $df = 3$ (solid), $5$ (dashed) and $\infty$ (dotted). The median errors and their IQR's (interquartile range) over $100$ simulations are reported.}}
\label{figure::exp2}
\end{figure}

The second setting (Figure \ref{figure::exp2}) provides an example of non-elliptical distributed data. We can see that the performance of the general robust Method (2) dominates the other three methods, which verifies the benefit of robust estimation for a general heavy-tailed distribution. Note that Kendall's tau methods do not apply to distributions outside the elliptical family, excluding even the element-wise iid $t$ distribution in this setting. Nonetheless, even in the first setting where the data are indeed elliptical, with proper tuning, the proposed robust method can still outperform Kendall's tau by a clear margin.

\section{Proof Organization of Main Theorems}\label{sec::org}
\subsection{Symmetric Case}
For shorthand, we write $\tau  = \| E \|_{\infty}$, and $\kappa = \sqrt{d}\,\| E V \|_{\max}$. An obvious bound for $\kappa$ is $\kappa \le \sqrt{r \mu}\, \tau$ (by Cauchy-Schwarz inequality). We will use these notations throughout this subsection.

Recall the spectral decomposition of $A$ in (\ref{eq:specA}). Expressing $E $ in terms of the column vectors of $V$ and $V_\bot$, which form an orthogonal basis in $\mathbb{R}^n$, we write
\begin{equation}\label{def::Eblock}
[V,V_\bot]^T E [V,V_\bot] =:  \left( \begin{array}{cc}
E_{11} & E_{12} \\ E_{21} & E_{22} \end{array} \right)\,.
\end{equation}
Note that $E_{12} = E_{21}^T$ since $E$ is symmetric. Conceptually, the perturbation results in a rotation of $[V,V_\bot]$, and we write a candidate orthogonal basis as follows:
\begin{equation}\label{def::Vbar}
\overline{V} := (V + V_\bot Q)(I_r + Q^T Q)^{-1/2}, \qquad \overline{V}_\bot := (V_\bot - VQ^T) (I_{d-r} + QQ^T)^{-1/2},
\end{equation}
where $Q \in \mathbb{R}^{(d-r) \times r}$ is to be determined. It is straightforward to check that $[\overline{V}, \overline{V}_\bot]$ is an orthogonal matrix. We will choose $Q$ in a way such that $(\overline{V}, \overline{V}_\bot)^T \widetilde{A} (\overline{V}, \overline{V}_\bot)$ is a block diagonal matrix, i.e., $\overline{V}_\bot^T \widetilde{A} \overline{V} = 0$. Substituting (\ref{def::Vbar}) and simplifying the equation, we obtain
\begin{equation}\label{eqn:1}
 Q(\Lambda_1 + E_{11}) - (\Lambda_2 + E_{22}) Q = E_{21} - QE_{12}Q.
\end{equation}
The approach of studying perturbation through a quadratic equation is known (see \cite{Ste90} for example). Yet, to the best of our knowledge, existing results study perturbation under orthogonal-invariant norms (or unitary-invariant norms in the complex case), which includes a family of matrix operator norms and Frobenius norm, but excludes the matrix max-norm. The advantages of orthogonal-invariant norms are pronounced: such norms of a symmetric matrix only depend on its eigenvalues regardless of eigenvectors; moreover, with suitable normalization they are \textit{consistent} in the sense $\| A B \| \le \| A \| \cdot \| B \|$. See \cite{Ste90} for a clear exposition.

The max-norm, however, does not possess these important properties. An imminent issue is that it is not clear how to relate $Q$ to $V_\bot Q$, which will appear in (\ref{eqn:1}) after expanding $E$ according to (\ref{def::Eblock}), and which we want to control. Our approach here is to study $\overline{Q} := V_\bot Q$ directly through a transformed quadratic equation, obtained by left multiplying $V_\bot$ to (\ref{eqn:1}). Denote $H = V_\bot E_{21}, \overline{Q} = V_\bot Q, \overline{L}_1 = \Lambda_1 + E_{11}, \overline{L}_2 =  V_\bot (\Lambda_2 + E_{22}) V_\bot^T$. If we can find an appropriate matrix $\overline{Q}$ with $\overline{Q} = V_\bot Q$, and it satisfies the quadratic equation
\begin{equation}\label{eqn:2}
\overline{Q}\, \overline{L}_1 - \overline{L}_2 \overline{Q} = H - \overline{Q}H^T\overline{Q},
\end{equation}
then $Q$ also satisfies the quadratic equation (\ref{eqn:1}). This is because multiplying both sides of (\ref{eqn:2}) by $V_\bot^T$ yields (\ref{eqn:1}), and thus any solution $\overline{Q}$ to (\ref{eqn:2}) with the form $\overline{Q} = V_\bot Q$ must result in a solution $Q$ to (\ref{eqn:1}).

Once we have such $\overline{Q}$ (or equivalently $Q$), then $(\overline{V}, \overline{V}_\bot)^T \widetilde{A} (\overline{V}, \overline{V}_\bot)$ is a block diagonal matrix, and the span of column vectors of $\overline{V}$ is a candidate space of the span of first $r$ eigenvectors, namely $\mathrm{span}\{\widetilde{v}_1,\ldots, \widetilde{v}_r \}$. We will verify the two spaces are identical in Lemma \ref{lem::match}. Before stating that lemma, we first provide bounds on $\| \overline{Q} \|_{\max}$ and $ \| \overline{V} - V \|_{\max}$.
\begin{lem}\label{lem::Qbar}
Suppose $|\lambda_r| - \varepsilon > 4r \mu(\tau + 2r \kappa)$. Then, there exists a matrix $Q \in \mathbb{R}^{(d - r) \times r}$ such that $\overline{Q} = V_\bot Q \in \mathbb{R}^{d \times r}$ is a solution to the quadratic equation (\ref{eqn:2}), and $\overline{Q}$ satisfies $\| \overline{Q} \|_{\max} \le \omega / \sqrt{d}$. Moreover, if $r \omega < 1/2$, the matrix $\overline{V}$ defined in (\ref{def::Vbar}) satisfies
\begin{equation}\label{ineqn::Vbarbound}
\| \overline{V} - V \|_{\max} \le  2\sqrt{\mu}\, \omega r  / \sqrt{d}\,.
\end{equation}
Here, $\omega$ is defined as $ \omega = 8(1+r\mu) \kappa /  (|\lambda_r| - \varepsilon) $.
\end{lem}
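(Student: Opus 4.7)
My plan is to construct $\overline Q$ by a Banach fixed-point argument on the quadratic equation (\ref{eqn:2}) in a ball of the weighted max-norm $\sqrt d \|\cdot\|_{\max}$, and then convert the resulting bound into an estimate on $\overline V - V$ by Taylor-expanding $(I_r + Q^T Q)^{-1/2}$. The work splits into (i) a linear Sylvester solver in the max-norm, (ii) a contraction argument for the quadratic map, and (iii) algebraic post-processing.

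\textbf{Linear Sylvester solver.} I would first treat the linear model problem: given $G$ in the column space of $V_\bot$, solve $X\overline{L}_1 - \overline{L}_2 X = G$ for $X$ also in $\mathrm{col\,sp}(V_\bot)$. Iterating $X = G\overline{L}_1^{-1} + \overline{L}_2 X \overline{L}_1^{-1}$ yields the Neumann series $X = \sum_{k\ge 0}\overline{L}_2^k G \overline{L}_1^{-k-1}$. Using the submultiplicative estimate $\|ABC\|_{\max}\le \|A\|_\infty \|B\|_{\max}\|C\|_1$, each term is bounded by $\|\overline{L}_2\|_\infty^k \|G\|_{\max}\|\overline{L}_1^{-1}\|_1^{k+1}$. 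Because $\overline{L}_1 = \Lambda_1+E_{11}$ is $r\times r$, $\|\overline{L}_1^{-1}\|_1 \le r\|\overline{L}_1^{-1}\|_2 \le r/(|\lambda_r|-\tau)$; and writing $\overline{L}_2 = (A-A_r) + (I-VV^T)E(I-VV^T)$ with the incoherence bound $\|VV^T\|_\infty \le r\mu$ gives $\|\overline{L}_2\|_\infty \le \varepsilon + (1+r\mu)^2\tau$. The hypothesis $|\lambda_r|-\varepsilon > 4r\mu(\tau+2r\kappa)$ is strong enough to make the geometric series converge and to yield
\[
\sqrt d\,\|X\|_{\max} \;\lesssim\; \frac{(1+r\mu)\,\sqrt d\,\|G\|_{\max}}{|\lambda_r|-\varepsilon}.
\]

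\textbf{Fixed-point iteration.} Define $\Phi(\overline Q)$ to be the above solution with $G := H - \overline Q H^T \overline Q$. A direct count using the incoherent split $H = (I-VV^T)EV$ gives $\sqrt d\|H\|_{\max}\lesssim \kappa$, and when $\sqrt d\|\overline Q\|_{\max}\le\omega$ the quadratic piece obeys $\sqrt d\|\overline Q H^T\overline Q\|_{\max}\le r d\|\overline Q\|_{\max}^2\cdot\sqrt d\|H\|_{\max}\lesssim r\omega^2\kappa$. Substituting into the linear estimate and choosing $\omega = 8(1+r\mu)\kappa/(|\lambda_r|-\varepsilon)$ as in the statement shows that $\Phi$ maps the ball $B_\omega := \{\overline Q \in \mathrm{col\,sp}(V_\bot):\sqrt d\|\overline Q\|_{\max}\le\omega\}$ into itself. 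The contraction property comes from the identity
\[
\overline Q_1 H^T\overline Q_1 - \overline Q_2 H^T\overline Q_2 \;=\; (\overline Q_1-\overline Q_2)H^T\overline Q_1 + \overline Q_2 H^T(\overline Q_1-\overline Q_2),
\]
which, combined with Step 1, yields a Lipschitz constant of order $r\omega^2$ on $B_\omega$; the hypothesis $r\omega<1/2$ makes this strictly less than $1$, so Banach's theorem produces $\overline Q$ with $\sqrt d\|\overline Q\|_{\max}\le\omega$.

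\textbf{Conversion to $\overline V - V$.} Setting $f(X) := (I_r+X)^{-1/2}-I_r$, one has $\overline V - V = V f(Q^TQ) + \overline Q + \overline Q f(Q^TQ)$. From $Q^TQ = \overline Q^T\overline Q$, $\|Q^TQ\|_{\max}\le d\|\overline Q\|_{\max}^2\le \omega^2$; Taylor-expanding $f$ together with the $r\times r$ bound $\|X^k\|_{\max}\le r^{k-1}\|X\|_{\max}^k$ gives $\|f(Q^TQ)\|_{\max}\lesssim \omega^2$ as long as $r\omega^2<1$. The incoherence bound $\|V\|_{\max}\le\sqrt{r\mu/d}$ then produces $\|V f(Q^TQ)\|_{\max}\lesssim r^{3/2}\sqrt\mu\,\omega^2/\sqrt d$, while $\|\overline Q + \overline Q f(Q^TQ)\|_{\max}\le\|\overline Q\|_{\max}(1+r\omega^2)\le 2\omega/\sqrt d$. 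Summing and using $r\omega<1/2$ to absorb the quadratic term into a linear one yields the claimed bound $2\sqrt\mu\,r\omega/\sqrt d$.

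\textbf{Main obstacle.} The crux is Step 1. Classical treatments of Sylvester equations (e.g.\ Stewart) unitarily diagonalize $\overline{L}_1,\overline{L}_2$ and work in orthogonally invariant norms, where such diagonalization is harmless; the max-norm is \emph{not} orthogonally invariant, so that route is unavailable. In its place one must carefully control the interplay between $\|\overline{L}_2\|_\infty$, $\|\overline{L}_1^{-1}\|_1$, and the incoherence of $V$, and these are precisely where the $r\mu$ factors in the gap hypothesis enter. Once Step 1 is in place, Steps 2 and 3 are routine bookkeeping.
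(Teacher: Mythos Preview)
Your overall scaffold---fixed-point iteration on the quadratic equation followed by Taylor expansion of $(I_r+Q^TQ)^{-1/2}$---matches the paper exactly, and Steps 2 and 3 are essentially the paper's argument (the paper packages Step 2 as an abstract fixed-point lemma of Stewart type, and Step 3 as a separate computation). The difficulty is your Step 1, where the Neumann series route introduces a loss that the stated hypothesis cannot absorb.

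Concretely: your series $\sum_{k\ge 0}\overline{L}_2^{\,k}G\,\overline{L}_1^{-k-1}$ converges in max-norm only when $\|\overline{L}_2\|_\infty\,\|\overline{L}_1^{-1}\|_1<1$. With your own bounds this reads
\[
\big(\varepsilon+(1+r\mu)^2\tau\big)\cdot\frac{\sqrt r}{|\lambda_r|-\tau}\;<\;1
\]
(the $r\times r$ passage $\|\cdot\|_1\le\sqrt r\,\|\cdot\|_2$ cannot in general be sharpened, and you wrote the looser factor $r$). This forces $|\lambda_r|\gtrsim\sqrt r\,\varepsilon$, which is \emph{not} implied by the lemma's hypothesis $|\lambda_r|-\varepsilon>4r\mu(\tau+2r\kappa)$: take $r\ge 2$ and $\varepsilon$ only slightly below $|\lambda_r|$. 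Even when the series does converge, summing it gives $\|X\|_{\max}\lesssim \sqrt r\,\|G\|_{\max}/(|\lambda_r|-\sqrt r\,\varepsilon)$ rather than the $\|G\|_{\max}/(|\lambda_r|-\varepsilon)$ you need.

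The paper sidesteps this by never inverting $\overline{L}_1$ on its own. Instead it lower-bounds the full Sylvester operator directly: for $\|Q_0\|_{\max}=1$ one writes $TQ_0=Q_0\Lambda_1+Q_0E_{11}-\overline{L}_2 Q_0$ and exploits that $\Lambda_1$ is \emph{diagonal}, so that $\|Q_0\Lambda_1\|_{\max}\ge|\lambda_r|$ with no $r$-loss whatsoever. Bounding the two correction terms (one gets $\|\overline{L}_2 Q_0\|_{\max}\le\varepsilon+(1+r\mu)(\tau+r\kappa)$ and $\|Q_0E_{11}\|_{\max}\le r\sqrt\mu\,\kappa$) then yields $\|T^{-1}\|_{\max}^{-1}\ge|\lambda_r|-\varepsilon-3r\mu(\tau+r\kappa)$, which is exactly what the hypothesis delivers. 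If you prefer to keep a Neumann series, iterate on $\Lambda_1^{-1}$ rather than $\overline{L}_1^{-1}$ (moving $Q_0E_{11}$ to the right-hand side), since $\|\Lambda_1^{-1}\|_1=1/|\lambda_r|$ carries no $\sqrt r$; but the direct lower bound is cleaner and is what the paper does.
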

The second claim of the lemma (i.e., the bound (\ref{ineqn::Vbarbound})) is relatively easy to prove once the first claim (i.e., the bound on $\| \overline{Q} \|_{\max}$) is proved. To understand this, note that we can rewrite $\overline{V}$ as $\overline{V} = (V + \overline{Q})(I_r + \overline{Q}^T \overline{Q})^{-1/2}$, and $\| \overline{Q}^T \overline{Q} \|_{\max}$ can be controlled by a trivial inequality $\| \overline{Q}^T \overline{Q} \|_{\max} \le d\| \overline{Q} \|_{\max}^2 \le w^2$. To prove the first claim, we construct a sequence of matrices through recursion that converges to the fixed point $\overline{Q}$, which is a solution to the quadratic equation (\ref{eqn:2}). For all iterates of matrices, we prove a uniform max-norm bound, which leads to a max-bound on $\| \overline{Q} \|_{\max}$ by continuity. To be specific, we initialize $\overline{Q}^0 = 0$, and given $\overline{Q}^t$, we solve a linear equation:
\begin{equation}\label{eqn::linear}
\overline{Q}\, \overline{L}_1 - \overline{L}_2 \overline{Q} = H - \overline{Q}^tH^T\overline{Q}^t,
\end{equation}
and the solution is defined as $\overline{Q}^{t+1}$. Under some conditions, the iterate $\overline{Q}^t$ converges to a limit $\overline{Q}$, which is a solution to (\ref{eqn:2}). The next general lemma captures this idea. It follows from \cite{Ste90} with minor adaptations.
\begin{lem}\label{lem::borrow}
Let $T$ be a bounded linear operator on a Banach space $\mathcal{B}$ equipped with a norm $\| \cdot \|$. Assume that $T$ has a bounded inverse, and define $\beta = \| T^{-1} \|^{-1}$. Let $\varphi: \mathcal{B} \to \mathcal{B}$ be a map that satisfies
\begin{equation}\label{ineqn::phi}
\| \varphi(x) \| \le \eta \|x\|^2, \qquad \text{and} \qquad \| \varphi(x) - \varphi(y) \| \le 2 \eta \max \{ \|x\|, \|y\| \} \|x - y\|
\end{equation}
for some $\eta \ge 0$. Suppose that $\mathcal{B}_0$ is a closed subspace of $\mathcal{B}$ such that $T^{-1}(\mathcal{B}_0) \subseteq \mathcal{B}_0$ and $\varphi(\mathcal{B}_0) \subseteq \mathcal{B}_0$. Suppose $y \in \mathcal{B}_0$ that satisfies $4 \eta \|y \| < \beta^2$. Then, the sequence initialized with $x_0 = 0$ and iterated through
\begin{equation} \label{eqn::seq}
x_{k+1} = T^{-1}(y + \varphi(x_k)), \quad k \ge 0
\end{equation}
converges to a solution $x^{\star}$ to $Tx = y + \varphi(x)$. Moreover, we have $x^{\star} \subseteq \mathcal{B}_0$, and $\| x^{\star} \| \le 2 \| y \| / \beta$.
\end{lem}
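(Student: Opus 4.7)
The plan is to recognize this as a Banach fixed-point argument applied to the map $F(x) := T^{-1}(y + \varphi(x))$, whose fixed points are exactly the solutions to $Tx = y + \varphi(x)$. The real work is to find a closed set inside $\mathcal{B}_0$ on which $F$ is simultaneously a self-map and a strict contraction; once that is in place, the iteration (\ref{eqn::seq}) is standard Picard iteration, so convergence is automatic, and the bound $\|x^\star\| \le 2\|y\|/\beta$ will fall out of the radius of the ball we choose.

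First I would pin down the correct radius. Since $\|F(x)\| \le \beta^{-1}(\|y\| + \eta \|x\|^2)$, the self-mapping condition on the closed ball $B_\rho := \{ x \in \mathcal{B}_0 : \|x\| \le \rho\}$ reduces to the quadratic inequality $\eta \rho^2 - \beta \rho + \|y\| \le 0$. The hypothesis $4\eta\|y\| < \beta^2$ is exactly what is needed to produce two real roots, and taking $\rho$ to be the smaller root
\[
\rho_- \;=\; \frac{\beta - \sqrt{\beta^2 - 4\eta\|y\|}}{2\eta} \;=\; \frac{2\|y\|}{\beta + \sqrt{\beta^2 - 4\eta \|y\|}} \;\le\; \frac{2\|y\|}{\beta}
\]
yields both the invariance of $B_\rho$ under $F$ and the final bound claimed in the lemma. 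Membership in $\mathcal{B}_0$ propagates by induction: $x_0 = 0 \in \mathcal{B}_0$, and the hypotheses $y \in \mathcal{B}_0$, $\varphi(\mathcal{B}_0)\subseteq \mathcal{B}_0$, and $T^{-1}(\mathcal{B}_0)\subseteq \mathcal{B}_0$ push this property forward through each step of (\ref{eqn::seq}).

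Next I would verify the contraction property. For $x, x' \in B_\rho$, the quadratic Lipschitz estimate (\ref{ineqn::phi}) gives
\[
\| F(x) - F(x') \| \;\le\; \beta^{-1} \| \varphi(x) - \varphi(x') \| \;\le\; \frac{2\eta \rho}{\beta}\, \| x - x' \|,
\]
and the coefficient satisfies $2\eta\rho/\beta \le 4\eta\|y\|/\beta^2 < 1$ by hypothesis. Thus $F$ is a strict contraction on $B_\rho$, which, being closed in the Banach space $\mathcal{B}$, is itself complete. The Banach fixed-point theorem then produces a unique limit $x^\star \in B_\rho \subseteq \mathcal{B}_0$ of the iterates $x_k$, satisfying $x^\star = F(x^\star)$, i.e.\ $Tx^\star = y + \varphi(x^\star)$, together with $\|x^\star\| \le \rho_- \le 2\|y\|/\beta$.

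I do not anticipate a serious obstacle: the argument is essentially the textbook contraction proof. The only place requiring mild care is the quadratic-root bookkeeping that converts the strict inequality $4\eta\|y\| < \beta^2$ into a radius $\rho$ that simultaneously delivers self-mapping, strict contraction, and the announced norm bound. Everything else---invariance of $\mathcal{B}_0$ and completeness of $B_\rho$---is immediate from the assumptions.
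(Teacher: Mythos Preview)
Your proposal is correct and takes essentially the same approach as the paper: the paper also iterates $x_{k+1} = T^{-1}(y+\varphi(x_k))$, bounds all iterates by the smaller root $\xi_\star = 2\|y\|/(\beta+\sqrt{\beta^2-4\eta\|y\|}) \le 2\|y\|/\beta$ of the same quadratic, and obtains convergence via the contraction estimate $\|x_{k+1}-x_k\| \le (4\eta\|y\|/\beta^2)\|x_k-x_{k-1}\|$. The only cosmetic difference is that the paper proves the Cauchy property by hand (via a majorizing scalar sequence $\xi_k$) rather than invoking the Banach fixed-point theorem by name, but the computations and the resulting bound are identical.
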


To apply this lemma to the equation (\ref{eqn:2}), we view $\mathcal{B}$ as the space of matrices $\mathbb{R}^{d \times r}$ with the max-norm $\| \cdot \|_{\max}$, and $\mathcal{B}_0$ as the subspace of matrices of the form $V_\bot Q$ where $Q \in \mathbb{R}^{(d-r) \times r}$. The linear operator $T$ is set to be the $T (\overline{Q}) = \overline{Q}\, \overline{L}_1 - \overline{L}_2 \overline{Q}$, and the map $\varphi$ is set to be the quadratic function $\varphi( \overline{Q} ) = -\overline{Q}H^T\overline{Q}$. Roughly speaking, under the assumption of Lemma \ref{lem::borrow}, the nonlinear effect caused by $\varphi$ is weak compared with the linear operator $T$. Therefore, it is crucial to show $T$ is invertible,  i.e. to give a good lower bound on $\|T^{-1}\|_{\max}^{-1} = \inf_{\|\overline{Q}\|_{\max}  = 1} \|T(\overline{Q}) \|_{\max}$. Since the norm is not orthogonal-invariant, a subtle issue arises when $A$ is not of exact low rank, which will be discussed at the end of the subsection.

If there is no perturbation (i.e., $E = 0$), all the iterates $\overline{Q}^t$ are simply $0$, so $\overline{V}$ is identical to $V$. If the perturbation is not too large, the next lemma shows that the column vectors of $\overline{V}$ span the same space as $\mathrm{span}\{ \widetilde{v}_1,\ldots, \widetilde{v}_r \}$.

In other words, with a suitable orthogonal matrix $R$, the columns of $\overline{V} R$ are $\widetilde{v}_1,\ldots, \widetilde{v}_r$.
\begin{lem}\label{lem::match}
Suppose $|\lambda_r| - \varepsilon > \max\{3 \tau, 64(1+r\mu) r^{3/2} \mu^{1/2} \kappa  \}$. Then, there exists an orthogonal matrix $R \in \mathbb{R}^{r \times r}$ such that the column vectors of $\overline{V} R$ are $\widetilde{v}_1,\ldots, \widetilde{v}_r$.
\end{lem}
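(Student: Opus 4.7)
The plan is to leverage the defining property of the matrix $\overline V$ produced by Lemma~\ref{lem::Qbar}: by construction, $Q$ satisfies the quadratic equation~\eqref{eqn:1}, which is precisely the condition that forces $\widetilde A$ to be block diagonal in the orthonormal basis $(\overline V, \overline V_\bot)$. Setting $M_1 := \overline V^T \widetilde A \overline V \in \mathbb{R}^{r\times r}$ and $M_2 := \overline V_\bot^T \widetilde A \overline V_\bot$, the spectrum of $\widetilde A$ decomposes as $\mathrm{spec}(M_1) \sqcup \mathrm{spec}(M_2)$, and every invariant subspace of $\widetilde A$ splits into an invariant piece of $M_1$ inside $\mathrm{span}(\overline V)$ and one of $M_2$ inside $\mathrm{span}(\overline V_\bot)$. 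It therefore suffices to show that the $r$ eigenvalues of $\widetilde A$ of largest absolute value come entirely from the $M_1$ block; the orthogonal $R$ will then be the one diagonalizing the symmetric $r\times r$ matrix $M_1$, followed if necessary by a signed permutation that aligns its columns with $\widetilde v_1,\dots,\widetilde v_r$.

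The first technical step is to simplify $M_1$ and $M_2$ into explicit similarities with small perturbations of $\Lambda_1$ and $\Lambda_2$. Writing the change-of-basis matrix
\[
[\overline V, \overline V_\bot] = [V, V_\bot]\begin{pmatrix} K_1 & -Q^T K_2 \\ Q K_1 & K_2\end{pmatrix},\quad K_1 := (I_r + Q^T Q)^{-1/2},\ K_2 := (I_{d-r} + Q Q^T)^{-1/2},
\]
and combining with the block expansion~\eqref{def::Eblock}, a direct computation yields
\[
M_1 = K_1\bigl[(\Lambda_1 + E_{11}) + E_{12} Q + Q^T E_{21} + Q^T(\Lambda_2 + E_{22}) Q\bigr] K_1.
\]
Substituting $(\Lambda_2 + E_{22})Q = Q(\Lambda_1 + E_{11}) - E_{21} + Q E_{12} Q$, rearranged from~\eqref{eqn:1}, the $Q^T E_{21}$ term cancels and the remaining pieces regroup as $(I + Q^T Q)(\Lambda_1 + E_{11} + E_{12} Q)$. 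Using $K_1(I + Q^T Q) = K_1^{-1}$, this collapses to
\[
M_1 = K_1^{-1}(\Lambda_1 + E_{11} + E_{12} Q)\,K_1, \qquad M_2 = K_2(\Lambda_2 + E_{22} - Q E_{12})\,K_2^{-1},
\]
where the second identity follows by the analogous manipulation on $M_2$. Thus $M_1$ is similar to $\Lambda_1 + E_{11} + E_{12} Q$ and $M_2$ is similar to $\Lambda_2 + E_{22} - Q E_{12}$.

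With these similarities in hand, Weyl's inequality controls the spectra. Using $\|E\|_2 \le \|E\|_\infty = \tau$ and $\|Q\|_2 = \|\overline Q\|_2 \le \|\overline Q\|_F \le \sqrt{r}\,\omega$ (from the max-norm bound in Lemma~\ref{lem::Qbar} together with $\|\overline Q\|_F \le \sqrt{dr}\,\|\overline Q\|_{\max}$), both additive perturbations have spectral norm at most $\tau(1 + \sqrt{r}\,\omega)$. The hypothesis $|\lambda_r| - \varepsilon > 64(1+r\mu) r^{3/2} \mu^{1/2}\kappa$ forces $\omega \le 1/(8 r^{3/2}\mu^{1/2})$, hence $\sqrt{r}\,\omega \le 1/8$ and the perturbation is bounded by $(9/8)\tau$. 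Since $|\lambda_j(\Lambda_1)| \ge |\lambda_r|$ for $j \le r$ and $|\lambda_j(\Lambda_2)| \le \varepsilon$ for all $j$, this yields $|\lambda(M_1)| \ge |\lambda_r| - (9/8)\tau$ and $|\lambda(M_2)| \le \varepsilon + (9/8)\tau$. The other hypothesis $|\lambda_r| - \varepsilon > 3\tau$ then gives strict separation $|\lambda(M_1)| > |\lambda(M_2)|$, so the top $r$ eigenvalues of $\widetilde A$ in absolute value are exactly those of $M_1$.

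Finally, because $M_1$ is a symmetric $r\times r$ matrix, it admits a spectral decomposition $M_1 = R D R^T$ with $R\in O(r)$ and $D$ diagonal. The columns of $\overline V R$ are orthonormal and satisfy $\widetilde A (\overline V R) = \overline V (M_1 R) = (\overline V R) D$, hence are eigenvectors of $\widetilde A$ for the eigenvalues in $D$; by the separation just established, these are precisely the top $r$ eigenvectors of $\widetilde A$ (by absolute value), and after composing $R$ with a signed permutation we obtain $\overline V R = [\widetilde v_1,\dots,\widetilde v_r]$. The main obstacle I anticipate is the algebraic reduction of $M_1$ and $M_2$: the quadratic identity~\eqref{eqn:1} must be invoked precisely to cancel $Q^T E_{21}$ against the hidden piece of $Q^T(\Lambda_2 + E_{22})Q$, and one has to track how the factor $(I + Q^T Q)$ merges with $K_1$ into $K_1^{-1}$. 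Once this bookkeeping is clean, the separation step is a short application of Weyl with the quantitative bounds from Lemma~\ref{lem::Qbar}.
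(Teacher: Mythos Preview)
Your proof is correct and takes a genuinely different route from the paper's argument. One small technical slip: the matrix $\Lambda_1 + E_{11} + E_{12}Q$ to which $M_1$ is similar is \emph{not} symmetric (the term $E_{12}Q$ need not equal its transpose $Q^TE_{21}$), so Weyl's inequality does not apply directly. What you actually need is the Bauer--Fike theorem: since $\Lambda_1$ is diagonal, every eigenvalue of $\Lambda_1 + (E_{11}+E_{12}Q)$ lies within $\|E_{11}+E_{12}Q\|_2$ of some $\lambda_i$, $i\le r$. This gives exactly the bound you wrote, so the argument goes through unchanged once you rename the tool. The same remark applies to the $M_2$ block.

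The paper's proof is quite different in spirit. Rather than computing $M_1$ and $M_2$ explicitly, it argues geometrically via $\sin\Theta$ distances: Davis--Kahan gives $d(\widetilde V, V) < 1/2$ from the hypothesis $|\lambda_r| - \varepsilon > 3\tau$, and the max-norm bound from Lemma~\ref{lem::Qbar} gives $d(\overline V, V) \le 2\sqrt{rd}\,\|\overline V - V\|_{\max} \le 4r^{3/2}\sqrt{\mu}\,\omega < 1/2$. The triangle inequality then forces $d(\widetilde V, \overline V) < 1$, and since $\mathrm{span}(\overline V)$ is an invariant subspace of $\widetilde A$, any mismatch with $\mathrm{span}(\widetilde v_1,\dots,\widetilde v_r)$ would produce an eigenvector of $\widetilde A$ in $\mathrm{span}(\overline V)$ orthogonal to all of $\widetilde v_1,\dots,\widetilde v_r$, contradicting $d(\widetilde V,\overline V) < 1$. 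Your approach trades this subspace-distance argument for an explicit algebraic reduction of the diagonal blocks via the quadratic equation~\eqref{eqn:1}; this is more hands-on but also more self-contained (it avoids Davis--Kahan entirely) and yields the clean similarity formulas $M_1 \sim \Lambda_1 + E_{11} + E_{12}Q$ and $M_2 \sim \Lambda_2 + E_{22} - QE_{12}$, which carry independent interest. The paper's route is shorter and purely geometric; yours is longer but gives sharper structural information about the blocks.
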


\begin{proof}[{\bf Proof of Theorem \ref{thm::symbulk}}]
It is easy to check that under the assumption of Theorem \ref{thm::symbulk}, the conditions required in Lemma \ref{lem::Qbar} and Lemma \ref{lem::match} are satisfied. Hence, the two lemmas imply Theorem \ref{thm::symbulk}.
\end{proof}

To study the perturbation of individual eigenvectors, we assume, in addition to the condition on $|\lambda_r|$, that $\lambda_1,\ldots, \lambda_r$ satisfy a uniform gap, (namely $\delta > \| E \|_2$). This additional assumption is necessary, because otherwise, the perturbation may lead to a change of relative order of eigenvalues, and we may be unable to match eigenvectors from the order of eigenvalues. Suppose $R \in \mathbb{R}^{r \times r}$ is an orthogonal matrix such that $\overline{V}R$ are eigenvectors of $\widetilde{A}$. Now, under the assumption of of Theorem \ref{thm::symbulk}, the column vectors of $\widetilde{V}$ and $\overline{V}R$ are identical up to sign, so we can rewrite the difference $\widetilde{V} - V$ as
\begin{equation}\label{eqn::Vdecomp}
\widetilde{V} - V = \overline{V}(R - I_r) + (\overline{V} -V).
\end{equation}
We already provided a bound on $\| \overline{V} -V \|_{\max}$ in Lemma \ref{lem::Qbar}. By the triangular inequality, we can derive a bound on $\| \overline{V} \|_{\max}$. If we can prove a bound on $\| R - I_r \|_{\max}$, it will finally leads to a bound on $\| \widetilde{V} - V \|_{\max}$. In order to do so, we use the Davis-Kahan theorem to obtain an bound on $\langle \widetilde{v}_i, v_i \rangle$ for all $i \in [r]$. This will lead to a max-norm bound on $R - I_r$ (with the price of potentially increasing the bound by a factor of $r$). The details about the proof of Theorem \ref{thm::symindiv} are in the appendix.

We remark that, we assume conditions on $|\lambda_r| - \epsilon$ in Theorem \ref{thm::symbulk} and Theorem \ref{thm::symindiv}, which are only useful in cases where $|\lambda_r| > \| A - A_r \|_\infty$. Ideally, we would like to have results with assumptions only involving $\lambda_r$ and $\lambda_{r+1}$, since Davis-Kahan theorem only requires a gap in neighboring eigenvalues. Unfortunately, unlike orthogonal-invariant norms that only depend on the eigenvalues of a matrix, the max-norm $\| \cdot \|_{\max}$ is not orthogonal-invariant, and thus it also depends on the eigenvectors of a matrix. For this reason, it is not clear whether we could obtain a lower bound on $\|T^{-1}\|_{\max}^{-1}$ using only the eigenvalues $\lambda_r$ and $\lambda_{r+1}$ so that we could apply Lemma \ref{lem::borrow}. The analysis appears to be difficult if we do not have a bound on $\| T^{-1} \|_{\max}^{-1}$, considering that even in the analysis of linear equations, we also need invertibility, condition numbers, etc.

\subsection{Asymmetric Case}\label{sec:orgasym}

Let $A^d, E^d$ be $d_1 + d_2$ square matrices defined as
\begin{equation*}
A^d:= \left( \begin{array}{cc} 0 & A \\ A^T & 0 \end{array} \right), \qquad E^d:= \left( \begin{array}{cc} 0 & E \\ E^T & 0 \end{array} \right).
\end{equation*}
Also denote $\widetilde{A}^d := A^d + E^d$. This augmentation of an asymmetric matrix into a symmetric one is called Hermitian dilation. Here the superscript $d$ means the Hermitian dilation. We also use this notation to denote quantities corresponding to $A^d$ and $\widetilde A^d$.

An important observation is that
\begin{equation*}
\left( \begin{array}{cc} 0 & A \\ A^T & 0 \end{array} \right) \left( \begin{array}{c} u_i \\ \pm\, v_i \end{array} \right) = \pm \,  \sigma_i  \left( \begin{array}{c} u_i \\ \pm\, v_i \end{array} \right).
\end{equation*}
From this identity, we know that $A^d$ have nonzero eigenvalues $\pm\, \sigma_i$ where $1 \le i \le \mathrm{rank}(A)$, and its corresponding eigenvectors are $( u_i^T, \pm\, v_i^T)^T$. For a given $r$, we stack these (normalized) eigenvectors with indices $i \in [r]$ into a matrix $V^d \in \mathbb{R}^{(d_1 + d_2) \times 2r}$:
\begin{equation*}
V^d := \frac{1}{\sqrt{2}} \left( \begin{array}{cc} U & U \\ V & -V \end{array} \right) \,.
\end{equation*}
Through the augmented matrices, we can transfer eigenvector results for symmetric matrices to singular vectors of asymmetric matrices. However, we cannot directly invoke the results proved for symmetric matrices, due to an issue about the coherence of $V^d$: when $d_1$ and $d_2$ are not comparable, the coherence $\mu(V^d)$ can be very large even when $\mu(V)$ and $\mu(U)$ are bounded. To understand this, consider the case where $r=1$, $d_1 \gg d_2$, and all entries of $U$ are $O(1/\sqrt{d_1})$, and all entries of $V$ are $O(1/\sqrt{d_2})$. Then, the coherences $\mu(U)$ and $\mu(V)$ are $O(1)$, but $\mu(V^d) = O((d_1 + d_2)/d_2) \gg 1$.

This unpleasant issue about the coherence, nevertheless, can be tackled if we consider a different matrix norm. In order to deal with the different scales of $d_1$ and $d_2$, we define the weighted max-norm for any matrix $M$ with $d_1 +d_2$ rows as follows:
\begin{equation}
\|M\|_{w} := \Big\| \left( \begin{array}{cc} \sqrt{d_1} I_{d_1} & 0 \\ 0 & \sqrt{d_2} I_{d_2} \end{array} \right) M\Big\|_{\max}\,.
\end{equation}
In other words, we rescale the top $d_1$ rows of $M$ by a factor of $\sqrt{d}_1$, and rescale the bottom $d_2$ rows by $\sqrt{d}_2$. This weighted norm serves to balance the potential different scales of $d_1$ and $d_2$.

The proofs of theorems in Section \ref{sec::asymm} will be almost the same with those in the symmetric case, with the major difference being the new matrix norm. Because the derivation is slightly repetitive, we will provide concise proofs in the appendix . Similar to the decomposition in (\ref{sec::sym}),
\begin{equation*}
A^d  =  \left( \begin{array}{cc} 0 & A_r \\ A_r^T & 0 \end{array}\right) + \left( \begin{array}{cc} 0 & A-A_r \\ A^T-A_r^T & 0 \end{array}\right) =: A_r^d + (A^d - A_r^d ) ,
\end{equation*}
where $A_r^d$ is has rank $2r$. Equivalently,
\begin{equation*}
A_r^d = \sum_{i=1}^r \sigma_i ( u_i^T,  v_i^T)^T ( u_i^T,  v_i^T) - \sum_{i=1}^r \sigma_i ( u_i^T,  -v_i^T)^T ( u_i^T,  -v_i^T).
\end{equation*}
Analogously, we define notations in (\ref{def::Vbar})--(\ref{eqn:2}) and use $d$ in the superscript to signify that they are augmented through Hermitian dilation. It is worthwhile to note that $\Lambda_1^d = \diag\{\sigma_1,\ldots,\sigma_r,-\sigma_r, \ldots, -\sigma_1\}$, and that $\min \{ |\pm \sigma_i| : i \in [r]\} = \sigma_r$ (a similar quantity as $|\lambda_r|$). Recall $\mu_0 = \mu(U) \vee \mu(V)$, $\tau_0 = \sqrt{d_1/d_2} \|E\|_{\infty} \vee \sqrt{d_2/d_1} \| E \|_{1}$ and $\varepsilon_0 = \sqrt{d_1/d_2} \|A-A_r\|_{\infty} \vee \sqrt{d_2/d_1} \| A-A_r \|_{1}$. In the proof, we will also use $\kappa_0 = \max \{ \sqrt{d_1}\, \| E V \|_{\max}, \sqrt{d_2}\, \| E^T U \|_{\max} \}$, which is a quantity similar to $\kappa$.

The next key lemma, which is parallel to Lemma \ref{lem::Qbar}, provides a bound on the solution $\overline{Q}^d$ to the quadratic equation
\begin{equation}\label{eqn:2_2}
\overline{Q}^d\, \overline{L}_1^d - \overline{L}_2^d \overline{Q}^d = H^d - \overline{Q}^d (H^d)^T\overline{Q}^d.
\end{equation}

\begin{lem}\label{lem::Qbar2}
Suppose $\sigma_r - \varepsilon_0 > 16r\mu_0 (\tau_0 + r\kappa_0)$. Then, there exists a matrix $Q^d \in \mathbb{R}^{(d_1 + d_2 - 2r) \times 2r}$ such that $\overline{Q}^d = V_\bot^d Q^d \in \mathbb{R}^{(d_1 + d_2) \times 2r}$ is a solution to the quadratic equation (\ref{eqn:2_2}), and $\overline{Q}^d$ satisfies $\| \overline{Q}^d \|_{w} \le \omega_0$. Moreover, if $r \omega_0 < 1/2$, the matrix $\overline{V}^d$ defined in (\ref{def::Vbar}) satisfies
\begin{equation}\label{ineqn::Vbarbound2}
\| \overline{V}^d - V^d \|_{w} \le  6 \sqrt{\mu_0} \, r \omega_0\,.
\end{equation}
Here, $\omega_0$ is defined as $ \omega_0 = 8(1+r \mu_0 ) \kappa_0 /3( \sigma_r  -  \varepsilon_0) $.
\end{lem}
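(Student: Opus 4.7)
\medskip

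\noindent\textbf{Proof plan for Lemma \ref{lem::Qbar2}.}
The strategy is to mirror the proof of Lemma \ref{lem::Qbar} in the symmetric case, but to replace the max-norm by the weighted max-norm $\|\cdot\|_{w}$ everywhere. Specifically, I will apply the abstract fixed-point Lemma \ref{lem::borrow} to the Banach space $\mathcal{B} = \mathbb{R}^{(d_1+d_2)\times 2r}$ equipped with $\|\cdot\|_{w}$, taking $\mathcal{B}_0 = \{V_\bot^d Q^d : Q^d \in \mathbb{R}^{(d_1+d_2-2r)\times 2r}\}$ as the closed subspace. Define the linear operator $T(X) = X\,\overline{L}_1^d - \overline{L}_2^d X$, the quadratic map $\varphi(X) = -X (H^d)^T X$, and set $y = H^d$. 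That $T^{-1}(\mathcal{B}_0) \subseteq \mathcal{B}_0$ and $\varphi(\mathcal{B}_0) \subseteq \mathcal{B}_0$ follows from left-multiplying the defining equation by $V_\bot^d (V_\bot^d)^T$, exactly as in the symmetric case, since the equation originates from the requirement $\overline{V}_\bot^T \widetilde{A}^d \overline{V}^d = 0$.

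The three key technical estimates are: (i) an upper bound $\|y\|_{w} = \|H^d\|_{w} = O(\kappa_0 + r\mu_0\kappa_0)$, obtained by noting that $H^d$ inherits its structure from $E^d V^d$, with the weights $\sqrt{d_1}, \sqrt{d_2}$ being exactly the ones that convert $\|E V\|_{\max}$ and $\|E^T U\|_{\max}$ into the quantity $\kappa_0$; (ii) a lower bound on $\|T^{-1}\|_{w}^{-1}$, which requires showing that the block-diagonal structure $\overline{L}_1^d \approx \mathrm{diag}(\sigma_1,\ldots,\sigma_r,-\sigma_r,\ldots,-\sigma_1)$ is separated from $\overline{L}_2^d$ (whose relevant action on $\mathcal{B}_0$ has norm at most $\varepsilon_0 + O(\mu_0 \tau_0)$) by at least $\beta := \sigma_r - \varepsilon_0 - O(r\mu_0 \tau_0)$; (iii) the quadratic Lipschitz bound $\|\varphi(X)\|_{w} \le \eta \|X\|_{w}^2$ and the analogous two-argument version, where $\eta$ is controlled by $\tau_0$ times an absolute constant. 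For (iii), the weighting $\|\cdot\|_{w}$ is designed precisely so that multiplying by $(H^d)^T$ (whose entries mix both blocks of differing dimension) costs only a factor $O(\tau_0)$ rather than a dimension-dependent one, which is why $\tau_0$ rather than $\|E\|_\infty$ alone appears in the hypothesis.

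With these three ingredients, the assumption $\sigma_r - \varepsilon_0 > 16 r \mu_0 (\tau_0 + r\kappa_0)$ implies $4\eta \|y\|_{w} < \beta^2$, so Lemma \ref{lem::borrow} produces a solution $\overline{Q}^d \in \mathcal{B}_0$ to (\ref{eqn:2_2}) with
\[
\|\overline{Q}^d\|_{w} \;\le\; \frac{2\|y\|_{w}}{\beta} \;\le\; \omega_0 \;=\; \frac{8(1+r\mu_0)\kappa_0}{3(\sigma_r - \varepsilon_0)}.
\]
For the bound on $\|\overline{V}^d - V^d\|_{w}$, I will use the representation $\overline{V}^d = (V^d + \overline{Q}^d)(I_{2r} + (\overline{Q}^d)^T \overline{Q}^d)^{-1/2}$ and the elementary observation that each entry of $(\overline{Q}^d)^T \overline{Q}^d$ is bounded by $d_1 \cdot (\omega_0/\sqrt{d_1})^2 + d_2 \cdot (\omega_0/\sqrt{d_2})^2 = 2\omega_0^2$, which is exactly the place where the weighting pays off. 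Combined with $r\omega_0 < 1/2$ and $\|V^d\|_{w} \le \sqrt{\mu_0 r/2}$ (coming from the coherence bounds on $U$ and $V$), this yields $\|\overline{V}^d - V^d\|_{w} \le 6\sqrt{\mu_0}\, r \omega_0$.

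The main obstacle is item (ii) above: establishing the lower bound on $\|T^{-1}\|_{w}^{-1}$ for the non-orthogonal-invariant weighted max-norm. In the symmetric case one exploited $|\lambda_r|$ being the smallest eigenvalue modulus of $\Lambda_1$; here the complication is that $\Lambda_1^d$ contains both $+\sigma_i$ and $-\sigma_i$ simultaneously, so the relevant separation is $\min_i |\pm\sigma_i| = \sigma_r$, and one must carefully verify that the off-diagonal perturbations $E_{11}^d$ and $E_{22}^d$ reduce this gap only by $O(r\mu_0 \tau_0)$ when measured in $\|\cdot\|_w$. This is handled by expanding in the augmented eigenbasis and applying the coherence bounds on $V^d$ column-by-column, analogously to the bookkeeping in the proof of Lemma \ref{lem::Qbar}.
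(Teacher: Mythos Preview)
Your proposal is correct and follows essentially the same route as the paper: apply Lemma \ref{lem::borrow} to $(\mathbb{R}^{(d_1+d_2)\times 2r}, \|\cdot\|_w)$ with the closed subspace $\mathcal{B}_0 = \{V_\bot^d Q^d\}$, then control $\overline{V}^d - V^d$ via the square-root expansion together with $\|(\overline{Q}^d)^T\overline{Q}^d\|_{\max} \le 2\omega_0^2$. One small correction to item (iii): the paper gets $\eta = 4r(1+r\mu_0)\kappa_0$ from the product inequality $\|M_1 M_2^T M_3\|_w \le 4r\|M_1\|_w\|M_2\|_w\|M_3\|_w$ combined with $\|H^d\|_w \le (1+r\mu_0)\kappa_0$, so $\eta$ carries factors of $r$ and $\mu_0$ rather than being an absolute constant times $\tau_0$; the quantity $\tau_0$ actually enters in item (ii), through $\|E^d Q_0^d\|_w \le \tau_0\|Q_0^d\|_w$ in bounding $\overline{L}_2^d$.
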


In this lemma, the bound \eqref{ineqn::Vbarbound2} bears a similar form to \eqref{ineqn::Vbarbound}: if we consider the max-norm, the first $d_1$ rows of $\overline{V}^d - V^d$ correspond to the left singular vectors $u_i$'s, and they scale with $1/\sqrt{d_1}$; and the last $d_2$ rows correspond to the right singular vectors $v_i$'s, which scale with $1/\sqrt{d_2}$. Clearly, the weighted max-norm $\| \cdot \|_w$ indeed helps to balance the two dimensions.

\appendix

\section{Proofs for Section \ref{sec::sym}}
Denote the column span of a matrix $M$ by $\mathrm{span}(M)$. Suppose two matrices $M_1,M_2 \in \mathbb{R}^{n \times m}$ ($m \le n$) have orthonormal column vectors. It is known that \citep{Ste90}
\begin{equation}\label{ineqn:equiv}
d(M_1,M_2) := \| M_1 M_1^T - M_2 M_2^T \|_2 = \| \sin \Theta(M_1,M_2) \|_2.
\end{equation}
where $\Theta(M_1,M_2)$ are the canonical angles between $\mathrm{span}(M_1)$ and $\mathrm{span}(M_2)$.
Recall the notations defined in (\ref{def::Eblock}), and also recall $\kappa = \sqrt{d} \|EV\|_{\max}$, $\Lambda_1 = \text{diag}\{ \lambda_1, \ldots, \lambda_r \}$, $\Lambda_2 = \text{diag}\{ \lambda_{r+1}, \ldots, \lambda_n \}$, $ \overline{L}_1 = \Lambda_1 + E_{11}$, $\overline{L}_2 = V_\bot (\Lambda_2 + E_{22}) V_\bot^T$ and $H = V_\bot E_{21}$. The first lemma bounds $\| H \|_{\max}$.
\begin{lem}\label{lem::H}
We have the following bound on $\| H \|_{\max}$:
\begin{equation*}
\| H \|_{\max} \le (1 + r \mu ) \kappa / \sqrt{d}.
\end{equation*}
\end{lem}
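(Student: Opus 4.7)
The plan is to write $H = V_\bot V_\bot^T E V = (I - VV^T)EV = EV - VV^T (EV)$ and then bound the two terms separately using the triangle inequality in the max-norm. The first term is immediate: by definition of $\kappa$, we have $\|EV\|_{\max} = \kappa/\sqrt{d}$. The main work is to bound $\|VV^T \cdot EV\|_{\max}$, and here the incoherence of $V$ is the key lever.

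For the second term, I would write the entries of $VV^T(EV)$ as
\begin{equation*}
(VV^T EV)_{ij} \;=\; \sum_{l=1}^d (VV^T)_{il}\, (EV)_{lj},
\end{equation*}
and bound it by $\|(VV^T)_{i,\cdot}\|_1 \cdot \|EV\|_{\max}$. This reduces the problem to bounding the $\ell_1$ norm of an arbitrary row of the projection $VV^T$. The entry $(VV^T)_{il}$ equals the inner product of the $i$-th and $l$-th rows of $V$; by Cauchy--Schwarz and the coherence definition $\mu(V) = (d/r)\max_i \sum_j V_{ij}^2$,
\begin{equation*}
|(VV^T)_{il}| \;\le\; \|V_{i,\cdot}\|_2\, \|V_{l,\cdot}\|_2 \;\le\; \sqrt{r\mu/d}\cdot\sqrt{r\mu/d} \;=\; r\mu/d.
\end{equation*}
Summing over $l \in [d]$ gives $\|(VV^T)_{i,\cdot}\|_1 \le r\mu$, so $\|VV^T EV\|_{\max} \le r\mu \cdot \kappa/\sqrt{d}$. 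Adding the two contributions yields the claimed bound $\|H\|_{\max} \le (1 + r\mu)\kappa/\sqrt{d}$.

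No step here is especially hard; the only mild subtlety is to resist the naive route of bounding $V(V^T E V)$ by first controlling $V^T E V$ entrywise and then multiplying by $V$, which only yields a factor $r\sqrt{\mu}$ rather than $r\mu$. Merging the two $V$'s into the projection $VV^T$ before applying incoherence is what buys the sharper factor, because the row $\ell_1$-norm of $VV^T$ is directly controlled by incoherence alone.
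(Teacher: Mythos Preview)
Your proof is correct and essentially identical to the paper's: both write $H = (I - VV^T)EV$, bound $|(VV^T)_{il}| \le r\mu/d$ via Cauchy--Schwarz and the incoherence definition, and combine the two pieces by the triangle inequality. One small note on your closing remark: since $\mu \ge 1$, the factor $r\sqrt{\mu}$ you attribute to the ``naive'' route is actually \emph{smaller} than $r\mu$, so merging the two copies of $V$ into $VV^T$ is not what buys sharpness here---it simply reproduces the constant as stated in the lemma.
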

\begin{proof}[Proof of Lemma \ref{lem::H}]
Using the definition $E_{21} = V_\bot^T E V$ in (\ref{def::Eblock}), we can write $H =  V_\bot V_\bot^T E V$. Since the columns of $V$ and $V_\bot$ form an orthogonal basis in $\mathbb{R}^d$, clearly
\begin{equation}\label{eqn::VZorth}
V V^T + V_\bot V_\bot^T = I_d\,.
\end{equation}
By Cauchy-Schwarz inequality and the definition of $\mu$, for any $i, j \in [d]$,
\begin{equation*}
|( V V^T )_{ij}| = \sum_{k=1}^r |V_{ik} V_{jk}| \le\big(\sum_{k=1}^r V_{ik}^2 \big)^{1/2} \cdot \big( \sum_{k=1}^r V_{jk}^2 \big)^{1/2} \le \frac{r \mu}{d} \,.
\end{equation*}
Using the identity (\ref{eqn::VZorth}) and the above inequality, we derive
\begin{align*}
\| H \|_{\max} & \le  \|EV\|_{\max} + \|VV^T EV\|_{\max} \\
& \le (1 + d \|VV^T\|_{\max} ) \, \|EV\|_{\max}  \le (1 + r \mu ) \| EV\|_{\max}\,,
\end{align*}
which completes the proof.
\end{proof}

\begin{lem}\label{lem::linop}
If $|\lambda_r| > \kappa r \sqrt{\mu}$, then $\overline{L}_1$ is an invertible matrix. Furthermore,
\begin{equation}
\inf_{\|Q_0\|_{\max} = 1} \| Q_0 \overline{L}_1 - \overline{L}_2 Q_0 \|_{\max} \ge  |\lambda_r| - 3r\mu(\tau + r\kappa)   - \varepsilon \,,
\end{equation}
where $Q_0$ is an $d \times r$ matrix.
\end{lem}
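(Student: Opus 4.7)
The plan is to evaluate $Q_0 \overline{L}_1 - \overline{L}_2 Q_0$ at a carefully chosen entry. Pick $(i^*,j^*)$ with $|(Q_0)_{i^*,j^*}|=\|Q_0\|_{\max}=1$, and after replacing $Q_0$ by $-Q_0$ if needed, assume $(Q_0)_{i^*,j^*}=1$. Invertibility of $\overline{L}_1=\Lambda_1+E_{11}$ follows from a bound on $\|E_{11}\|_2$: by Cauchy--Schwarz, $|(E_{11})_{kl}|=|v_k^T E v_l|\le \|v_k\|_2\,\|Ev_l\|_2\le \sqrt{d}\,\|EV\|_{\max}=\kappa$, since $\|Ev_l\|_2\le\sqrt d\,\|Ev_l\|_\infty\le\sqrt d\,\|EV\|_{\max}$. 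Hence $\|E_{11}\|_2\le \|E_{11}\|_F\le r\|E_{11}\|_{\max}\le r\kappa$. Orthonormality of $V$ forces $\mu\ge 1$, so the hypothesis $|\lambda_r|>\kappa r\sqrt{\mu}$ gives $|\lambda_r|>r\kappa\ge\|E_{11}\|_2$, and Weyl's inequality yields invertibility.

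For the lower bound I would use the decomposition
\begin{equation*}
Q_0\overline{L}_1 - \overline{L}_2 Q_0 = \bigl[Q_0\Lambda_1-(A-A_r)Q_0\bigr]+Q_0 E_{11}-(I-VV^T)E(I-VV^T)Q_0,
\end{equation*}
which uses $V_\bot\Lambda_2V_\bot^T=A-A_r$ and $V_\bot V_\bot^T=I-VV^T$. At entry $(i^*,j^*)$ the main bracket contributes $\lambda_{j^*}-((A-A_r)Q_0)_{i^*,j^*}$, whose magnitude is at least $|\lambda_r|-\|A-A_r\|_\infty\,\|Q_0\|_{\max}\ge |\lambda_r|-\varepsilon$. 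The perturbation $Q_0E_{11}$ contributes at most $r\|E_{11}\|_{\max}\le r\kappa$ at any entry, since a row of $Q_0$ has $r$ entries bounded by one.

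The main work is to bound $(I-VV^T)E(I-VV^T)Q_0$ entrywise by expanding into $EQ_0 - VV^T EQ_0 - EVV^T Q_0 + VE_{11}V^T Q_0$. The leading term obeys $\|EQ_0\|_{\max}\le\|E\|_\infty=\tau$. For the cross terms I would use two asymmetric $\ell_1$ estimates on $V$: each row of $V$ has $\ell_1$-norm at most $\sqrt{r}\cdot(r\mu/d)^{1/2}=r\sqrt{\mu/d}$ (Cauchy--Schwarz plus incoherence), while each column $v_k$ has $\ell_1$-norm at most $\sqrt{d}\,\|v_k\|_2=\sqrt{d}$. Combining these with $\|E\|_\infty=\tau$, $\|EV\|_{\max}=\kappa/\sqrt{d}$ and $\|E_{11}\|_{\max}\le\kappa$ yields
\begin{equation*}
\|VV^T EQ_0\|_{\max},\ \|EVV^T Q_0\|_{\max}\le r\sqrt{\mu}\,\tau,\qquad \|VE_{11}V^T Q_0\|_{\max}\le r^2\sqrt{\mu}\,\kappa.
\end{equation*}
Summing all error contributions at $(i^*,j^*)$ and using $\mu\ge 1$, $r\ge 1$ to absorb lower-order pieces, the total error is at most $\varepsilon+r\kappa+(1+2r\sqrt\mu)\tau+r^2\sqrt\mu\,\kappa\le \varepsilon+3r\mu\tau+3r^2\mu\kappa=\varepsilon+3r\mu(\tau+r\kappa)$, which together with the main-term bound gives the claim.

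The only delicate point is to avoid the crude estimate $\|VV^T\|_\infty\le r\mu$, which would propagate full powers of $r\mu$ through every cross term and destroy the inequality. The saving is asymmetric: a \emph{row} $\ell_1$-norm of $V$ costs only $r\sqrt{\mu/d}$ (gaining a $1/\sqrt{d}$ from incoherence), whereas a \emph{column} $\ell_1$-norm costs $\sqrt{d}$. Each cross term happens to pair exactly one row factor of $V$ with one column factor, so the $\sqrt{d}$ precisely cancels the $1/\sqrt{d}$, producing the clean $r\sqrt{\mu}$ scaling. Tracking this cancellation in all four expanded pieces is the main arithmetic obstacle; the subsequent absorption into $r\mu$ is routine once $\mu\ge 1$ is noted.
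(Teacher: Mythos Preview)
Your proof is correct and follows essentially the same route as the paper: decompose $Q_0\overline{L}_1-\overline{L}_2 Q_0$ into the main diagonal piece $Q_0\Lambda_1$, the small correction $Q_0E_{11}$, the approximation error $(A-A_r)Q_0$, and the term $(I-VV^T)E(I-VV^T)Q_0$, then bound each piece entrywise and combine. Your invertibility argument via $\|E_{11}\|_2\le r\kappa$ and Weyl's inequality is a clean alternative to the paper's approach of showing $\|Q_0\overline{L}_1\|_{\max}\ge|\lambda_r|-\kappa r\sqrt{\mu}>0$.

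One correction to your commentary: the asymmetric row/column $\ell_1$ trick is \emph{not} needed here. The paper uses precisely the ``crude'' estimate you warn against---it bounds $\|(I-VV^T)M\|_{\max}\le(1+d\|VV^T\|_{\max})\|M\|_{\max}\le(1+r\mu)\|M\|_{\max}$---and after first showing $\|E(I-VV^T)Q_0\|_{\max}\le\tau+r\kappa$, obtains $(1+r\mu)(\tau+r\kappa)$ for the whole block. Together with $\kappa r\sqrt{\mu}$ for $Q_0E_{11}$ and $\varepsilon$ for the approximation term, this still absorbs into $3r\mu(\tau+r\kappa)$ since $\mu\ge 1$. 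Your sharper intermediate bounds $r\sqrt{\mu}\,\tau$ and $r^2\sqrt{\mu}\,\kappa$ are correct and pleasant, but they are not what makes the proof go through; the final inequality survives either way.
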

\begin{proof}[Proof of Lemma \ref{lem::linop}]
Let $Q_0$ be any $d \times r$ matrix with $\| Q_0 \|_{\max} = 1$. Note
\begin{equation*}
Q_0 \overline{L}_1 - \overline{L}_2 Q_0 = Q_0 \Lambda_1 + Q_0 E_{11} - \overline{L}_2 Q_0.
\end{equation*}
We will derive upper bounds on $Q_0 E_{11}$ and $\overline{L}_2 Q_0$, and a lower bound on $Q_0 \Lambda_1$. Since $E_{11} = V^T E V$ by definition, we expand $Q_0 E_{11}$ and use a trivial inequality to derive
\begin{equation}\label{ineqn::Q0E11}
\| Q_0 E_{11} \|_{\max} \le d \,\| Q_0 V^T \|_{\max} \| EV \|_{\max}\,.
\end{equation}
By Cauchy-Schwarz inequality and the definition of $\mu$  in (\ref{def::incoherence}), for $i,j \in [d]$,
\begin{equation*}
| (Q_0 V^T )_{ij} |  \le \sum_{k=1}^r |(Q_0)_{ik} V_{jk}|  \le \big(\sum_{k=1}^r (Q_0)_{ik}^2 \big)^{1/2} \, \big( \sum_{k=1}^r V_{jk}^2 \big)^{1/2}  \le \sqrt{r}  \cdot \sqrt{\frac{r \mu}{d}} \,,
\end{equation*}
Substituting $ \| EV \|_{\max} = \kappa / \sqrt{d}$ into (\ref{ineqn::Q0E11}), we obtain an upper bound:
\begin{equation}\label{ineqn::Q0E11-2}
\| Q_0 E_{11} \|_{\max} \le \kappa r \sqrt{\mu} \,.
\end{equation}
To bound  $\overline{L}_2 Q_0 = (V_\bot E_{22} V_\bot^T + (A - A_r))Q_0$, we use the identity (\ref{eqn::VZorth}) and write
\begin{equation*}
V_\bot E_{22} V_\bot^T Q_0= V_\bot V_\bot^T E V_\bot V_\bot^T Q_0 = (I_d - VV^T) E (I_d - VV^T) Q_0\,.
\end{equation*}
Using two trivial inequalities $ \| E Q_0 \|_{\max} \le \| E \|_{\infty} \| Q_0 \|_{\max} =  \| E \|_{\infty}$ and $ \| V^T Q_0 \|_{\max} \le \| V^T \|_{\infty} \| Q_0 \|_{\max} \le \sqrt{d}$, we have
\begin{align*}
\| E(I_d - VV^T)Q_0 \|_{\max} & \le \| EQ_0\|_{\max} + r \|EV \|_{\max} \| V^T Q_0 \|_{\max} \\
& \le  \| E \|_{\infty} + r  \sqrt{d} \, \| EV \|_{\max} = \tau + r \kappa \,.
\end{align*}
In the proof of Lemma \ref{lem::H}, we showed $\| VV^T \|_{\max} \le r \mu /d$. Thus,
\begin{equation*}
\| V_\bot E_{22} V_\bot^T Q_0\|_{\max} \le (1 + d \,\| VV^T \|_{\max}) \cdot \| E(I_d - VV^T)Q_0 \|_{\max} \le  (1+r\mu)(\tau + r \kappa) \,.
\end{equation*}
Moreover, $\| (A-A_r)Q_0 \|_{\max} \le \| A - A_r \|_\infty \| Q_0 \|_{\max} = \varepsilon$. Combining the two bounds,
\begin{equation}\label{ineqn::L2Q0}
\| \overline{L}_2 Q_0 \|_{\max} \le  (1+r\mu)(\tau + r \kappa) + \varepsilon.
\end{equation}
It is straightforward to obtain a lower bound on $\|Q_0 \Lambda_1\|_{\max}$: since there is an entry of $Q_0$, say $(Q_0)_{ij}$, that has an absolute value of $1$, we have
\begin{equation}\label{ineqn::Q0L1}
 \| Q_0 \Lambda_1 \|_{\max} \ge | (Q_0)_{ij} \lambda_j | \ge |\lambda_r| .
\end{equation}
To show $\overline{L}_1$ is invertible, we use (\ref{ineqn::Q0E11}) and (\ref{ineqn::Q0L1}) to obtain
\begin{equation*}
\|Q_0 \overline{L}_1 \|_{\max}  \ge \| Q_0 \Lambda_1 \|_{\max} - \| Q_0 E_{11} \|_{\max} \ge |\lambda_r| - \kappa r \sqrt{\mu}\,.
\end{equation*}
When $|\lambda_r| - \kappa r \sqrt{\mu} > 0$, $\overline{L}_1$ must have full rank, because otherwise we can choose an appropriate $Q_0$ in the null space of $\overline{L}_1^T$ so that $ Q_0 \overline{L}_1 = 0$, which is a contradiction. To prove the second claim of the lemma, we combine the lower bound (\ref{ineqn::Q0L1}) with upper bounds (\ref{ineqn::Q0E11-2}) and (\ref{ineqn::L2Q0}) to derive
\begin{align*}
\| Q_0 \overline{L}_1 - \overline{L}_2 Q_0 \|_{\max} &\ge \|Q_0L_1\|_{\max} - \| Q_0 E_{11} \|_{\max} - \| \overline{L}_2 Q_0 \|_{\max} \\
&\ge |\lambda_r| - \kappa r \sqrt{\mu} - (1+r\mu)(\tau + r \kappa) - \varepsilon \\
&\ge |\lambda_r| - 3r\mu(\tau + r\kappa) - \varepsilon \,,
\end{align*}
which is exactly the desired inequality.
\end{proof}

Next we prove Lemma \ref{lem::borrow}. This lemma follows from \cite{Ste90}, with minor changes that involves $\mathcal{B}_0$. We provide a proof for the sake of completeness.

\begin{proof}[{\bf Proof of Lemma \ref{lem::borrow}}]
Let us write $\alpha = \| y \|$ for shorthand and recall $\beta = \| T^{-1} \|^{-1}$. As the first step, we show that the sequence $\{ x_k \}_{k=0}^\infty$ is bounded. By construction in (\ref{eqn::seq}), we bound $\| x_{k+1}\|$ using $\| x_k \|$:
\begin{equation*}
\|x_{k+1} \| \le \| T^{-1} \|( \|y\| + \| \varphi(x_k)\|) \le \frac{\alpha}{\beta} + \frac{\eta}{\beta}\|x_k\|^2.
\end{equation*}
We use this inequality to derive an upper bound on $\{x_k\}$ for all $k$. We define $\xi_0 = 0$ and
\begin{equation*}
\xi_{k+1} = \frac{\alpha}{\beta} + \frac{\eta}{\beta} \xi_k^2, \quad k \ge 0,
\end{equation*}
then clearly $\|x_k\| \le \xi_k$ (which can be shown by induction). It is easy to check (by induction) that the sequence $\{ \xi_k\}_{k=1}^\infty$ is  increasing. Moreover, since $4 \alpha \eta < \beta^2$, the quadratic function
\begin{equation*}
\phi(\xi) = \frac{\alpha}{\beta} + \frac{\eta}{\beta}\xi^2,
\end{equation*}
has two fixed points (namely solutions to $\phi(\xi) = \xi$), and the smaller one satisfies
\begin{equation*}
\xi_{\star} = \frac{2\alpha}{\beta+\sqrt{\beta^2 - 4\eta\alpha}} < \frac{2\alpha}{\beta}.
\end{equation*}
If $\xi_k < \xi_{\star}$, then $ \xi_{k+1} = \phi(\xi_k) \le \phi(\xi_{\star}) = \xi_{\star}$. Thus, by induction, all $\xi_k$ are bounded by $\xi_{\star}$. This implies $\| x_k \| \le \xi_{\star} < 2\alpha / \beta$. The next step is to show that the sequence $\{ x_k \}$ converges. Using the recursive definition (\ref{eqn::seq}) again, we derive
\begin{align*}
\| x_{k+1} - x_k \| &\le \| T^{-1} \| \| \varphi(x_k) - \varphi(x_{k-1}) \| \\
&\le 2\beta^{-1}\eta \max\{ \|x_k\|, \|x_{k-1}\| \}\| x_k - x_{k-1} \| \\
&\le \frac{4\alpha \eta}{\beta^2} \| x_k - x_{k-1} \|.
\end{align*}
Since $4 \alpha \eta / \beta^2 < 1$, the sequence $\{ x_k \}_{k=0}^\infty$ is a Cauchy sequence, and convergence is secured. Let $x^\star \in \mathcal{B}$ be the limit. It is clear by assumption that $x_k \in \mathcal{B}_0$ implies $x_{k+1} \in \mathcal{B}_0$, so $x^\star \in \mathcal{B}_0$ and $\| x^\star \| \le 2\alpha / \beta$ by continuity.

The final step is to show $x^\star$ is a solution to $Tx = y + \phi(x)$. Because $ \{ x_k \}_{k=0}^\infty$ is bounded and $\phi$ satisfies (\ref{ineqn::phi}), the sequence $\{\phi(x_k)\}_{k=0}^\infty$ converges to $\phi(x^\star)$ by continuity and compactness. The linear operator $T$ is also continuous, so we can take limits on both sides of $Tx_{k+1} = y + \phi(x_k)$, we conclude that $x^\star$ is a solution to $Tx = y + \phi(x)$.
\end{proof}

With all the preparations, we are now ready to present the key lemma. As discussed in Section \ref{sec::org}, we set
\begin{equation*}
\mathcal{B}_0 := \{ \overline{Q} \in \mathbb{R}^{d \times r}: \overline{Q} = V_\bot Q \text{ for some } Q \in \mathbb{R}^{(d-r)\times r} \}.
\end{equation*}
which is a subspace of $\mathcal{B} = \mathbb{R}^{d \times r}$. Consider the matrix max-norm $\| \cdot \|_{\max}$ in $\mathcal{B}$.

\begin{lem} \label{lem:main1}
Suppose $|\lambda_r| - \varepsilon > 4r \mu(\tau+2r\kappa)$. Then there exists a solution $\overline{Q} \in \mathcal{B}_0$ to the equation (\ref{eqn:2}) with
\begin{equation*}
\| \overline{Q} \|_{\max} \le  \frac{8(1+r\mu) \kappa}{\big( |\lambda_r|  - \varepsilon \big) \sqrt{d} }\,.
\end{equation*}
\end{lem}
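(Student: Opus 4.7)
The plan is to set up the quadratic equation \eqref{eqn:2} as a fixed-point problem on a Banach space so that we can invoke Lemma~\ref{lem::borrow}. I would take the ambient space to be $\mathcal{B}=\mathbb{R}^{d\times r}$ equipped with $\|\cdot\|_{\max}$, with closed subspace $\mathcal{B}_0=\{V_\bot Q : Q\in\mathbb{R}^{(d-r)\times r}\}$, and define the linear operator $T(\overline{Q})=\overline{Q}\,\overline{L}_1-\overline{L}_2\overline{Q}$, the quadratic map $\varphi(\overline{Q})=-\overline{Q}H^T\overline{Q}$, and the fixed forcing term $y=H$. Then \eqref{eqn:2} is exactly $T\overline{Q}=y+\varphi(\overline{Q})$.

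The first routine step is to check the invariance requirements. Since $H=V_\bot E_{21}$, we immediately get $y\in\mathcal{B}_0$. If $\overline{Q}=V_\bot Q$, then $\overline{L}_2\overline{Q}=V_\bot(\Lambda_2+E_{22})Q\in\mathcal{B}_0$ and $\overline{Q}\overline{L}_1=V_\bot(Q\overline{L}_1)\in\mathcal{B}_0$, so $T(\mathcal{B}_0)\subseteq \mathcal{B}_0$; likewise $\varphi(\overline{Q})=V_\bot Q(H^T\overline{Q})\in\mathcal{B}_0$. Invertibility of $T$ on $\mathcal{B}$ (and the bound $\beta:=\|T^{-1}\|^{-1}\ge |\lambda_r|-3r\mu(\tau+r\kappa)-\varepsilon$) comes directly from Lemma~\ref{lem::linop}; combined with $T(\mathcal{B}_0)\subseteq\mathcal{B}_0$ and finite-dimensionality, this forces $T^{-1}(\mathcal{B}_0)\subseteq\mathcal{B}_0$.

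The main quantitative step is estimating the constant $\eta$ for $\varphi$. Using the elementary inequality $\|AB\|_{\max}\le m\|A\|_{\max}\|B\|_{\max}$ (where $m$ is the summation index) twice, I would bound
\[
\|xH^Tx\|_{\max}\le r\|x\|_{\max}\cdot\|H^Tx\|_{\max}\le rd\,\|H\|_{\max}\,\|x\|_{\max}^{2},
\]
and a similar split of $yH^Ty-xH^Tx=(y-x)H^Ty+xH^T(y-x)$ yields the Lipschitz-like inequality required in \eqref{ineqn::phi} with $\eta=rd\|H\|_{\max}$. Invoking Lemma~\ref{lem::H} then gives $\eta\le r(1+r\mu)\kappa\sqrt{d}$ and $\|y\|_{\max}\le(1+r\mu)\kappa/\sqrt{d}$, so $4\eta\|y\|\le 4r(1+r\mu)^2\kappa^2$. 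The hypothesis $|\lambda_r|-\varepsilon>4r\mu(\tau+2r\kappa)$ yields $\beta\ge(|\lambda_r|-\varepsilon)/4$, and a straightforward comparison (using $\mu\ge 1$ and $r\ge 1$) shows $4\eta\|y\|<\beta^{2}$.

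With all hypotheses of Lemma~\ref{lem::borrow} verified, the iteration $\overline{Q}^{t+1}=T^{-1}(y+\varphi(\overline{Q}^t))$ converges in $\mathcal{B}_0$ to a solution $\overline{Q}=V_\bot Q$ of \eqref{eqn:2} satisfying
\[
\|\overline{Q}\|_{\max}\le\frac{2\|y\|_{\max}}{\beta}\le\frac{2(1+r\mu)\kappa}{\sqrt{d}}\cdot\frac{4}{|\lambda_r|-\varepsilon}=\frac{8(1+r\mu)\kappa}{(|\lambda_r|-\varepsilon)\sqrt{d}},
\]
which is the claimed bound. The place that I expect to require the most care is the $\eta$ estimate: the max-norm is not submultiplicative, so tracking the correct factors of $d$ and $r$ through $\|AB\|_{\max}\le m\|A\|_{\max}\|B\|_{\max}$ (and noting the cancellation of $\sqrt{d}$ when combined with Lemma~\ref{lem::H}) is the key quantitative observation that lets $4\eta\|y\|<\beta^2$ hold under the stated eigengap assumption.
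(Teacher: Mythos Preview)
Your approach is essentially identical to the paper's: set up the fixed-point equation on $(\mathbb{R}^{d\times r},\|\cdot\|_{\max})$ with the subspace $\mathcal{B}_0$, invoke Lemma~\ref{lem::linop} for $\beta$, Lemma~\ref{lem::H} for $\|H\|_{\max}$ and hence $\eta=r\sqrt{d}(1+r\mu)\kappa$, verify the hypotheses of Lemma~\ref{lem::borrow}, and read off $\|\overline{Q}\|_{\max}\le 2\|H\|_{\max}/\beta$. Your dimension-counting argument for $T^{-1}(\mathcal{B}_0)\subseteq\mathcal{B}_0$ (injective on $\mathcal{B}$ plus $T(\mathcal{B}_0)\subseteq\mathcal{B}_0$ in finite dimension) is a clean alternative to the paper's route, which instead argues that $Q_0\overline{L}_1\in\mathcal{B}_0$ and then uses the invertibility of $\overline{L}_1$ from Lemma~\ref{lem::linop}.

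There is one numerical slip. You verify $4\eta\|y\|<\beta^2$ using the already-simplified bound $\beta\ge(|\lambda_r|-\varepsilon)/4$, but that loses too much: with $r=\mu=1$ and $\kappa=\tau$ (attainable, e.g.\ when $v_1=d^{-1/2}\mathbf{1}$ and $E$ has a single constant row), the hypothesis gives only $|\lambda_r|-\varepsilon>12\kappa$, whereas $4r(1+r\mu)^2\kappa^2<\tfrac{1}{16}(|\lambda_r|-\varepsilon)^2$ would need $|\lambda_r|-\varepsilon>16\kappa$. The paper avoids this by checking $4\eta\|H\|_{\max}<\beta^2$ against the \emph{unsimplified} lower bound $\beta\ge|\lambda_r|-3r\mu(\tau+r\kappa)-\varepsilon$, which reduces to $2\sqrt{r}(1+r\mu)\kappa<|\lambda_r|-\varepsilon-3r\mu(\tau+r\kappa)$ and does follow from $|\lambda_r|-\varepsilon>4r\mu(\tau+2r\kappa)$; only afterward does it pass to $\beta\ge(|\lambda_r|-\varepsilon)/4$ to simplify the final bound on $\|\overline{Q}\|_{\max}$. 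Make that adjustment and your argument goes through verbatim.
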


\begin{proof}[Proof of Lemma \ref{lem:main1}]
We will invoke Lemma \ref{lem::borrow} and apply it to the quadratic equation (\ref{eqn:2}). To do so, we first check the conditions required in Lemma \ref{lem::borrow}.

Let the linear operator $\mathcal{T}$ be $\mathcal{T} \overline{Q} = \overline{Q} \overline{L}_1 - \overline{L}_2 \overline{Q}$. By Lemma \ref{lem::linop}, $\mathcal{T}$ has a bounded inverse, and $\beta := \| \mathcal{T}^{-1} \|_{\max}^{-1}$ is bounded from below:
\begin{equation}\label{ineqn::betalb}
\beta \ge |\lambda_r| - 3r \mu (\tau + r\kappa) - \varepsilon \,.
\end{equation}
Let us define $\varphi$ by $\varphi(\overline{Q}) = \overline{Q} H^T \overline{Q}$. To check the inequalities in (\ref{ineqn::phi}), observe that
\begin{equation*}
\| \varphi(\overline{Q}) \|_{\max} \le rd\| \overline{Q} \|_{\max} \| H \|_{\max} \| \overline{Q} \|_{\max}  \le (1 + r\mu) \kappa r \sqrt{d} \, \| \overline{Q} \|_{\max}^2
\end{equation*}
where we used Lemma \ref{lem::H}. We also observe
\begin{align*}
\| \varphi(\overline{Q}_1) - \varphi(\overline{Q}_2) \|_{\max} & = \| \overline{Q}_1 H^T (\overline{Q}_1 - \overline{Q}_2) + (\overline{Q}_1 - \overline{Q}_2) H^T \overline{Q}_2\|_{\max} \\
& \le rd \| \overline{Q}_1 \|_{\max} \| H \|_{\max} \| \overline{Q}_1 - \overline{Q}_2 \|_{\max} +  rd \| \overline{Q}_1 - \overline{Q}_2 \|_{\max} \| H \|_{\max} \| \overline{Q}_2 \|_{\max} \\
& \le 2(1 + r\mu) \kappa r \sqrt{d}\, \max\{ \| \overline{Q}_1 \|_{\max}, \| \overline{Q}_2 \|_{\max} \} \| \overline{Q}_1 - \overline{Q}_2 \|_{\max}.
\end{align*}
Thus, if we set $\eta = (1 + r\mu) \kappa r \sqrt{d}$, then inequalities required in (\ref{ineqn::phi}) are satisfied. For any $\overline{Q}$ with $\overline{Q} = V_\bot Q \in \mathcal{B}_0$, obviously $\varphi(\overline{Q}) = V_\bot Q H^T \overline{Q}  \in \mathcal{B}_0$. To show $\mathcal{T}^{-1} (\overline{Q}) \in \mathcal{B}_0$, let $Q_0 = \mathcal{T}^{-1} (\overline{Q}) $ and observe that
\begin{equation*}
Q_0 \overline{L}_1 - \overline{L}_2 Q_0 = \overline{Q} \in \mathcal{B}_0.
\end{equation*}
By definition, we know $\overline{L}_2 Q_0= V_\bot (E_{22} + \Lambda_2)V_\bot^T Q_0 \in \mathcal{B}_0$, so we deduce $Q_0 \overline{L}_1 \in \mathcal{B}_0$. Our assumption implies $|\lambda_r| > \kappa r \sqrt{\mu}$, so by Lemma \ref{lem::linop}, the matrix $\overline{L}_1$ is invertible, and thus $Q_0 \in \mathcal{B}_0$. The last condition we check is $4 \eta \| H \|_{\max} < \beta^2 $. By Lemma \ref{lem::H} and (\ref{ineqn::betalb}), this is true if
\begin{equation*}
4(1+r\mu)^2\kappa^2r < \big[ |\lambda_r| - 3r \mu (\tau+r\kappa) - \varepsilon \big]^2.
\end{equation*}
The above inequality holds when $|\lambda_r| > 4r \mu(V)(\tau+2r\kappa) + \varepsilon $. Under this condition, we have, by Lemma \ref{lem::borrow},
\begin{equation*}
\| \overline{Q} \|_{\max} \le \frac{2(1+r\mu) \kappa}{\big( |\lambda_r| - 3r\mu(\tau + r\kappa) - \varepsilon \big) \sqrt{d} } \le \frac{8(1+r\mu) \kappa}{ (|\lambda_r| - \varepsilon) \sqrt{d}},
\end{equation*}
where, the second inequality is due to $3r\mu(\tau + r\kappa) \le 3(|\lambda_r| - \varepsilon)/4$.
\end{proof}

The next lemma is a consequence of Lemma \ref{lem:main1}. We define, as in Lemma \ref{lem::Qbar}, that $\omega = 8(1+r\mu) \kappa /  (|\lambda_r| - \varepsilon)$.
\begin{lem}\label{lem::taylor1}
If $r\omega^2 < 1/2$, then
\begin{equation*}
 \| (I_r + \overline{Q}^T \overline{Q})^{-1/2} - I_r \|_{\max} \le r\omega^2,  \qquad \| (I_r + \overline{Q}^T \overline{Q})^{-1/2} \|_{\max}  \le \frac{3}{2}.
\end{equation*}
\end{lem}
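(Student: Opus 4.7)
The plan is to reduce both bounds to a one-dimensional calculation via the spectral decomposition of the symmetric positive semi-definite matrix $M := \overline{Q}^T \overline{Q} \in \mathbb{R}^{r \times r}$. Since $M$ commutes with itself, $(I_r + M)^{-1/2}$ is simultaneously diagonalizable with $M$: if $M = U \Lambda U^T$ with $\Lambda = \diag(\lambda_1, \ldots, \lambda_r)$, $\lambda_i \ge 0$, then $(I_r + M)^{-1/2} - I_r = U\, \diag\bigl((1+\lambda_i)^{-1/2} - 1\bigr)\, U^T$, so its spectral norm equals $\max_i |(1+\lambda_i)^{-1/2} - 1|$.

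First I would control the eigenvalues of $M$. Writing $M_{ij} = \sum_{k=1}^d \overline{Q}_{ki} \overline{Q}_{kj}$ and invoking Lemma \ref{lem:main1}, we get $\|M\|_{\max} \le d\,\|\overline{Q}\|_{\max}^2 \le \omega^2$. Since $M$ is only $r \times r$, the crude estimate $\|M\|_2 \le \|M\|_F \le r\,\|M\|_{\max} \le r\omega^2 < 1/2$ then shows that each $\lambda_i$ lies in $[0, r\omega^2]$.

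Next, a scalar estimate. For $\lambda \ge 0$ the function $f(\lambda) = (1+\lambda)^{-1/2}$ satisfies $f'(\lambda) = -\tfrac{1}{2}(1+\lambda)^{-3/2}$, so $|f'| \le 1/2$ on $[0,\infty)$ and the mean value theorem gives $|(1+\lambda)^{-1/2} - 1| \le \lambda/2$. Combining this with the universal inequality $\|A\|_{\max} \le \|A\|_2$ (from $|A_{ij}| = |e_i^T A e_j| \le \|A\|_2$) yields
\[
\| (I_r + M)^{-1/2} - I_r \|_{\max} \le \| (I_r + M)^{-1/2} - I_r \|_2 = \max_i |(1+\lambda_i)^{-1/2} - 1| \le \tfrac{1}{2} r\omega^2 \le r\omega^2,
\]
which is the first claimed bound. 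The second then follows at once by the triangle inequality:
\[
\| (I_r + M)^{-1/2} \|_{\max} \le \| I_r \|_{\max} + \tfrac{1}{2} r\omega^2 \le 1 + \tfrac{1}{4} < \tfrac{3}{2}.
\]

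There is no genuine obstacle here; the proof is mechanical once one resists the temptation to compute entrywise and instead passes to the spectrum of the small matrix $M$. The only place that calls for a moment of care is the passage from $\|M\|_{\max}$ to $\|M\|_2$, where the dimension-dependent factor $r$ enters; this is harmless because the hypothesis $r\omega^2 < 1/2$ already absorbs it.
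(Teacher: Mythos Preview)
Your proof is correct and follows essentially the same approach as the paper: both diagonalize the small $r\times r$ matrix $\overline{Q}^T\overline{Q}$, bound its eigenvalues by $r\omega^2$, apply a scalar estimate on $(1+\lambda)^{-1/2}-1$, and then pass to the max-norm. The only cosmetic differences are that the paper uses $|(1+x)^{-1/2}-1|\le |x|$ for $|x|<1/2$ and a direct Cauchy--Schwarz entrywise argument where you invoke $\|\cdot\|_{\max}\le\|\cdot\|_2$; your route is slightly slicker and even yields the sharper constant $\tfrac12 r\omega^2$.
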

\begin{proof}[Proof of Lemma \ref{lem::taylor1}]
By the triangular inequality, the second inequality is immediate from the first one. To prove the first inequality, suppose the spectral decomposition of $\overline{Q}^T \overline{Q}$ is $\overline{Q}^T \overline{Q} = \overline{U}\, \overline{\Sigma}\, \overline{U}^T$, where $\overline{\Sigma} = \text{diag}\{ \overline{\lambda}_1, \ldots, \overline{\lambda}_r \}$ where $\overline{\lambda}_1 \ge \ldots  \ge \overline{\lambda}_r$, and $\overline{U} = [ \overline{u}_1, \overline{u}_2, \ldots, \overline{u}_r]$ where $\overline{u}_1,\ldots, \overline{u}_r$ are orthonormal vectors in $\mathbb{R}^r$. Since $\overline{Q}^T \overline{Q}$ has nonnegative eigenvalues, we have $\overline{\lambda}_r \ge 0$. Using these notations, we can rewrite the matrix as
\begin{equation*}
(I_r + \overline{Q}^T \overline{Q})^{-1/2} - I_r = \sum_{i=1}^r \big( (1+\overline{\lambda}_i)^{-1/2} - 1 \big) \overline{u}_i \overline{u}_i^T\,.
\end{equation*}
Note that $\overline{\lambda}_1 \le \| \overline{Q}^T \overline{Q} \| \le r d\| \overline{Q} \|_{\max}^2 \le r\omega^2$, which implies $\overline{\lambda}_1 < 1/2$. It is easy to check that $1 + |x| \ge (1+x)^{-1/2} \ge 1 - |x|$ whenever $|x| < 1/2$. From this fact, we know $|(1+\overline{\lambda}_i)^{-1/2} - 1| \le \overline{\lambda}_i \le r\omega^2$. Using Cauchy-Schwarz inequality, we deduce that for any $j,k \in [d]$,
\begin{align*}
\big| [ (I_r + \overline{Q}^T \overline{Q})^{-1/2} - I_r]_{jk} \big| & \le\sum_{i=1}^r \big|(1+\overline{\lambda}_i)^{-1/2} - 1 \big| \cdot \big|  \overline{U}_{ji}\overline{U}_{ki} \big| \\
&\le r \omega^2 \cdot \big( \sum_{i=1}^r \overline{U}_{ji}^2 \big)^{1/2} \big( \sum_{i=1}^r \overline{U}_{ki}^2 \big)^{1/2} \\
&\le r\omega^2.
\end{align*}
This leads to the desired max-norm bound.
\end{proof}

\begin{proof}[{\bf Proof of Lemma \ref{lem::Qbar}}]
The first claim of the lemma (the existence of $\overline{Q}$ and its max-norm bound) follows directly from Lemma \ref{lem:main1}. To prove the second claim, we split $\overline{V} - V$ into two parts:
\begin{align}
\overline{V} - V &= V \left( (I_r + Q^TQ)^{-1/2} - I_r \right) + V_\bot Q(I_r +Q^TQ)^{-1/2} \notag\\
& = V \left( (I_r + \overline{Q}^T\overline{Q})^{-1/2} - I_r \right) + \overline{Q}(I_r +\overline{Q}^T\overline{Q})^{-1/2},
\end{align}
where we used identity $V_\bot^T V_\bot = I_{d-r}$. Note that $r\omega < 1/2$ implies $r\omega^2 = (r\omega)^2/r < 1/(4r) < 1/2$. Thus, we can use Lemma \ref{lem::taylor1} and derive

\begin{equation*}
\Big\| V \Big( (I_r + \overline{Q}^T \overline{Q} )^{-1/2} - I_r \Big) \Big\|_{\max} \le \sqrt{\frac{r^2 \mu}{d}}  \| ( I_r + \overline{Q}^T\overline{Q})^{-1/2} - I_r \|_{\max}\le  \sqrt{\frac{\mu}{d}} \, r^2 \omega^2.
\end{equation*}
where we used Cauchy-Schwarz inequality. Using the above inequality and the bound on $\| Q \|_{\max}$ (namely, the first claim in the lemma),
\begin{align*}
\| \overline{V} - V \|_{\max} &\le  \sqrt{\frac{\mu}{d}} \, r^2 \omega^2  +   r \| \overline{Q} \|_{\max} \| (I_r +\overline{Q}^T\overline{Q})^{-1/2} \|_{\max}   \\
&\le (\sqrt{\mu}\,\omega^2 r^2 + 3\omega r/2) / \sqrt{d}.
\end{align*}
Simplifying the bound using $r \omega \le 1/2$ and a trivial bound $\mu \ge 1$, we obtain (\ref{ineqn::Vbarbound}).
\end{proof}

\begin{proof}[ {\bf Proof of Lemma \ref{lem::match}}]
Using the identity in (\ref{ineqn:equiv}), it follows from Davis-Kahan $\sin \Theta$ theorem \citep{DavKah70} and Weyl's inequality that
\begin{equation*}
d(\widetilde{V}, V) \le \frac{ \|E \|_2}{\delta_r - \| E \|_2},
\end{equation*}
when $\delta_r > \| E \|_2$, where $\delta_r = |\lambda_r| - |\lambda_{r+1}|$. Since $\lambda_{r+1} \le \| A - A_r \|_2 \le  \varepsilon$ and $\| E \|_2 \le \tau$, the condition $|\lambda_r| - \varepsilon > 3 \tau$ implies $\delta_r > 3\| E \|_2$. Hence, we have $d(\widetilde{V}, V)  < 1/2$.  Moreover,
\begin{align*}
d(\overline{V}, V) & = \| \overline{V} \overline{V}^T - V V^T \|_2 \le \|\overline{V}  ( \overline{V} - V)^T \|_2 + \| ( \overline{V} - V) V^T \|_2 \\
&\le 2 \| \overline{V} - V \|_2 \le 2\sqrt{rd}\,  \| \overline{V} - V \|_{\max} \\
&\le 4r^{3/2} \sqrt{\mu} \, \omega,
\end{align*}
where we used a trivial inequality $\|M\|_2 \le \|M\|_F \le \sqrt{rd}\, \|M\|_{\max}$ for any $M \in \mathbb{R}^{d \times r}$. Under the condition $|\lambda_r| - \epsilon > 64(1+r\mu)  r^{3/2} \mu^{1/2} \kappa$, it is easy to check that $4r^{3/2} \sqrt{\mu} \, \omega \le 1/2$. Thus, we obtain $d(\overline{V}, V) < 1/2$. By the triangular inequality,
\begin{equation*}
d(\widetilde{V}, \overline{V}) \le d(\widetilde{V}, V) + d(\overline{V}, V) < 1.
\end{equation*}
Since $(\overline{V}, \overline{V}_\bot)^T \widetilde{A} (\overline{V}, \overline{V}_\bot)$ is a block diagonal matrix, $\mathrm{span}(\overline{V})$ is the same as the subspace spanned by $r$ eigenvectors of $\widetilde{A}$. We claim that $\mathrm{span}(\overline{V}) = \mathrm{span}(\widetilde{v}_1,\ldots, \widetilde{v}_r)$. Otherwise, there exists an eigenvector $u \in \mathrm{span}(\overline{V})$ whose associated eigenvalue is distinct from $\widetilde{\lambda}_1,\ldots,\widetilde{\lambda}_r$ (since $\delta_r > 3\| E \|_2$), and thus $u$ is orthogonal to $\widetilde{v}_1,\ldots, \widetilde{v}_r$. Therefore,
\begin{equation*}
\| (\widetilde{V} \widetilde{V}^T - \overline{V} \overline{V}^T) u \|_2 =  \| \overline{V} \overline{V}^T u \|_2 = \| u \|_2.
\end{equation*}
This implies $d(\widetilde{V}, \overline{V}) \ge 1$, which is a contradiction.
\end{proof}

\begin{proof}[{\bf Proof of Theorem \ref{thm::symindiv}}]
We split $\widetilde{V} - V$ into two parts---see (\ref{eqn::Vdecomp}). In the following, we first obtain a bound on $\| R - I_r \|_{\max}$, which then results in a bound on $\| \widetilde{V} - V \|_{\max}$.

Under the assumption of the theorem, $r\omega < 1/2$, so
\begin{equation} \label{ineqn::neatVbar}
\| \overline{V} \|_{\max} \le
 \| \overline{V} - V \|_{\max}  + \| V \|_{\max} \le ( 2\sqrt{\mu}\, r \omega)/\sqrt{d} + \sqrt{r\mu}/\sqrt{d} \le 2\sqrt{r \mu / d}\,.
\end{equation}
To bound $ \| R- I_r \|_{\max}$, we rewrite $R$ as $ R =  \overline{V}^T \overline{V} R = \overline{V}^T \widetilde{V}$. Expand $\overline{V}$ according to (\ref{def::Vbar}),
\begin{equation*}
R = (I_r + \overline{Q}^T \overline{Q} )^{-1/2} (V + \overline{Q})^T  \widetilde{V}\,.
\end{equation*}
Let us make a few observations: (a) $\| \overline{Q}^T\widetilde{V} \|_{\max} \le \sqrt{d} \| \overline{Q} \|_{\max} \le \omega$ by Cauchy-Schwarz inequality; (b) $\| \widetilde{V}^T V \|_{\max} \le 1$ by Cauchy-Schwarz inequality again; and (c) $\| ( I_r + \overline{Q}^T\overline{Q})^{-1/2} - I_r \|_{\max} \le r \omega^2$ by Lemma \ref{lem::taylor1}. Using these inequalities, we have
\begin{align}
\|R -  (V + \overline{Q})^T  \widetilde{V}\|_{\max} &\le r \| ( I_r + \overline{Q}^T\overline{Q})^{-1/2} - I_r  \|_{\max} \; \| (V + \overline{Q})^T  \widetilde{V} \|_{\max} \notag\\
& \le r^2 \omega^2  ( 1 + \omega)\,. \label{ineqn::Rpart1}
\end{align}
Furthermore, by Davis-Kahn $\sin \Theta$ theorem \citep{DavKah70} and Weyl's inequality, for any $i \in [r]$,
\begin{equation} \label{ineqn::dk}
\sin\theta(v_i, \widetilde{v}_i) = \sqrt{1 - \langle v_i, \widetilde{v}_i \rangle^2} \le \frac{\|E\|_2}{\delta - \| E \|_2}\,.
\end{equation}
when $\delta > \| E \|_2$ ($\delta$ is defined in Theorem \ref{thm::symindiv}). This leads to the bound $\sin\theta(v_i, \widetilde{v}_i) \le 2 \| E \|_2 / \delta$ (which is a simplified bound). This is because when $\delta \ge 2 \|E \|_2$, the bound is implied by (\ref{ineqn::dk}); when $\delta < 2 \|E \|_2$, the bound trivially follows from $\sin\theta(v_i, \widetilde{v}_i)  \le 1$. We obtain, up to sign, for $i \le r$,
\begin{equation}\label{ineqn::dkpost}
\sqrt{1 - \langle v_i, \widetilde{v}_i \rangle} \le \sqrt{1 - \langle v_i, \widetilde{v}_i \rangle^2}  \le   \frac{2 \| E \|_2}{\delta}\,.
\end{equation}
In other words, each diagonal entry of $I_r - V^T \widetilde{V} $, namely $1 - \langle v_i, \widetilde{v}_i \rangle $, is bounded by $4\| E \|_2^2/\delta^2$.  Since $\{ \widetilde{v}_i \}_{i=1}^{r}$ are orthonormal vectors, we have $ 1 - \langle v_i, \widetilde{v}_i \rangle^2 \ge \sum_{i' \neq i} \langle v_{i}, \widetilde{v}_{i'} \rangle^2 \ge   \langle v_{i}, \widetilde{v}_{j} \rangle^2 $ for any $i \ne j$, which leads to bounds on off-diagonal entries of $V^T \widetilde{V} - I_r$. We will combine the two bounds. Note that when $\delta \ge 2\| E \|_2$,
\begin{equation*}
\| V^T \widetilde{V} - I_r \|_{\max} \le \max \big\{ \frac{4\| E \|_2^2}{ \delta^2}, \frac{2\| E \|_2}{\delta} \big\} = \frac{2\| E \|_2}{\delta};
\end{equation*}
and when $\delta < 2\| E \|_2$, $\| V^T \widetilde{V} - I_r \|_{\max}$ is trivially bounded by $1$ (up to sign), which is trivially bounded by $2 \| E \|_2 / \delta$. In either case, we deduce
\begin{equation}\label{ineqn::V^TtildeV}
 \| V^T \widetilde{V} - I_r \|_{\max} \le \frac{2\| E \|_2}{\delta}.
\end{equation}
Using the bounds in (\ref{ineqn::Rpart1}) and (\ref{ineqn::V^TtildeV}) and $\| \overline{Q}^T\widetilde{V} \|_{\max} \le \omega$, we obtain
\begin{align*}
\| R - I_r \|_{\max} &\le \| R - (V + \overline{Q})^T \widetilde{V} \|_{\max} + \| V^T \widetilde{V} - I_r \|_{\max} + \| \overline{Q}^T \widetilde{V} \|_{\max} \notag \\
&\le r^2 \omega^2  ( 1 + \omega)  + \frac{2\| E \|_2}{\delta} + \omega\,.
\end{align*}
We use the inequality $r\omega < 1/2$ to simplify the above bound:
\begin{align}
\| R - I_r \|_{\max} &\le r^2\omega^2(1+\omega) + \omega + 2\| E \|_2 / \delta \le  (\frac{1}{2} + \frac{1}{4} + 1) r \omega + 2\| E \|_2 / \delta \notag\\
& \le 2r \omega + 2\| E \|_2 / \delta   \,. \label{ineqn::Rfinal2}
\end{align}

We are now ready to bound $\|\widetilde V - V\|_{\max}$. In (\ref{eqn::Vdecomp}), we use the bounds (\ref{ineqn::neatVbar}), (\ref{ineqn::Rfinal2}), (\ref{ineqn::Vbarbound}) to obtain
\begin{align*}
\| \widetilde{V} - V \|_{\max} &=  \| \overline{V}(R - I_r) + (\overline{V} -V) \|_{\max} \le r\| \overline{V} \|_{\max} \| R - I_r \|_{\max} + \| \overline{V} - V\|_{\max} \\
& \le 2r\sqrt{r \mu /d} \, ( 2r \omega +  2\| E \|_2 / \delta ) + 2r\sqrt{\mu}\, \omega /\sqrt{d} \\
& \le \frac{ (4r^{5/2}\mu^{1/2} + 2r \mu^{1/2}) \omega}{\sqrt{d}} + \frac{4 r^{3/2} \mu^{1/2} \| E \|_2}{\delta \sqrt{d}} \\
&\le \frac{ 48 (1+r\mu)r^{5/2}\mu^{1/2} \kappa}{(|\lambda_r| - \varepsilon)\sqrt{d}} + \frac{4 r^{3/2} \mu^{1/2} \| E \|_2}{\delta \sqrt{d}}.
\end{align*}
Using a trivial inequality $\kappa \le \sqrt{r\mu}\, \tau$, the above bound leads to
\begin{equation*}
\| \widetilde{V} - V \|_{\max} =  O\Big( \frac{r^4 \mu^2 \tau}{(|\lambda_r| - \varepsilon) \sqrt{d}} + \frac{r^{3/2}\mu^{1/2} \| E \|_2}{\delta \sqrt{d}} \Big).
\end{equation*}
\end{proof}

\section{Proofs for Section \ref{sec::asymm}}
Recall the definitions of $\mu_0$, $\tau_0$, $\kappa_0$ and $\varepsilon_0$ in Section \ref{sec:orgasym}. Similar to the symmetric case, we will use the following easily verifiable inequalities.
\begin{equation}\label{ineqn:easy}
\kappa_0 \le \sqrt{r \mu_0} \, \tau_0, \qquad \| E \|_2 \le \left( \sqrt{d_1/d_2}\, \| E \|_{\infty} \cdot \sqrt{d_2/d_1}\, \| E \|_{1} \right)^{1/2} \le \tau_0.
\end{equation}

\begin{lem}\label{lem::H2}
Parallel to Lemma \ref{lem::H}, we have
\begin{equation*}
\| H^d \|_{w} \le (1 + r \mu_0 )\kappa_0\,,
\end{equation*}
where $\kappa_0 = \sqrt{d_1} \, \|EV\|_{\max} \vee \sqrt{d_2} \| E^T U \|_{\max}$  as defined.
\end{lem}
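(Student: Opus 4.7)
The plan is to mirror the proof of Lemma~\ref{lem::H} (the symmetric analog) while tracking the block structure introduced by Hermitian dilation. The starting point is the identity $H^d = V_\bot^d (V_\bot^d)^T E^d V^d = (I_{d_1+d_2} - V^d(V^d)^T)\, E^d V^d$, which follows from $V_\bot^d (V_\bot^d)^T = I - V^d (V^d)^T$ and the definition $E_{21}^d = (V_\bot^d)^T E^d V^d$.

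First I would compute the two ingredients explicitly. A direct multiplication using $V^d = \tfrac{1}{\sqrt{2}} \begin{pmatrix} U & U \\ V & -V \end{pmatrix}$ shows that the projector $V^d(V^d)^T$ is block-diagonal with blocks $UU^T$ and $VV^T$, while the block form of $E^d$ gives $E^d V^d = \tfrac{1}{\sqrt{2}} \begin{pmatrix} EV & -EV \\ E^T U & E^T U \end{pmatrix}$. Combining these, $H^d$ becomes a $2 \times 2$ block matrix whose top $d_1$ rows equal $\tfrac{1}{\sqrt{2}}(I_{d_1} - UU^T) EV$ (up to a sign in the second block of columns) and whose bottom $d_2$ rows equal $\tfrac{1}{\sqrt{2}}(I_{d_2} - VV^T) E^T U$ (again, up to sign).

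Next I would bound each block in the weighted max-norm. Exactly as in the proof of Lemma~\ref{lem::H}, Cauchy--Schwarz together with the incoherence condition gives $\| UU^T \|_{\max} \le r \mu(U)/d_1 \le r\mu_0/d_1$ and similarly $\| VV^T \|_{\max} \le r\mu_0/d_2$. A trivial operator estimate then yields $\| UU^T EV \|_{\max} \le d_1 \| UU^T \|_{\max} \| EV \|_{\max} \le r\mu_0 \| EV \|_{\max}$, hence $\| (I_{d_1} - UU^T) EV \|_{\max} \le (1 + r\mu_0) \| EV \|_{\max}$, and symmetrically $\| (I_{d_2} - VV^T) E^T U \|_{\max} \le (1 + r\mu_0) \| E^T U \|_{\max}$. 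Rescaling the top $d_1$ rows by $\sqrt{d_1}$ and the bottom $d_2$ rows by $\sqrt{d_2}$ turns these into bounds of the form $\tfrac{1}{\sqrt{2}}(1+r\mu_0)\sqrt{d_1}\| EV \|_{\max}$ and $\tfrac{1}{\sqrt{2}}(1+r\mu_0)\sqrt{d_2}\| E^T U \|_{\max}$, both of which are at most $(1+r\mu_0)\kappa_0$ by the definition of $\kappa_0$.

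The argument is essentially mechanical given the symmetric analog; the only substantive point to watch is that the weighted max-norm $\| \cdot \|_w$ is precisely what is needed to absorb the mismatch between the natural $1/\sqrt{d_1}$ and $1/\sqrt{d_2}$ scalings in the two blocks of $H^d$. This is exactly the reason the weighted norm was introduced in Section~\ref{sec:orgasym}, and no additional technical obstacle arises.
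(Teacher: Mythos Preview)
Your proposal is correct and follows essentially the same route as the paper: both use $H^d = (I - V^d(V^d)^T)E^dV^d$, the block-diagonal form $V^d(V^d)^T = \mathrm{diag}(UU^T,VV^T)$, and the incoherence bounds $\|UU^T\|_{\max}\le r\mu_0/d_1$, $\|VV^T\|_{\max}\le r\mu_0/d_2$ to reach $(1+r\mu_0)\kappa_0$. The only difference is presentational---you write out the four blocks of $H^d$ explicitly, whereas the paper compresses this into the single line $\|H^d\|_w \le (1 + d_1\|UU^T\|_{\max}\vee d_2\|VV^T\|_{\max})\,\|E^dV^d\|_w$.
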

\begin{proof} [Proof of Lemma \ref{lem::H2}]
Recall $H^d =  V^d_\bot (V^d_\bot)^T E^d V^d = E^d V^d - V^d (V^{d})^{T} E^d V^d$. Note $V^d (V^{d})^{T} = \diag(UU^T, VV^T)$ and $\|UU^T\|_{\max} \le r \mu(U)/d_1$, $\|VV^T\|_{\max} \le r \mu(V)/d_2$. Thus,
\begin{align*}
\| H^d \|_{w} & \le  \|E^dV^d\|_{w} + \|V^d (V^d)^T E^d V^d\|_{w} \\
& \le (1 +  d_1 \|UU^T\|_{\max} \vee d_2 \|VV^T\|_{\max} ) \, \|E^d V^d\|_{w}  \le (1 +  r \mu_0 ) \kappa_0 \,.
\end{align*}
\end{proof}

\begin{lem}\label{lem::linop2}
Parallel to Lemma \ref{lem::linop}, if $\sigma_r > 2\kappa_0 r \sqrt{\mu_0}$, then $\overline{L}_1^d$ is a non-degenerate matrix. Furthermore, we have the following bound
\begin{equation}
\inf_{\|Q_0^d\|_{w} = 1} \| Q_0^d \overline{L}_1^d - \overline{L}_2^d Q_0^d \|_{w} \ge  \sigma_r - 4r\mu_0 (\tau_0 + r\kappa_0) - \varepsilon_0 \,,
\end{equation}
where $Q_0^d \in \mathbb{R}^{(d_1 + d_2) \times 2r}$.
\end{lem}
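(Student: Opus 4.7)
The plan is to mirror the proof of Lemma \ref{lem::linop} step for step, replacing the max-norm $\|\cdot\|_{\max}$ with the weighted max-norm $\|\cdot\|_w$ and the symmetric quantities $\mu, \tau, \kappa, \varepsilon$ with their dilated analogs $\mu_0, \tau_0, \kappa_0, \varepsilon_0$. I will work from the decomposition
\[
Q_0^d \overline{L}_1^d - \overline{L}_2^d Q_0^d \;=\; Q_0^d \Lambda_1^d + Q_0^d E_{11}^d - \overline{L}_2^d Q_0^d,
\]
extract a clean lower bound from the diagonal term, and show the other two terms are too small to cancel it.

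For the lower bound, $\Lambda_1^d = \diag\{\sigma_1, \ldots, \sigma_r, -\sigma_r, \ldots, -\sigma_1\}$ merely rescales columns, so if the weighted-norm-one matrix $Q_0^d$ has some row-rescaled entry of modulus $1$, that entry is still present (multiplied by $\pm \sigma_j$) in $Q_0^d \Lambda_1^d$; hence $\|Q_0^d \Lambda_1^d\|_w \ge \sigma_r$. For the middle term, write $E_{11}^d = (V^d)^T E^d V^d$ and factor as $(Q_0^d (V^d)^T)(E^d V^d)$. The weighting exactly cancels: an entry of $Q_0^d$ in the top $d_1$ (resp.\ bottom $d_2$) rows has modulus $\le 1/\sqrt{d_1}$ (resp.\ $\le 1/\sqrt{d_2}$), and analogous bounds hold for $U$ and $V$ inside $V^d$, so Cauchy--Schwarz delivers entrywise bounds on $Q_0^d (V^d)^T$ of order $\sqrt{r^2 \mu_0/(d_1\wedge d_2)}$; combined with $\|E^d V^d\|_w \le \kappa_0$ this produces $\|Q_0^d E_{11}^d\|_w \le \kappa_0 r \sqrt{\mu_0}$. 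For the third term, split $\overline{L}_2^d Q_0^d = V_\bot^d E_{22}^d (V_\bot^d)^T Q_0^d + (A^d - A_r^d) Q_0^d$. A direct block calculation on the second summand yields $\|(A^d - A_r^d) Q_0^d\|_w \le \sqrt{d_1/d_2}\|A-A_r\|_\infty \vee \sqrt{d_2/d_1}\|A-A_r\|_1 = \varepsilon_0$. For the first summand, use $V_\bot^d (V_\bot^d)^T = I - V^d (V^d)^T$ and expand; the analogs of the symmetric estimates $\|E Q_0\|_{\max} \le \tau$ and $\|EV\|_{\max}\cdot\|V^T Q_0\|_{\max} \le \kappa$ become $\|E^d Q_0^d\|_w \le \tau_0$ and $\|E^d V^d\|_w \cdot \|(V^d)^T Q_0^d\|_{\max} \le \kappa_0$, while the factor $(1 + r\mu_0)$ surfaces from $d_1\|UU^T\|_{\max}\vee d_2\|VV^T\|_{\max} \le r\mu_0$. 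Putting these together gives $\|\overline{L}_2^d Q_0^d\|_w \le (1 + r\mu_0)(\tau_0 + r\kappa_0) + \varepsilon_0$.

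Combining via the triangle inequality, I expect
\[
\|Q_0^d \overline{L}_1^d - \overline{L}_2^d Q_0^d\|_w \;\ge\; \sigma_r - \kappa_0 r\sqrt{\mu_0} - (1 + r\mu_0)(\tau_0 + r\kappa_0) - \varepsilon_0 \;\ge\; \sigma_r - 4r\mu_0(\tau_0 + r\kappa_0) - \varepsilon_0,
\]
which is the stated inequality. Non-degeneracy of $\overline{L}_1^d$ then follows by the same null-space argument as in Lemma \ref{lem::linop}: under $\sigma_r > 2\kappa_0 r\sqrt{\mu_0}$, one has $\|Q_0^d \overline{L}_1^d\|_w \ge \sigma_r - \kappa_0 r\sqrt{\mu_0} > 0$ for every unit-weighted-norm $Q_0^d$, ruling out a nontrivial left null vector. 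The main obstacle I anticipate is the bookkeeping of $\sqrt{d_1}$ versus $\sqrt{d_2}$ weights through every block expansion: unlike the symmetric case, the two dimensions enter asymmetrically and a stray $1/\sqrt{2}$ from the dilation of $V^d$ needs to be absorbed cleanly into the absolute constants without corrupting the final factor of $4$.
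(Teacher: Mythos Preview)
Your proposal is correct and follows essentially the same route as the paper's proof, which is itself just a terse sketch listing the four key block estimates (i)--(iv) and invoking the argument of Lemma~\ref{lem::linop}. One small bookkeeping point: the bound $\|Q_0^d E_{11}^d\|_w \le \kappa_0 r\sqrt{\mu_0}$ is off by a factor of $2$ (the dilation doubles the number of columns in $V^d$ and the sum over $j$ runs over $d_1+d_2$ terms split into two blocks), so the paper records $\|Q_0^d E_{11}^d\|_w \le 2\kappa_0 r\sqrt{\mu_0}$; this is exactly why the hypothesis reads $\sigma_r > 2\kappa_0 r\sqrt{\mu_0}$ rather than $\sigma_r > \kappa_0 r\sqrt{\mu_0}$, and the extra $2$ is absorbed into the final constant $4$ just as you anticipated.
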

\begin{proof}[Proof of Lemma \ref{lem::linop2}]
Following similar derivations with Lemma \ref{lem::linop}, we have $\| Q_0^d E_{11}^d \|_{w} \le 2\kappa_0 r \sqrt{\mu_0}$, and for any matrix $Q_0^d \in \mathbb{R}^{(d_1 + d_2) \times 2r}$ with $\| Q_0^d \|_w = 1$,
\begin{align*}
\| \overline{L}_2^d Q_0^d \|_{w} = \| (A^d - A_r^d)Q_0^d + V_\bot^d (V_\bot^d)^T E^d V_\bot^d (V_\bot^d)^TQ_0^d \|_w \le \varepsilon_0 + (1+r\mu_0)(\tau_0 + r \kappa_0).
\end{align*}
This can be checked by expressing $Q_0^d$ as a block matrix and expand the matrix multiplication. In particular, one can verify that (i) $ \| (A^d - A_r^d)Q_0^d \|_w \le \varepsilon_0$; (ii) For any matrix $M$ with $d_1+d_2$ rows, $\|V^d (V^d)^T M \|_w \le r\mu_0 \| M \|_w$; (iii) $\| E^d Q_0^d \|_w \le \tau_0$; (iv) $\| E^d V^d (V^d)^TQ_0^d\|_w \le r\kappa_0$. Moreover, $ \| Q_0^d \Lambda_1^d \|_{w} \ge \sigma_r \|Q_0^d \|_{w} \ge \sigma_r$. Thus,
\begin{align*}
\inf_{\|Q_0^d\|_{w} = 1} \| Q_0^d \overline{L}_1^d - \overline{L}_2^d Q_0^d \|_{w} \ge \sigma_r - 4r\mu_0(\tau_0 + r\kappa_0) - \varepsilon_0\,,
\end{align*}
which is the desired inequality in the lemma. In addition, $\overline{L}_1^d$ is non-degenerate if $\sigma_r > 2 \kappa_0 r \sqrt{\mu_0} > 0$.
\end{proof}

\begin{lem} \label{lem:main2}
Parallel to Lemma \ref{lem:main1}, there is a solution $\overline{Q}^d \in \mathcal{B}_0$ to the system (\ref{eqn:2_2}) such that if $\sigma_r - \varepsilon_0 > 16r\mu_0 (\tau_0 + r\kappa_0)$, then
\begin{equation*}
\| \overline{Q}^d \|_{w} \le  \frac{8(1+r \mu_0 ) \kappa_0}{3( \sigma_r  -  \varepsilon_0) }\,.
\end{equation*}
\end{lem}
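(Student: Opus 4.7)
The plan is to mirror the proof of Lemma \ref{lem:main1} by invoking Lemma \ref{lem::borrow}, but with the weighted max-norm in place of the ordinary max-norm. Take the Banach space $\mathcal{B} = \mathbb{R}^{(d_1+d_2)\times 2r}$ equipped with $\|\cdot\|_w$, the closed subspace $\mathcal{B}_0 = \{V^d_\bot Q^d : Q^d \in \mathbb{R}^{(d_1+d_2-2r)\times 2r}\}$, the linear operator $T(\overline{Q}^d) = \overline{Q}^d \overline{L}_1^d - \overline{L}_2^d \overline{Q}^d$, the quadratic map $\varphi(\overline{Q}^d) = -\overline{Q}^d(H^d)^T \overline{Q}^d$, and $y = H^d$. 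Then $T\overline{Q}^d = y + \varphi(\overline{Q}^d)$ coincides with the quadratic equation \eqref{eqn:2_2}. Lemma \ref{lem::H2} gives $\|H^d\|_w \le (1+r\mu_0)\kappa_0$ and Lemma \ref{lem::linop2} gives $\beta := \|T^{-1}\|_w^{-1} \ge \sigma_r - 4r\mu_0(\tau_0+r\kappa_0) - \varepsilon_0$, which is strictly positive under our hypothesis.

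The main technical step is to verify the quadratic bounds in \eqref{ineqn::phi} for $\varphi$ under the weighted norm. Writing the $(i,j)$-entry of $\overline{Q}^d(H^d)^T \overline{Q}^d$ as $\sum_{k=1}^{2r}\sum_{\ell=1}^{d_1+d_2} (\overline{Q}^d)_{ik}(H^d)_{\ell k}(\overline{Q}^d)_{\ell j}$, I would multiply by the row weight $w_i \in \{\sqrt{d_1},\sqrt{d_2}\}$, pull out $w_i|(\overline{Q}^d)_{ik}| \le \|\overline{Q}^d\|_w$ from the outer sum, and split the inner $\ell$-sum at $\ell=d_1$. On the top block $\sum_{\ell\le d_1}|(H^d)_{\ell k}(\overline{Q}^d)_{\ell j}| \le d_1 \cdot (\|H^d\|_w/\sqrt{d_1})(\|\overline{Q}^d\|_w/\sqrt{d_1}) = \|H^d\|_w\|\overline{Q}^d\|_w$, and the bottom block gives the same, for a combined factor of $2$. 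Summing over $k \in [2r]$ yields
\[
\|\varphi(\overline{Q}^d)\|_w \le 4r\|H^d\|_w\|\overline{Q}^d\|_w^2,
\]
so we may take $\eta = 4r\|H^d\|_w$. The Lipschitz-type inequality in \eqref{ineqn::phi} follows from the identity $\varphi(\overline{Q}^d_1)-\varphi(\overline{Q}^d_2) = \overline{Q}^d_1(H^d)^T(\overline{Q}^d_1-\overline{Q}^d_2) + (\overline{Q}^d_1-\overline{Q}^d_2)(H^d)^T\overline{Q}^d_2$ by the same row-splitting estimate.

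Next I would check the structural invariances. Clearly $\varphi(V^d_\bot Q^d) = -V^d_\bot Q^d(H^d)^T V^d_\bot Q^d \in \mathcal{B}_0$. For $T^{-1}(\mathcal{B}_0)\subseteq \mathcal{B}_0$, suppose $\overline{Q}^d_0 := T^{-1}(\overline{Q}^d)$ for some $\overline{Q}^d \in \mathcal{B}_0$; then $\overline{Q}^d_0 \overline{L}_1^d - \overline{L}_2^d \overline{Q}^d_0 \in \mathcal{B}_0$, and since $\overline{L}_2^d = V^d_\bot(\Lambda_2^d + E_{22}^d)(V^d_\bot)^T$ acts from the left by $V^d_\bot$, the term $\overline{L}_2^d \overline{Q}^d_0 \in \mathcal{B}_0$, forcing $\overline{Q}^d_0 \overline{L}_1^d \in \mathcal{B}_0$. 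Invertibility of $\overline{L}_1^d$ (guaranteed by Lemma \ref{lem::linop2} since $\sigma_r > 2r\sqrt{\mu_0}\,\kappa_0$ under our assumption) then yields $\overline{Q}^d_0 \in \mathcal{B}_0$.

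The last verification is the smallness condition $4\eta\|H^d\|_w < \beta^2$, i.e.\ $16r(1+r\mu_0)^2\kappa_0^2 < (\sigma_r - 4r\mu_0(\tau_0+r\kappa_0)-\varepsilon_0)^2$. Under $\sigma_r - \varepsilon_0 > 16r\mu_0(\tau_0+r\kappa_0)$, the right-hand side exceeds $\bigl(\tfrac{3}{4}(\sigma_r-\varepsilon_0)\bigr)^2 \ge (12r^2\mu_0\kappa_0)^2$, and the inequality follows from $(1+r\mu_0)^2 \le 9 r^3 \mu_0^2$ for $r,\mu_0 \ge 1$. Lemma \ref{lem::borrow} then delivers a solution $\overline{Q}^d \in \mathcal{B}_0$ to \eqref{eqn:2_2} with
\[
\|\overline{Q}^d\|_w \le \frac{2\|H^d\|_w}{\beta} \le \frac{2(1+r\mu_0)\kappa_0}{(3/4)(\sigma_r-\varepsilon_0)} = \frac{8(1+r\mu_0)\kappa_0}{3(\sigma_r-\varepsilon_0)},
\]
as claimed. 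The principal obstacle is the quadratic estimate on $\varphi$: the weighted norm is not submultiplicative in the naive sense, so one must split the row index at $d_1$ to absorb the asymmetry of the two block sizes; once that is done, the remainder of the argument parallels the symmetric case verbatim.
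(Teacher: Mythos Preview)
Your proof is correct and follows essentially the same route as the paper's: both invoke Lemma~\ref{lem::borrow} with identical choices of $\mathcal{B}$, $\mathcal{B}_0$, $T$, $\varphi$, and $y=H^d$, obtain the same quadratic estimate $\|\varphi(\overline{Q}^d)\|_w \le 4r\|H^d\|_w\|\overline{Q}^d\|_w^2$ (the paper packages your row-splitting calculation as the general inequality $\|M_1M_2^TM_3\|_w \le 4r\|M_1\|_w\|M_2\|_w\|M_3\|_w$), verify $4\eta\|H^d\|_w < \beta^2$ under the stated hypothesis, and conclude with the same final bound via $\beta \ge \tfrac{3}{4}(\sigma_r-\varepsilon_0)$.
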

\begin{proof}[Proof of Lemma \ref{lem:main2}]
We again invoke Lemma \ref{lem::borrow}.
Let $\mathcal{B}$ be the space $\mathbb{R}^{(d_1+d_2) \times 2r}$ equipped with the weighted max-norm $\| \cdot \|_{w}$.  We also define $\mathcal{B}_0$ as a subspace of $\mathcal{B}$ consisting of matrices of the form $V_\bot^d Q^d$ where $Q^d$ has size $(d_1+d_2-2r) \times 2r$. Let the linear operator $\mathcal{T}^d$ be $\mathcal{T}^d \, \overline{Q}^d := \overline{Q}^d \overline{L}_1^d - \overline{L}_2^d \overline{Q}^d$. First notice from Lemma \ref{lem::linop2}, $\mathcal{T}^d$ is a linear operator with bounded inverse, i.e., $\beta := \| (\mathcal{T}^d)^{-1} \|_{w}^{-1}$ is bounded from below by
\begin{equation*}
\beta \ge \sigma_r - 4r \mu_0 (\tau_0 + r\kappa_0) - \varepsilon_0\,.
\end{equation*}
Let $\varphi$ be a map given by $\varphi(\overline{Q}^d) = \overline{Q}^d (H^d)^T \overline{Q}^d$. Note that $H^d \in \mathcal{B}$. Using the (easily verifiable) inequality
\begin{equation}\label{ineq:multiply}
\|M_1M_2^T M_3 \|_w \le 2r \| M_1 \|_w \| M_2^T M_3 \|_{\max} \le 4r \|M_1\|_w \|M_2\|_w \|M_3\|_w \quad \forall \, M_1,M_2,M_3 \in \mathcal{B},
\end{equation}
we derive, by the bound on $\| H^d \|_w$ (Lemma \ref{lem::H2}), that
\begin{align*}
\| \varphi(\overline{Q}^d) \|_{w} \le 4r \| H^d \|_{w} \| \overline{Q}^d \|_{w}^2 \le 4r(1 + r\mu_0) \kappa_0  \, \| \overline{Q}^d \|_{w}^2  \, .
\end{align*}
Moreover, using the inequality (\ref{ineq:multiply}) and the bound on $\| H^d \|_w$ (Lemma \ref{lem::H2}),
\begin{align*}
\| \varphi(\overline{Q}_1^d) - \varphi(\overline{Q}_2^d) \|_{w} & \le 4r \| \overline{Q}_1^d \|_{w} \| H^d \|_{w} \| \overline{Q}_1^d - \overline{Q}_2^d \|_{w} +  4r \| \overline{Q}_1^d - \overline{Q}_2^d \|_{w} \| H^d \|_{w} \| \overline{Q}_2^d \|_{w} \\
& \le 8r(1 + r\mu_0) \kappa_0 \, \max\{ \| \overline{Q}_1^d \|_{w}, \| \overline{Q}_2^d \|_{w} \} \| \overline{Q}_1^d - \overline{Q}_2^d \|_{w}.
\end{align*}
Thus, we can choose $\eta = 4r (1 + r\mu_0) \kappa_0$, and the condition (\ref{ineqn::phi}) in Lemma \ref{lem::borrow} is satisfied. To ensure $4 \eta \| H^d \|_w < \beta^2$, it suffices to require (again by Lemma \ref{lem::H2}),
\begin{equation*}
16 r(1+r\mu_0)^2\kappa_0^2 < \big[ \sigma_r - 4r \mu_0 (\tau_0+r\kappa_0) - \varepsilon_0 \big]^2.
\end{equation*}
It is easily checkable that the above inequality holds when $\sigma_r - \varepsilon_0 > 16r\mu_0 (\tau_0 + r\kappa_0)$. Under this condition, by Lemma \ref{lem::borrow},
\begin{equation*}
\| \overline{Q}^d \|_{w} \le \frac{2 \| H^d \|_w}{\beta} \le  \frac{2(1+r\mu_0) \kappa_0}{ \sigma_r - 4r\mu (\tau_0 + r\kappa_0) - \varepsilon_0 } \le \frac{2(1+r \mu_0 ) \kappa_0}{ \sigma_r  - \varepsilon_0 - ( \sigma_r  - \varepsilon_0)/4 }  \le \frac{8(1+r \mu_0 ) \kappa_0}{3( \sigma_r  -  \varepsilon_0) }\,,
\end{equation*}
which completes the proof.
\end{proof}

\begin{proof}[{\bf Proof of Lemma \ref{lem::Qbar2}}]
The first claim of the lemma (existence of $\overline{Q}^d$ and its max-norm bound) follows from Lemma \ref{lem:main2}. To prove the second claim, we split $\overline{V}^d - V^d$ into two parts:
\begin{align}
\overline{V}^d - V^d = V^d \left( (I_{2r} + (\overline{Q}^d)^T\,\overline{Q}^d)^{-1/2} - I_{2r} \right) + \overline{Q}^d(I_{2r} +(\overline{Q}^d)^T\,\overline{Q}^d)^{-1/2},
\end{align}
Note $\kappa_0 \le \tau_0 \sqrt{r\mu_0}$ (see (\ref{ineqn:easy})). It can be checked that the condition $\sigma_r - \varepsilon_0 > 16r\mu_0 (\tau_0 + r\kappa_0)$ implies $r \omega_0 < 1/3$.  Since $\| (\overline{Q}^d)^T \overline{Q}^d \|_{\max} \le 2\omega_0^2 $ and $ \| (\overline{Q}^d)^T \overline{Q}^d \|_2 \le 2r \| (\overline{Q}^d)^T \overline{Q}^d \|_{\max} \le 4 r\omega_0^2 < 1/2$, similar to Lemma \ref{lem::taylor1}, we have
\begin{align}
& \| ( I_{2r} + (\overline{Q}^d)^T \overline{Q}^d)^{-1/2} - I_{2r} \|_{\max} \le 4 r \omega_0^2,  \label{ineqn::taylor2}  \\
& \| (I_{2r} +(\overline{Q}^d)^T \overline{Q}^d)^{-1/2} \|_{\max} \le 3/2.  \notag
\end{align}
This yields
\begin{align}
\| \overline{V}^d - V^d \|_{w} & = \|V^d \left( (I_{2r} + (\overline{Q}^d)^T \overline{Q}^d )^{-1/2} - I_{2r} \right)\|_w + \| \overline{Q}^d (I_{2r} + (\overline{Q}^d)^T \overline{Q}^d )^{-1/2}\|_{w} \nonumber \\
& \le  \sqrt{2r^2 \mu_0} \,\, 4r \omega_0^2  +  2r \cdot 3/2 \cdot \| \overline{Q}^d \|_{w} \le 8\sqrt{\mu_0}\, \omega_0^2 r^2 + 3 \, \omega_0 r \\
& \le 8\sqrt{\mu_0}\, \omega_0 r / 3 + 3\sqrt{\mu_0}\, \omega_0 r \le  6 \sqrt{\mu_0} \, r \omega_0 . \label{eqn::Vdecomp1Prime}
\end{align}
\end{proof}

\begin{lem} \label{lem::match2}
Suppose $\sigma_r - \varepsilon_0 > \max \{ 16r\mu_0 (\tau_0 + r \kappa_0), 64r^{3/2}\mu_0^{1/2}(1+r\mu_0)\kappa_0 \}$. Then, there exists an orthogonal matrix $R_V \in \mathbb{R}^{r \times r}$ (or $R_U$) such that the column vectors of $\overline{V}R_V$ (and $\overline{U}R_U$) are the top $r$ right (and left) singular vectors of $\widetilde{A}$.
\end{lem}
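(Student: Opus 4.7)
The plan is to mirror the proof of Lemma \ref{lem::match} applied to the Hermitian dilations $A^d, \widetilde{A}^d$, and then to decouple the resulting rotation on $\mathbb{R}^{2r}$ into a pair $R_U, R_V \in \mathbb{R}^{r \times r}$. Throughout I will use $\|E^d\|_2 = \|E\|_2 \le \tau_0$ from \eqref{ineqn:easy} and $\sigma_{r+1} \le \|A - A_r\|_2 \le \varepsilon_0$, the latter via $\|A-A_r\|_2^2 \le \|A-A_r\|_1 \|A-A_r\|_\infty$.

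First I would bound $d(\widetilde{V}^d, V^d)$. The nonzero eigenvalues of $A^d$ are $\pm\sigma_i$, so the gap between the top $2r$ eigenvalues in magnitude and the remainder equals $\sigma_r - \sigma_{r+1} \ge \sigma_r - \varepsilon_0$. Weyl's inequality combined with Davis--Kahan applied to $A^d$, together with the assumption $\sigma_r - \varepsilon_0 > 16r\mu_0(\tau_0 + r\kappa_0) \gg \tau_0$, yields $d(\widetilde{V}^d, V^d) < 1/2$. Next, Lemma \ref{lem::Qbar2} gives $\|\overline{V}^d - V^d\|_w \le 6\sqrt{\mu_0}\, r \omega_0$, which I convert to a spectral-norm bound by partitioning rows: the top $d_1$ entries of $\overline{V}^d - V^d$ are bounded by $6\sqrt{\mu_0}\,r\omega_0/\sqrt{d_1}$ and the bottom $d_2$ entries by $6\sqrt{\mu_0}\,r\omega_0/\sqrt{d_2}$, so $\|\overline{V}^d - V^d\|_F \le 12 r^{3/2}\sqrt{\mu_0}\,\omega_0$. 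Hence $d(\overline{V}^d, V^d) \le 2\|\overline{V}^d - V^d\|_F$ is controlled by $24 r^{3/2}\sqrt{\mu_0}\,\omega_0$, and the second branch of the hypothesis (possibly after sharpening the absolute constant) makes this $< 1/2$. By the triangle inequality $d(\widetilde{V}^d, \overline{V}^d) < 1$. Since $\mathrm{span}(\overline{V}^d)$ is invariant under $\widetilde{A}^d$ by construction of $\overline{V}^d$ from \eqref{eqn:2_2}, the same contradiction argument as in Lemma \ref{lem::match} (any leftover eigenvector in $\mathrm{span}(\overline{V}^d) \cap \mathrm{span}(\widetilde{V}^d)^\perp$ would force $d \ge 1$) forces $\mathrm{span}(\overline{V}^d) = \mathrm{span}(\widetilde{V}^d)$, so there exists an orthogonal $R^d \in \mathbb{R}^{2r\times 2r}$ with $\overline{V}^d R^d = \widetilde{V}^d$.

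The main obstacle is decoupling $R^d$ into the two separate rotations $R_U$ and $R_V$ claimed by the lemma. Both $V^d$ and $\widetilde{V}^d$ possess the paired block structure dictated by the dilation: their top $d_1$ rows are $\tfrac{1}{\sqrt 2}[U,U]$ and $\tfrac{1}{\sqrt 2}[\widetilde{U},\widetilde{U}]$, and their bottom $d_2$ rows are $\tfrac{1}{\sqrt 2}[V,-V]$ and $\tfrac{1}{\sqrt 2}[\widetilde{V},-\widetilde{V}]$. Equivalently, both matrices satisfy $P M J = M$, where $P = \mathrm{diag}(I_{d_1}, -I_{d_2})$ and $J$ is the $2r\times 2r$ swap of the first and last $r$ coordinates. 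I would verify that $E^d$ anticommutes with $P$ and that the operators appearing in \eqref{eqn:2_2} are equivariant in the matching sense, so that the fixed-point iteration in the proof of Lemma \ref{lem::Qbar2}, initialised at $\overline{Q}^0 = 0$, stays in this symmetry class; consequently $\overline{V}^d$ inherits the same paired form, with blocks $\overline{U} \in \mathbb{R}^{d_1 \times r}$ and $\overline{V} \in \mathbb{R}^{d_2 \times r}$ that define the objects appearing in the lemma. Writing $R^d$ as a $2\times 2$ block matrix of $r\times r$ blocks and matching the four blocks of $\overline{V}^d R^d = \widetilde{V}^d$ forces $R^d_{11} = R^d_{22}$ and $R^d_{12} = R^d_{21}$; the orthogonality of $R^d$ then makes $R_U := R^d_{11} + R^d_{12}$ and $R_V := R^d_{11} - R^d_{12}$ each orthogonal (the cross terms $(R^d_{11})^T R^d_{12} + (R^d_{12})^T R^d_{11}$ vanish), and the block identities read $\overline{U} R_U = \widetilde{U}$ and $\overline{V} R_V = \widetilde{V}$, as required. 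The delicate step will be the verification of parity preservation by the iteration; it depends on choosing the ordering of $\Lambda_1^d$ and the completion $V_\bot^d$ compatibly with the symmetry $PMJ = M$.
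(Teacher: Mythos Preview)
Your approach is correct but more roundabout than the paper's. The paper works directly with the right (and left) singular vectors rather than through the dilation: it applies Wedin's $\sin\Theta$ theorem to $A,\widetilde{A}$ to bound $d(\widetilde{V},V)\le \|E\|_2/(\sigma_r-\|E\|_2)<1/2$, extracts $\overline{V}$ from the bottom $d_2$ rows of $\overline{V}^d$, bounds $d(\overline{V},V)\le 12r^{3/2}\mu_0^{1/2}\omega_0$ via Lemma~\ref{lem::Qbar2}, and then runs the contradiction argument of Lemma~\ref{lem::match} at the level of $V,\widetilde{V},\overline{V}$ directly (and says the same holds for $U$). No $R^d$ is ever formed and no decoupling is needed. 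That said, your parity argument actually fills a point the paper glosses over: when the paper ``looks at the last $d_2$ dimensions'' and treats $\mathrm{span}(\overline{V})$ as a right singular subspace of $\widetilde{A}$, it is silently assuming $\overline{V}^d$ inherits the paired block form $\tfrac{1}{\sqrt2}\bigl[\begin{smallmatrix}\overline{U}&\overline{U}\\\overline{V}&-\overline{V}\end{smallmatrix}\bigr]$; your $PMJ=M$ symmetry and its preservation by the fixed-point iteration is exactly what would justify this. One minor constant issue: bounding the full $(d_1{+}d_2)\times 2r$ block in Frobenius norm costs you a factor of $2$ over the paper's $d_2\times r$ computation, so your $d(\overline{V}^d,V^d)$ is only $<1$ under the stated constant $64$, whereas the paper's $d(\overline{V},V)$ lands exactly at $<1/2$.
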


\begin{proof}[Proof of Lemma \ref{lem::match2}]
Similar to the proof of Lemma \ref{lem::match}, we will prove $d(\overline{V}, V) < 1/2$ and $d(\widetilde{V}, V) \le 1/2$, which would then imply that $\overline{V}$ and $\widetilde{V}$ are the same only up to an orthogonal transformation. The same is true for $\overline{U}$ and $\widetilde{U}$, and we will leave out its proof.

By Weyl's inequality for singular values (also known as Mirsky's theorem \citep{Mir60}), for any $i$, $|\widetilde{\sigma}_i - \sigma_i| \le \| E \|$. By Wedin's perturbation bounds for singular vectors \citep{Wed72},
\begin{equation*}
d(\widetilde{V}, V) \le \frac{ \| E \|_2}{\sigma_r - \| E \|_2}.
\end{equation*}
Note that $\| E \|_2\le \tau_0$ (see \eqref{ineqn:easy}) Under the assumption in the lemma, clearly $\sigma_r - \varepsilon_0 > 3 \tau_0$, and we have $d(\widetilde{V}, V) \le 1/2$. Moreover, by Lemma \ref{lem::Qbar2}, we have $\| \overline{V}^d - V^d \|_w \le 6 r \omega_0 \sqrt{\mu_0}$. Note that each column vector of $V^d$ and $\overline{V}^d$ are $(d_1+d_2)$-dimensional. Looking at the last $d_2$ dimensions, we have $ \| \overline{V} - V \|_{\max} \le  6 r \omega_0 \sqrt{\mu_0 / d_1}$.
\begin{equation*}
d(\overline{V}, V) \le 2 \| \overline{V} - V \| \le 2\sqrt{rd_1}  \| \overline{V} - V \|_{\max} \le 12r^{3/2} \mu_0^{1/2} \omega_0.
\end{equation*}
Under the assumption of the lemma, $d(\overline{V}, V) \le 1/2$. Therefore, we deduce $d(\widetilde{V}, \overline{V}) = 0$, and conclude that there exists an orthogonal matrix $R_V \in \mathbb{R}^{r \times r}$ such that $\widetilde{V} =  \overline{V} R_V $.

\end{proof}

\begin{proof}[{\bf Proof of Theorem \ref{thm::assymbulk}}]
Lemma \ref{lem::Qbar2}, together with Lemma \ref{lem::match2}, implies Theorem \ref{thm::assymbulk}.
\end{proof}

\begin{proof}[{\bf Proof of Theorem \ref{thm::assymindiv}}]
Similar to the proof of Theorem \ref{thm::symindiv}, we first split the difference $\widetilde{V}^d - V^d$:
\begin{equation}\label{eqn::VdecompPrime}
\widetilde{V}^d - V^d = \overline{V}^d (R^d - I_{2r}) + (\overline{V}^d -V^d).
\end{equation}
To bound the first term, note that under our assumption, $r \omega_0 < 1/3$ (derived in the proof of Lemma \ref{lem::Qbar2}), it is easy to check $ \| \overline{V}^d \|_w \le 3\sqrt{r\mu_0}$.
We rewrite the matrix $R^d$ as
\begin{equation*}
R^d = (I_{2r} + (\overline{Q}^d)^T \overline{Q}^d )^{-1/2} (V^d + \overline{Q}^d)^T  \widetilde{V}^d\,.
\end{equation*}
Notice that $\| (\overline{Q}^d)^T\widetilde{V}^d \|_{\max} \le \sqrt{2} \| \overline{Q}^d \|_{w} \le \sqrt{2} \, \omega_0$, $\| (\widetilde{V}^d)^T V^d \|_{\max} \le 1$ and
\begin{align*}
\|R^d -  (V^d + \overline{Q}^d)^T  \widetilde{V}^d\|_{\max} & \le 2 r \| ( I_{2r} + (\overline{Q}^d)^T\overline{Q}^d)^{-1/2} - I_{2r}  \|_{\max} \; \| (V^d + \overline{Q}^d)^T  \widetilde{V}^d \|_{\max} \\
& \le 8 r^2 \omega_0^2  ( 1 + \sqrt{2}\, \omega_0)\,.
\end{align*}
where we used (\ref{ineqn::taylor2}). Following the same derivations as in the proof of Theorem \ref{thm::symindiv}, and using the (easily verifiable) fact $\| E^d \|_2 = \| E \|_2$, we can bound $\| (V^d)^T \widetilde{V}^d - I_{2r} \|_{\max}$ by $2\|E\|_2/\delta_0$. Thus, using $r \omega_0 \le 1/3$, under $\delta_0 > 2 \| E \|_2$, we have
\begin{align}
\| R^d - I_{2r} \|_{\max} &\le\|R^d -  (V^d + \overline{Q}^d)^T  \widetilde{V}^d\|_{\max} + \| (V^d)^T \widetilde{V}^d - I_{2r} \|_{\max} + \| (\overline{Q}^d)^T \widetilde{V}^d \|_{\max} \notag \\
&\le 8 r^2 \omega_0^2  ( 1 + \sqrt{2}\, \omega_0) + \frac{2\|E\|_2}{\delta_0} + \sqrt{2} \, \omega_0 < 4\sqrt{2} \, r\omega_0 + \frac{2\|E\|_2}{\delta_0}\,. \label{ineqn::Rfinal1Prime}
\end{align}
Finally, in order to bound $\|\widetilde V^d - V^d\|_{w}$, we use (\ref{eqn::Vdecomp1Prime}), (\ref{eqn::VdecompPrime}) and (\ref{ineqn::Rfinal1Prime}), and derive
\begin{align*}
\| \widetilde{V}^d - V^d \|_{w} &=  \| \overline{V}^d(R^d - I_{2r}) + (\overline{V}^d -V^d) \|_{w} \le 2r\| \overline{V}^d \|_{w} \| R^d - I_{2r} \|_{\max} + \| \overline{V}^d - V^d\|_{w} \\
& \le 6r\sqrt{r \mu_0} \Big( 4\sqrt{2} \, r \omega_0 + \frac{2\|E\|_2}{\delta_0}\Big) + 6r\sqrt{\mu_0}\, \omega_0 \le \Big(40 r^{5/2} \,\omega_0 + 12r^{3/2}  \, \frac{\|E\|_2}{\delta_0}\Big) \cdot \sqrt{\mu_0} \\
& \le \frac{107 r^{5/2} \mu_0^{1/2}(1+r\mu_0)\kappa_0}{\sigma_r - \varepsilon_0}+ \frac{12r^{3/2}\mu_0^{1/2}\|E\|_2}{\delta_0} \\
& = O \Big( \frac{r^4\mu_0^2 \tau_0}{\sigma_r - \varepsilon_0} + \frac{r^{3/2} \mu_0^{1/2}\|E\|_2}{\delta_0}  \Big).
\end{align*}
This completes the proof.
\end{proof}

\section{Proofs for Section \ref{sec:app}}

\begin{proof}[ {\bf Proof of Proposition \ref{prop:fm}}]
Note first by Weyl's inequality, $|\lambda_i - \overline\lambda_i| \le \|\Sigma_u\| \le C$. So this implies that $\overline{\lambda}_i = \lambda_i(B^T B) \asymp d$ if and only if $\lambda_i =\lambda_i(\Sigma) \asymp d$ for $i \le r$. And furthermore the eigenvalues of $B^T B / d$ are distinct if and only if $\min_{1 \le i \ne j \le r} |\lambda_i(\Sigma) - \lambda_j(\Sigma)| / \lambda_{j}(\Sigma) > 0$.

To prove the equivalency of bounded $\|B\|_{\max}$ and bounded coherence. We first prove the necessary condition.
Again from Weyl's inequality, $\lambda_i(\Sigma) \le C$ for $i \ge r+1$. If $\mu(V)$ is bounded, $\Sigma_{ii}$ must also be bounded, since $\Sigma_{ii} \le \sum_{j= 1}^r v_{ij}^2 \lambda_j(\Sigma) + \lambda_{r+1}(\Sigma) \le C ( \mu(V) + 1)$. Therefore $\|b_i\|^2 \le \|b_i\|^2 + (\Sigma_u)_{ii} = \Sigma_{ii}$ implies $\|B\|_{\max}$ is bounded.  Namely, the factors are pervasive.

On the contrary, if pervasiveness holds, we need to prove that $\mu(V)$ is bounded. Let $B = (\widetilde b_1, \dots, \widetilde b_r)$. Obviously $\overline{\lambda}_i = \|\widetilde b_i\|^2 \asymp d$ and $\overline{v}_i = \widetilde b_i/\|\widetilde b_i\|$. Without loss of generality, assume $\bar\lambda_i$'s are decreasing. So $\|\overline{v}_i\|_{\infty} \le \|B\|_{\max}/\|\widetilde b_i\| \le C/\sqrt{d}$ and $\mu(\overline V) \le C$ where $\overline V = (\overline{v}_1, \dots, \overline{v}_r)$.
By  Theorem \ref{thm::symindiv},
$$
\|\overline{v}_i - v_i\|_{\infty} \le C \frac{\|\Sigma_u\|_{\infty}}{\overline{\gamma}\sqrt{d}}\,,
$$
where $\overline{\gamma} = \min\{\overline{\lambda}_i - \overline{\lambda}_{i+1} : 1 \le i \le r\} \asymp d$ with the convention $\overline{\lambda}_{r +1}=0$. Hence, we have $\|v_i\|_{\infty} \le C/\sqrt{d}$, which implies bounded coherence $\mu(V)$.
\end{proof}

\bibliographystyle{ims}
\bibliography{Reference}

\begin{thebibliography}{51}
\expandafter\ifx\csname natexlab\endcsname\relax\def\natexlab#1{#1}\fi
\expandafter\ifx\csname url\endcsname\relax
  \def\url#1{\texttt{#1}}\fi
\expandafter\ifx\csname urlprefix\endcsname\relax\def\urlprefix{URL }\fi

\bibitem[{Antoniadis and Fan(2001)}]{AntFan01}
\textsc{Antoniadis, A.} and \textsc{Fan, J.} (2001).
\newblock Regularization of wavelet approximations.
\newblock \textit{Journal of the American Statistical Association} \textbf{96}.

\bibitem[{Bai(2003)}]{Bai03}
\textsc{Bai, J.} (2003).
\newblock Inferential theory for factor models of large dimensions.
\newblock \textit{Econometrica} \textbf{71} 135--171.

\bibitem[{Baik et~al.(2005)Baik, Ben~Arous and P{\'e}ch{\'e}}]{BaiAroPec05}
\textsc{Baik, J.}, \textsc{Ben~Arous, G.} and \textsc{P{\'e}ch{\'e}, S.}
  (2005).
\newblock Phase transition of the largest eigenvalue for nonnull complex sample
  covariance matrices.
\newblock \textit{Annals of Probability}  1643--1697.

\bibitem[{Bartholomew et~al.(2011)Bartholomew, Knott and Moustaki}]{Kno99}
\textsc{Bartholomew, D.~J.}, \textsc{Knott, M.} and \textsc{Moustaki, I.}
  (2011).
\newblock \textit{Latent variable models and factor analysis: A unified
  approach}, vol. 904.
\newblock John Wiley \& Sons.

\bibitem[{Berthet and Rigollet(2013)}]{BerRig13}
\textsc{Berthet, Q.} and \textsc{Rigollet, P.} (2013).
\newblock Optimal detection of sparse principal components in high dimension.
\newblock \textit{The Annals of Statistics} \textbf{41} 1780--1815.

\bibitem[{Bickel and Levina(2008)}]{BicLev08}
\textsc{Bickel, P.~J.} and \textsc{Levina, E.} (2008).
\newblock Covariance regularization by thresholding.
\newblock \textit{The Annals of Statistics}  2577--2604.

\bibitem[{Biswas and Ye(2004)}]{BisYe04}
\textsc{Biswas, P.} and \textsc{Ye, Y.} (2004).
\newblock Semidefinite programming for ad hoc wireless sensor network
  localization.
\newblock In \textit{Proceedings of the 3rd international symposium on
  Information processing in sensor networks}. ACM.

\bibitem[{Borg and Groenen(2005)}]{BorGro05}
\textsc{Borg, I.} and \textsc{Groenen, P.~J.} (2005).
\newblock \textit{Modern multidimensional scaling: Theory and applications}.
\newblock Springer Science \& Business Media.

\bibitem[{Cai and Zhang(2016)}]{CaiZha16}
\textsc{Cai, T.~T.} and \textsc{Zhang, A.} (2016).
\newblock Rate-optimal perturbation bounds for singular subspaces with
  applications to high-dimensional statistics.
\newblock \textit{arXiv preprint arXiv:1605.00353} .

\bibitem[{Cand{\`e}s et~al.(2011)Cand{\`e}s, Li, Ma and Wright}]{CanLiMaWri11}
\textsc{Cand{\`e}s, E.~J.}, \textsc{Li, X.}, \textsc{Ma, Y.} and
  \textsc{Wright, J.} (2011).
\newblock Robust principal component analysis?
\newblock \textit{Journal of the ACM (JACM)} \textbf{58} 11.

\bibitem[{Catoni(2012)}]{Cat12}
\textsc{Catoni, O.} (2012).
\newblock Challenging the empirical mean and empirical variance: a deviation
  study.
\newblock In \textit{Annales de l'Institut Henri Poincar{\'e}, Probabilit{\'e}s
  et Statistiques}, vol.~48. Institut Henri Poincar{\'e}.

\bibitem[{Chamberlain and Rothschild(1982)}]{ChaRot82}
\textsc{Chamberlain, G.} and \textsc{Rothschild, M.} (1982).
\newblock Arbitrage, factor structure, and mean-variance analysis on large
  asset markets.

\bibitem[{Chandrasekaran et~al.(2011)Chandrasekaran, Sanghavi, Parrilo and
  Willsky}]{ChaSanParWil11}
\textsc{Chandrasekaran, V.}, \textsc{Sanghavi, S.}, \textsc{Parrilo, P.~A.} and
  \textsc{Willsky, A.~S.} (2011).
\newblock Rank-sparsity incoherence for matrix decomposition.
\newblock \textit{SIAM Journal on Optimization} \textbf{21} 572--596.

\bibitem[{Chen(2015)}]{Che15}
\textsc{Chen, Y.} (2015).
\newblock Incoherence-optimal matrix completion.
\newblock \textit{IEEE Transactions on Information Theory} \textbf{61}
  2909--2923.

\bibitem[{Choi and Marden(1998)}]{ChoMar98}
\textsc{Choi, K.} and \textsc{Marden, J.} (1998).
\newblock A multivariate version of kendall's $\tau$.
\newblock \textit{Journal of Nonparametric Statistics} \textbf{9} 261--293.

\bibitem[{Davis and Kahan(1970)}]{DavKah70}
\textsc{Davis, C.} and \textsc{Kahan, W.~M.} (1970).
\newblock The rotation of eigenvectors by a perturbation. iii.
\newblock \textit{SIAM Journal on Numerical Analysis} \textbf{7} 1--46.

\bibitem[{De~La~Torre and Black(2003)}]{DeBla03}
\textsc{De~La~Torre, F.} and \textsc{Black, M.~J.} (2003).
\newblock A framework for robust subspace learning.
\newblock \textit{International Journal of Computer Vision} \textbf{54}
  117--142.

\bibitem[{Doherty et~al.(2001)Doherty, Pister and El~Ghaoui}]{DohPisSJGha01}
\textsc{Doherty, L.}, \textsc{Pister, K.~S.} and \textsc{El~Ghaoui, L.} (2001).
\newblock Convex position estimation in wireless sensor networks.
\newblock In \textit{INFOCOM 2001. Twentieth Annual Joint Conference of the
  IEEE Computer and Communications Societies. Proceedings. IEEE}, vol.~3. IEEE.

\bibitem[{Fama and French(1993)}]{FamFre93}
\textsc{Fama, E.~F.} and \textsc{French, K.~R.} (1993).
\newblock Common risk factors in the returns on stocks and bonds.
\newblock \textit{Journal of Financial Economics} \textbf{33} 3--56.

\bibitem[{Fan et~al.(2008)Fan, Fan and Lv}]{FanFanLv08}
\textsc{Fan, J.}, \textsc{Fan, Y.} and \textsc{Lv, J.} (2008).
\newblock High dimensional covariance matrix estimation using a factor model.
\newblock \textit{Journal of Econometrics} \textbf{147} 186--197.

\bibitem[{Fan et~al.(2017)Fan, Li and Wang}]{FanLiWan16}
\textsc{Fan, J.}, \textsc{Li, Q.} and \textsc{Wang, Y.} (2017).
\newblock Robust estimation of high-dimensional mean regression.
\newblock \textit{Journal of Royal Statistical Society, Series B} \textbf{79}
  247--265.

\bibitem[{Fan et~al.(2013)Fan, Liao and Mincheva}]{FanLiaMin13}
\textsc{Fan, J.}, \textsc{Liao, Y.} and \textsc{Mincheva, M.} (2013).
\newblock Large covariance estimation by thresholding principal orthogonal
  complements.
\newblock \textit{Journal of the Royal Statistical Society: Series B}
  \textbf{75} 1--44.

\bibitem[{Fan et~al.(2017+)Fan, Liu and Wang}]{FanLiuWan17}
\textsc{Fan, J.}, \textsc{Liu, H.} and \textsc{Wang, W.} (2017+).
\newblock Large covariance estimation through elliptical factor models.
\newblock \textit{Annals of Statitistics}  to appear.

\bibitem[{Fan et~al.(2017)Fan, Wang and Zhong}]{FanWanZho17}
\textsc{Fan, J.}, \textsc{Wang, W.} and \textsc{Zhong, Y.} (2017).
\newblock Robust covariance estimation for approximate factor models.
\newblock \textit{Journal of Econometrics} .

\bibitem[{Fang et~al.(1990)Fang, Kotz and Ng}]{FanKotNg90}
\textsc{Fang, K.-T.}, \textsc{Kotz, S.} and \textsc{Ng, K.~W.} (1990).
\newblock \textit{Symmetric multivariate and related distributions}.
\newblock Chapman and Hall.

\bibitem[{Gupta et~al.(2013)Gupta, Varga and Bodnar}]{GupVarBod13}
\textsc{Gupta, A.~K.}, \textsc{Varga, T.} and \textsc{Bodnar, T.} (2013).
\newblock \textit{Elliptically contoured models in statistics and portfolio
  theory}.
\newblock Springer.

\bibitem[{Han and Liu(2014)}]{HanLiu14}
\textsc{Han, F.} and \textsc{Liu, H.} (2014).
\newblock Scale-invariant sparse {PCA} on high-dimensional meta-elliptical
  data.
\newblock \textit{Journal of the American Statistical Association} \textbf{109}
  275--287.

\bibitem[{Hsu and Sabato(2014)}]{HsuSab14}
\textsc{Hsu, D.} and \textsc{Sabato, S.} (2014).
\newblock Heavy-tailed regression with a generalized median-of-means.
\newblock In \textit{Proceedings of the 31st International Conference on
  Machine Learning (ICML-14)}.

\bibitem[{Hu et~al.(2004)Hu, Tan, Wang and Maybank}]{HuTanWanMay04}
\textsc{Hu, W.}, \textsc{Tan, T.}, \textsc{Wang, L.} and \textsc{Maybank, S.}
  (2004).
\newblock A survey on visual surveillance of object motion and behaviors.
\newblock \textit{Systems, Man, and Cybernetics, Part C: IEEE Transactions on
  Applications and Reviews} \textbf{34} 334--352.

\bibitem[{Huber(1964)}]{Hub64}
\textsc{Huber, P.~J.} (1964).
\newblock Robust estimation of a location parameter.
\newblock \textit{The Annals of Mathematical Statistics} \textbf{35} 73--101.

\bibitem[{Johnstone and Lu(2009)}]{JohLu09}
\textsc{Johnstone, I.~M.} and \textsc{Lu, A.~Y.} (2009).
\newblock On consistency and sparsity for principal components analysis in high
  dimensions.
\newblock \textit{Journal of the American Statistical Association} \textbf{104}
  682--693.

\bibitem[{Jolliffe(2002)}]{Jol02}
\textsc{Jolliffe, I.} (2002).
\newblock \textit{Principal component analysis}.
\newblock Wiley Online Library.

\bibitem[{Ma(2013)}]{Ma13}
\textsc{Ma, Z.} (2013).
\newblock Sparse principal component analysis and iterative thresholding.
\newblock \textit{The Annals of Statistics} \textbf{41} 772--801.

\bibitem[{Mirsky(1960)}]{Mir60}
\textsc{Mirsky, L.} (1960).
\newblock Symmetric gauge functions and unitarily invariant norms.
\newblock \textit{The quarterly journal of mathematics} \textbf{11} 50--59.

\bibitem[{Ng et~al.(2002)Ng, Jordan, Weiss et~al.}]{Ng02}
\textsc{Ng, A.~Y.}, \textsc{Jordan, M.~I.}, \textsc{Weiss, Y.} \textsc{et~al.}
  (2002).
\newblock On spectral clustering: Analysis and an algorithm.
\newblock \textit{Advances in neural information processing systems} \textbf{2}
  849--856.

\bibitem[{Oliver et~al.(2000)Oliver, Rosario and Pentland}]{OliRosPen00}
\textsc{Oliver, N.~M.}, \textsc{Rosario, B.} and \textsc{Pentland, A.~P.}
  (2000).
\newblock A bayesian computer vision system for modeling human interactions.
\newblock \textit{IEEE Transactions on Pattern Analysis and Machine
  Intelligence} \textbf{22} 831--843.

\bibitem[{O'Rourke et~al.(2013)O'Rourke, Vu and Wang}]{ORoVuKe13}
\textsc{O'Rourke, S.}, \textsc{Vu, V.} and \textsc{Wang, K.} (2013).
\newblock Random perturbation of low rank matrices: Improving classical bounds.
\newblock \textit{arXiv preprint arXiv:1311.2657} .

\bibitem[{Paulsen(2002)}]{Pau02}
\textsc{Paulsen, V.} (2002).
\newblock \textit{Completely bounded maps and operator algebras}, vol.~78.
\newblock Cambridge University Press.

\bibitem[{Rennie and Srebro(2005)}]{RenSre05}
\textsc{Rennie, J.~D.} and \textsc{Srebro, N.} (2005).
\newblock Fast maximum margin matrix factorization for collaborative
  prediction.
\newblock In \textit{Proceedings of the 22nd international conference on
  Machine learning}. ACM.

\bibitem[{Rohe et~al.(2011)Rohe, Chatterjee and Yu}]{Roh11}
\textsc{Rohe, K.}, \textsc{Chatterjee, S.} and \textsc{Yu, B.} (2011).
\newblock Spectral clustering and the high-dimensional stochastic blockmodel.
\newblock \textit{The Annals of Statistics}  1878--1915.

\bibitem[{Rothman et~al.(2009)Rothman, Levina and Zhu}]{RotLevZhu09}
\textsc{Rothman, A.~J.}, \textsc{Levina, E.} and \textsc{Zhu, J.} (2009).
\newblock Generalized thresholding of large covariance matrices.
\newblock \textit{Journal of the American Statistical Association} \textbf{104}
  177--186.

\bibitem[{Stewart(1990)}]{Ste90}
\textsc{Stewart, G.~W.} (1990).
\newblock Matrix perturbation theory .

\bibitem[{Stock and Watson(2002)}]{StoWat02}
\textsc{Stock, J.} and \textsc{Watson, M.} (2002).
\newblock Forecasting using principal components from a large number of
  predictors.
\newblock \textit{Journal of the American Statistical Association} \textbf{97}
  1167--1179.

\bibitem[{Tropp(2012)}]{Tro12}
\textsc{Tropp, J.~A.} (2012).
\newblock User-friendly tail bounds for sums of random matrices.
\newblock \textit{Foundations of Computational Mathematics} \textbf{12}
  389--434.

\bibitem[{Vershynin(2010)}]{Ver10}
\textsc{Vershynin, R.} (2010).
\newblock Introduction to the non-asymptotic analysis of random matrices.
\newblock \textit{arXiv preprint arXiv:1011.3027} .

\bibitem[{Vu(2011)}]{Vu11}
\textsc{Vu, V.} (2011).
\newblock Singular vectors under random perturbation.
\newblock \textit{Random Structures \& Algorithms} \textbf{39} 526--538.

\bibitem[{Vu and Lei(2012)}]{VuLei12}
\textsc{Vu, V.~Q.} and \textsc{Lei, J.} (2012).
\newblock Minimax rates of estimation for sparse pca in high dimensions.
\newblock \textit{arXiv preprint arXiv:1202.0786} .

\bibitem[{Wang and Fan(2017+)}]{WanFan17}
\textsc{Wang, W.} and \textsc{Fan, J.} (2017+).
\newblock Asymptotics of empirical eigen-structure for ultra-high dimensional
  spiked covariance model.
\newblock \textit{Annals of Statitistics}  to appear.

\bibitem[{Wedin(1972)}]{Wed72}
\textsc{Wedin, P.-{\AA}.} (1972).
\newblock Perturbation bounds in connection with singular value decomposition.
\newblock \textit{BIT Numerical Mathematics} \textbf{12} 99--111.

\bibitem[{Yu et~al.(2015)Yu, Wang and Samworth}]{YuWanSam15}
\textsc{Yu, Y.}, \textsc{Wang, T.} and \textsc{Samworth, R.~J.} (2015).
\newblock A useful variant of the davis--kahan theorem for statisticians.
\newblock \textit{Biometrika} \textbf{102} 315--323.

\bibitem[{Yuan et~al.(2007)Yuan, Ekici, Lu and Monteiro}]{YuaEkiLuMon07}
\textsc{Yuan, M.}, \textsc{Ekici, A.}, \textsc{Lu, Z.} and \textsc{Monteiro,
  R.} (2007).
\newblock Dimension reduction and coefficient estimation in multivariate linear
  regression.
\newblock \textit{Journal of the Royal Statistical Society: Series B
  (Statistical Methodology)} \textbf{69} 329--346.

\end{thebibliography}

\end{document}